 \newlength{\baseunit}               
\numberwithin{equation}{section}
\newtheorem{theorem}[equation]{Theorem}
\newtheorem{prop}[equation]{Proposition}
\newtheorem{lm}[equation]{Lemma}
\newtheorem{cor}[equation]{Corollary}
\theoremstyle{definition}
\newtheorem{definition}[equation]{Definition}
\newtheorem{remark}[equation]{Remark}
\newtheorem{examplesremarks}[equation]{Examples and Remarks}
\DeclareMathOperator{\Hom}{Hom}
\DeclareMathOperator{\Mor}{Mor}
\DeclareMathOperator{\im}{im}
\DeclareMathOperator{\coker}{coker}
\DeclareMathOperator{\Ind}{Ind}
\DeclareMathOperator{\Pro}{Pro}
\DeclareMathOperator{\pre}{pre}
\DeclareMathOperator{\IP}{IP}
\DeclareMathOperator{\IPs}{IP^s}
\DeclareMathOperator{\PrI}{PI}
\DeclareMathOperator{\id}{id}
\DeclareMathOperator{\Fun}{Fun}
\DeclareMathOperator{\BiFun}{Fun}
\newcommand{\fset}{{\bf Set}_0}
\newcommand{\set}{{\bf Set}}
\newcommand{\Zee} {\mathbb{Z}}
\newcommand{\C}{\mathcal C}
\newcommand{\Func}{\Fun(\Zee_+,\C)}
\newcommand{\Funct}{\Fun'(\Zee_+,\C)}
\newcommand{\Bifunc}{\BiFun(\Pi, \C)}
\newcommand{\Bifuncl}{\BiFun(\Lambda, \C)}
\newcommand{\Funcop}{\Fun(\Zee^{op}_+,\C)}
\newcommand{\Funcopt}{\Fun'(\Zee^{op}_+,\C)}
\newcommand{\Funcneg}{\Fun(\Zee_-,\C)}
\newcommand{\Funcnegt}{\Fun'(\Zee_-,\C)}
\newcommand{\rar}{\rightarrow}
\newcommand{\lar}{\leftarrow}
\newcommand{\lrar}{\longrightarrow}
\newcommand{\hrar}{\hookrightarrow}
\newcommand{\epi}{\twoheadrightarrow}
\newcommand{\limi}{\underset{i\in I}{``\varinjlim"}}
\newcommand{\limj}{\underset{j}{\varinjlim}}
\newcommand{\limproi}{\underset{i\in I}{``\varprojlim"}}
\newcommand{\limXi}{\underset{i\in I}{``\varinjlim"} X_i}
\newcommand{\limXZ}{\underset{i\in \Zee_+}{``\varinjlim"} X_i}
\newcommand{\limXphi}{\underset{i\in \Zee_+}{``\varinjlim"} X_{\phi(i)}}
\newcommand{\limXpsi}{\underset{i\in \Zee_+}{``\varinjlim"} X_{\psi(i)}}
\newcommand{\limXii}{\underset{i'\in I'}{``\varinjlim"} X_{i'}}
\newcommand{\limXj}{\underset{j\in J}{``\varinjlim"} X_j}
\newcommand{\limYi}{\underset{i\in I}{``\varinjlim"} Y_i}
\newcommand{\limYj}{``\varinjlim_{j\in J}" Y_j}
\newcommand{\limYij}{\underset{j}{``\varinjlim"}\underset{i}{``\varprojlim"}Y_{i,j}}
\newcommand{\limXij}{\underset{j}{``\varinjlim"}\underset{i}{``\varprojlim"}X_{i,j}}
\newcommand{\limsigma}{\underset{j}{``\varinjlim"}\underset{i\in\Sigma_j}{``\varprojlim"}X_{i,j}}
\newcommand{\limpi}{\underset{j}{``\varinjlim"}\underset{i\in\Pi_j}{``\varprojlim"}X_{i,j}}
\newcommand{\proYj}{\underset{j\in J}{``\varprojlim"} Y_j}
\newcommand{\proVvj}{\underset{j\in J}{``\varprojlim"} V'_j}
\newcommand{\proXi}{\underset{i\in I}{``\varprojlim"}X_i}
\newcommand{\proXii}{\underset{i'\in I'}{``\varprojlim"}X_{i'}}
\newcommand{\proindrigaa}{$\underset{j\in J}{\varinjlim}\underset{i\in I}{\varprojlim}\ X_{i,j}$}
\newcommand{\indhom}{\underset{i\in I}{\varprojlim}\underset{j\in J}{\varinjlim}\ Hom_{\C}(X_{i}, Y_{j})}
\newcommand{\prohom}{\underset{j\in J}{\varprojlim}\underset{i\in I}{\varinjlim}Hom_{\C}(X_{i}, Y_{j})}
\newcommand{\limcom}{\underset{k}{\varprojlim}\underset{j}{\varinjlim}Hom_{\C}(Y_{j}, Z_{k})}
\newcommand{\limcoma}{\underset{k}{\varprojlim}\underset{j}{\varinjlim}\underset{\alpha}{\varprojlim}Hom_{\C}(Y_{\alpha j}, Z_{k})}
\newcommand{\limcomak}{\underset{k}{\varprojlim}\underset{\alpha}{\varprojlim}\underset{j}{\varinjlim}Hom_{C}(Y_{\alpha j}, Z_{k})}
\newcommand{\limcomjka}{\underset{\alpha}{\varprojlim}\underset{k}{\varprojlim}\underset{j}{\varinjlim}Hom_{C}(Y_{\alpha j}, Z_{k})}
\newcommand{\indprorigaa}{\underset{j}{``\varinjlim"}``\underset{i}{\varprojlim"}X_{i,j}}
\newcommand{\AP}{\BiFun(\Pi, \A)}
\newcommand{\APa}{\BiFun^a(\Pi, \A)}
\newcommand{\EP}{\E(\Pi, \A)}
\newcommand{\limA}{\underleftrightarrow{\lim \ }\mathcal A}
\newcommand{\prelim}{\pre\underleftrightarrow{\lim \ }}
\newcommand{\xrar}{\xrightarrow}
\newcommand{\xlar}{\xleftarrow}
\newcommand{\A}{\mathcal A}
\newcommand{\B}{\mathcal B}
\newcommand{\E}{\mathcal E}
\newcommand{\F}{\mathcal F}
\newcommand{\Pp}{\mathcal P}
\newcommand{\vect}{{\bf Vect}_0(k)}
\newcommand{\vects}{{\bf Vect}(k)}
\newcommand{\limiti}{\underset{i}{\varinjlim}}
\newcommand{\liminvj}{\underset{j}{``\varprojlim"}}
\newcommand{\limitinvj}{\underset{j}{\varprojlim}}
\newcommand{\limij}{\underset{i}{``\varinjlim"}\underset{j}{``\varprojlim"}}
\newcommand{\limitij}{\underset{i}{\varinjlim}\ \underset{j}{\varprojlim}}
\newcommand{\proVj}{\underset{j\in J}{``\varprojlim"}V_j}
\newcommand{\dlim}{\underleftrightarrow{\lim}}
\newcommand{\T}{\mathcal T}
\newcommand{\Ll}{\mathcal L}
\begin{document}
\pagestyle{plain} 
\title{Locally Compact Objects in Exact Categories}
\author{Luigi Previdi}
\date{}
\maketitle
\begin{abstract}
We identify two categories of locally compact objects on an exact category $\A$. They correspond to the well-known constructions of the Beilinson category $\limA$ and the Kato category $\kappa(\A)$. We study their mutual relations and compare the two constructions. We prove that $\limA$ is an exact category, which gives to this category a very convenient feature when dealing with K-theoretical invariants, and study the exact structure of the category $\dlim\vect$ of Tate spaces. It is natural therefore to consider the Beilinson category $\limA$ as the  most convenient candidate to the role of the category of locally compact objects over an exact category. We also show that the categories $\Ind_{\aleph_0}(\C)$, $\Pro_{\aleph_0}(\C)$ of countably indexed ind/pro-objects over any category $\C$ can be described as  localizations of categories of diagrams over $\C$.
\end{abstract} 

\tableofcontents

\section{Introduction}
When dealing with the categorical formulation of some infinite-dimensional problems arising from different contexts in Analysis, Topology, Algebraic Geometry and Algebra, it is often natural to use the formalism of ind-pro objects of a certain category. This formalism was introduced by Grothendieck and his school in the early '60 (see \cite{sga}) and provided a general framework to address many questions arising in Algebraic Geometry. In the '80s, K. Kato took a further step, and considered iterated categories of ind/pro objects, and was able to express topological concepts in a more general and convenient context than that of a topological space (\cite{k}). In particular, it is a theorem of Kato that locally linearly compact topological vector spaces are just particular ind-pro objects over the category of finite dimensional vector spaces. In a similar vein, Kapranov has proved that totally disconnected locally compact topological spaces can be constructed also as a category of ind-pro objects of the category of finite sets  (see Theorem \eqref{loc}).

\vspace{0.3cm}

The question of finding an appropriate category-theoretic concept for the general concept of a locally compact space, arising from different research areas was addressed in the same period of time by A. Beilinson, in his paper \cite{B}. Precisely, Beilinson deals with the problem of characterizing ``local compactness" over an exact category, while Kato considers general categories. Beilinson's approach can thus be interpreted as the linear point of view about local compactness, while Kato's  construction in the years has proved particularly fruitful when dealing with analytical problems, for it was his construction that allowed M. Kapranov to address the basic problem of  creating a convenient framework for Harmonic Analysis over a 2-dimensional local field, overcoming analytical difficulties that appeared insurmountable, in \cite{ka}.

\vspace{0.3cm}

More recently,  three papers have appeared (\cite{op}, \cite{o} and \cite{ak})  that deal, in different contexts, with the category of locally compact objects of an exact category. In particular, in the first paper the authors use the language and the techinique of iterated ind/pro objects to describe familiar spaces of analytical functions and distributions as particular iterated ind/pro objects over an exact category (such as that of finite-dimensional vector spaces). In the second paper, the author introduces some class of infinite-dimensional vector spaces, the $C_n$-spaces, whose construction is very close to the iterated categories $\limA$; and in the last paper the authors use explicitly  the category of locally compact objects of an exact category to define the concept of $n$-Tate spaces.  It thus seems important to give a systematic treatment of local compactness in a category, and to compare the different definitions of local compactness thus far proposed when the
 ambient category is exact.

\vspace{0.3cm}

The aim of this work is to clarify the mutual relation between the two construction of locally compact objects, ``\`a la Kato" and ``\`a la Beilinson", at least when the base category $\A$ is an exact category. We give, in Theorem  \eqref{charac} a precise statement characterizing the Beilinson category $\limA$ in terms of the Kato category $\kappa(\A)$. We also prove the exactness of all the categories $\Ind(\A), \  \Pro(\A)$ and all the iterated categories $\Ind\Pro(\A), \  \Pro\Ind(\A)$, ..., and finally prove the exactness of $\limA$ using the technique of iterated ind/pro-objects, by showing the existence of a closed embedding of $\limA$ into $\Ind\Pro(\A)$. As a byproduct of this study, we have also found an alternative definition of the categories $\Ind_{\aleph_0}(\C)$, $\Pro_{\aleph_0}(\C)$, of countably indexed ind/pro-objects of any category $\C$. It turns out that they can be described as certain localizations of appropriately defined categories of diagrams $X_0\rar X_1\rar X_2\rar\cdots$ (resp., $X_0\lar X_1\lar X_2\lar\cdots$). The localization follows the pattern suggested by the approach of Beilinson, but it is applicable to all categories. The exactness of $\limA$ using the language of ind/pro objects makes this category the ``ideal candidate" to the role of the ``category of locally compact objects of an exact category". This allows one to study $\limA$ from the point of view of   algebraic K-theory. Partial results in this direction are obtained in the  forthcoming paper \cite{p}.

\vspace{0.3cm} 

An important example of an exact category is the category $\A=\Pp(R)$, 
the category of finitely generated projective modules over a ring $R$. In this case, an alternative approach to local compactness has been developed by V. 
Drinfeld in his paper \cite{dr}, where the notion of {\it Tate R-module} is defined. When $R$ is commutative, the concept of Tate $R$-module is understood as ``family of Tate spaces", and it appears that, for $R=k$ a 
field, Drinfeld's construction reduces to our $\dlim\vect$.

\vspace{0.3cm}

{\bf Acknowledgements.} This paper is part of the dissertation presented by the author to the Faculty of the Graduate School of Yale University in partial fulfillment of the requirements for the Degree of Doctor of Philosophy in Mathematics. The author wishes to thank his advisor, Professor Mikhail Kapranov,  for his assistance, and Professor Alexander Beilinson, for his remarks on several topics here discussed. The author also would like to express his gratitude to Professors Howard Garland, Igor Frenkel and Gregg Zuckerman, for the many discussions had with them concerning the results here expounded.

\section{Preliminary facts about Ind/Pro categories}

\subsection{Generalities}

Let $\C$ be a category. An {\it inductive system} over $\C$ \cite{sga} is a covariant functor
\[
X\colon I\to\C
\]
from a small filtering category $I$ to $\C$. We shall also use the notation 
$\{X_i\}_{i\in I}$. We shall refer to the objects $X_i$ as the {\it components} of $X$. If $u: i\rar i'$ is a morphism of $I$, we shall call the induced morphism $X(u): X(i)\lrar X(i')$ a {\it structure morphism} of the ind-system $X$. 

\vspace{0.1cm}

The {\it ind-object} associated to an ind-system $\{X_i\}$ is the formal symbol
$$
X=\limXi.
$$
Ind-objects of $\C$ are made into a category, by putting:
\begin{equation}\label{indhom}
\Hom_{\Ind(C)}(\limXi, \limYj):=\indhom .
\end{equation}
Let us consider the datum consisting, for each $i\in I$, of a choice of a $j=j(i)\in J$ and a  morphism of $\C$, $f^i_j: X_i\lrar Y_j$ compatible with the structure maps of the two ind-objects $X$ and $Y$. A morphism of 
ind-objects $f\colon X\to Y$ is thus an equivalence class of such data, 
under the equivalence relation induced by forming the limit as in 
\eqref{indhom}.
The maps $f^i_j$ shall be called the {\it components} of $f$. Notice that such 
components are not uniquely determined by $f$. In particular, when $I=J$ and $j(i)=i$, the morphism $f$ is a natural transformation and it is called a {\it straight} morphism.

\par

The composition of two morphisms of $\Ind(\C)$ is defined by composing the components in the obvious way. In this way the collection of all ind-objects of $\C$ with their morphisms becomes a category $\Ind(\C)$.

\par

\vspace{0.3cm}

The category $\Pro(\C)$ is formally defined as
$$
\Pro(\C):=\Ind(\C^{op})^{op}.
$$
Its objects are formal symbols 
$$
Y=\proYj 
$$
for contravariant functors
$$
Y:J^{op}\lrar\C
$$
from a small filtering category $J$ to $\C$. We shall call these objects {\it pro-objects} or {\it projective systems} over $\C$.

\vspace{0.2cm}

Let be $X=\{X_i\}_{i\in I}$ and $Y=\{Y_j\}_{j\in J}$ two pro-objects. Dualizing \eqref{indhom} we see that the class of {\it morphisms of pro-objects} is then given as

\begin{equation}\label{prohom}
\Hom_{\Pro(\C)}(\proXi, \proYj)=\prohom.
\end{equation}
Thus, given the datum consisting, for all $j\in J$, of an object $i=i(j)\in I$ and a morphism of $\C$, $f^i_j: X_i\lrar Y_j$ compatible with the structure maps of the two pro-objects $X$ and $Y$, a morphism $f$ of pro-objects $X=\proXi$ and $Y=\proYj$ is an equivalence class of such data, under the 
equivalence relation induced by forming the limit as in \eqref{prohom}. \\
Straight morphisms are defined in the same way as for $\Ind(C)$. The composition is still defined componentwise. For further details, we refer to \cite{sga}, Expos\'e 1, or \cite{am}, Appendix.

\par

We now introduce some further terminology.

\begin{definition}\label{indstrict} An ind-object $X$ is called {\it strict} if it can be represented by $X=\limXi$ where the structure morphisms are monomorphisms. Similarly, a pro-object $Y$ is called {\it strict} if it can be represented by $Y=\proYj$, where the structure morphisms are epimorphisms. 

\par

We denote by $\Ind^s(\C)$ (resp., $\Pro^s(\C)$) the full subcategory of $\Ind(\C)$ (resp., $\Pro(\C)$) whose objects are strict ind-objects (resp., strict pro-objects). 
\end{definition}

\begin{definition} $\Ind_{\aleph_0}(\C)$, $\Pro_{\aleph_0}(\C)$ are the categories of {\it countable ind-objects} (resp., pro-objects) of $\C$, i.e. those ind-objects (pro-objects) obtained as ind-limits (pro-limits) from a countable filtering category.
\end{definition}

\vspace{0.3cm}

Let be $\set$ the category of sets;  $\vects$ the category of vector spaces over a field $k$, $\fset$  the category of finite sets and $\vect$ the category of finite-dimensional vector spaces over a field $k$. Then, the proof of the following is elementary:

\begin{prop} There are the following equivalences of categories:
$$
\Ind(\fset)\cong\set;
$$
$$
\Ind(\vect)\cong\vects
$$
\end{prop}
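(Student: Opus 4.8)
The plan is to realize, in each case, the genuine colimit functor as the desired equivalence. I will treat the two statements in parallel, writing $\C$ for the source ($\fset$ or $\vect$) and $\mathcal D$ for the corresponding target ($\set$ or $\vects$). Since $\mathcal D$ admits all small filtered colimits, I can define
$$
L\colon \Ind(\C)\lrar\mathcal D,\qquad L\left(\limXi\right):=\varinjlim_{i\in I} X_i,
$$
where the colimit on the right is now computed honestly in $\mathcal D$. On a morphism given by components $f^i_j$, the maps $X_i\to Y_{j(i)}\to\varinjlim_j Y_j$ form a cone (by compatibility with the structure maps) and hence induce a map of colimits; one checks this is independent of the chosen representative, so that $L$ is well defined.

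The key point is that each component $X_i$ is a \emph{compact} (finitely presentable) object of $\mathcal D$: a finite set, respectively a finite-dimensional $k$-vector space, has the property that $\Hom_{\mathcal D}(X_i,-)$ commutes with filtered colimits. Granting this, full faithfulness of $L$ is a formal computation: since $\Hom_{\mathcal D}(-,Z)$ turns colimits into limits and $X_i$ is compact,
$$
\Hom_{\mathcal D}\left(\varinjlim_i X_i,\ \varinjlim_j Y_j\right)
=\varprojlim_i \Hom_{\mathcal D}\left(X_i,\varinjlim_j Y_j\right)
=\varprojlim_i\varinjlim_j \Hom_{\mathcal D}(X_i,Y_j).
$$
The right-hand side is exactly the defining expression \eqref{indhom} for $\Hom_{\Ind(\C)}(\limXi,\limYj)$, and unwinding the identifications shows that $L$ realizes this very isomorphism on Hom-sets. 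Hence $L$ is fully faithful.

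For essential surjectivity I will use that every object of $\mathcal D$ is the directed union of its finite subobjects. Given a set $S$, the finite subsets of $S$ ordered by inclusion form a filtering poset (two finite subsets are dominated by their union), with $S=\varinjlim_{F\subseteq S,\,|F|<\infty} F$; the inclusions are monomorphisms, so $S\cong L$ of the corresponding strict ind-object in the sense of Definition \ref{indstrict}. Likewise, for a $k$-vector space $V$ the finite-dimensional subspaces form a filtering poset (two of them lie in their sum) with $V=\varinjlim_{W\subseteq V,\,\dim W<\infty} W$. Thus $L$ is essentially surjective, and therefore an equivalence.

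The only substantive step is the compactness claim, which I expect to be the heart of the matter. For $\fset$ it reduces to the elementary fact that a map from a finite set into a filtered colimit factors through a single stage: each of the finitely many elements of $X_i$ is represented at some stage, and by filteredness one common stage can be chosen; the same argument applied to the finitely many relations $f(x)=g(x)$ gives that two maps agreeing in the colimit already agree at a later stage, so that $\Hom_{\mathcal D}(X_i,\varinjlim_j Y_j)=\varinjlim_j\Hom_{\mathcal D}(X_i,Y_j)$. For $\vect$ the identical argument applies to a finite basis of $X_i$. Everything else is the standard unravelling of the $\Ind$-Hom formula.
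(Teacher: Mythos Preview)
Your argument is correct: it is the standard proof via compactness of finite sets (resp.\ finite-dimensional spaces) in $\set$ (resp.\ $\vects$), and each step is sound. The paper itself does not supply a proof, merely declaring the statement elementary, so there is nothing to compare; your write-up fills in exactly the expected details.
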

Given two filtering cateogries $I$ and $J$, a {\it cofinal functor} $\phi: I\rar J$ is a functor satisfying  the following conditions: (1) for all objects $j\in J$, there is an object $i\in I$ for which $\Hom(j, \phi(i))\neq\emptyset$ and (2) for all object $i\in I$ and for each pair of morphisms $f,g: j\rar \phi(i)$ in $J$, there exists a morphism $h:i\rar i'$ in $I$ such that $\phi(h)f=\phi(h)g$.

\vspace{0.2cm} 

A {\it cofinal subcategory} of a filtering category $I$ is a full subcategory 
$I'$ of $I$, such that the embedding is a cofinal functor.
\vspace{0.1cm}
The following will be often useful:

\begin{lm}\label{cofinal}
Let I' be a cofinal subcategory of I. Then, in $\Ind(\C)$ we have $\limXi=\limXii$ and in $\Pro(\C)$ we have 
$\proXi=\proXii$.
\end{lm}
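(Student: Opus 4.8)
The plan is to prove Lemma \ref{cofinal}, which asserts that passing to a cofinal subcategory $I'\subseteq I$ does not change the associated ind-object (and dually the pro-object). My strategy is to establish the ind-case directly from the definition of $\Hom_{\Ind(\C)}$ in \eqref{indhom}, and then obtain the pro-case for free by the duality $\Pro(\C)=\Ind(\C^{op})^{op}$ together with the observation that a cofinal subcategory of $I$ is cofinal in $I^{op}$ in the appropriate dual sense.

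First I would set up the two ind-objects to be compared, namely $\limXi$ built over all of $I$ and $\underset{i'\in I'}{``\varinjlim"} X_{i'}$ built over the restriction of the functor $X$ to $I'$. For any test ind-object $\limYj$, I want to show that the two $\Hom$-sets computed via \eqref{indhom} agree, naturally in $Y$; by Yoneda this forces the two ind-objects to be isomorphic. Concretely, the claim reduces to showing that the canonical restriction map
\begin{equation*}
\underset{i\in I}{\varprojlim}\,\underset{j\in J}{\varinjlim}\,\Hom_{\C}(X_i,Y_j)\;\lrar\;\underset{i'\in I'}{\varprojlim}\,\underset{j\in J}{\varinjlim}\,\Hom_{\C}(X_{i'},Y_j)
\end{equation*}
is a bijection, where the right-hand projective limit is taken over the cofinal index set $I'$. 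This is exactly the statement that the projective limit of the functor $i\mapsto \varinjlim_j \Hom_{\C}(X_i,Y_j)$ is computed the same way over $I$ and over its cofinal subcategory $I'$.

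The two halves of the cofinality definition translate into the two halves of the bijection. Condition (1), that every $i\in I$ admits a morphism from some $\phi(i')=i'$ — more precisely, that $I'$ is ``large enough'' to detect all of $I$ — is what gives surjectivity: a compatible family of germs indexed by $I'$ extends uniquely to one indexed by all of $I$, because any $X_i$ maps (via a structure morphism landing in $I'$) to some $X_{i'}$, letting us transport the chosen component. Condition (2), the coequalizing condition on parallel morphisms $f,g\colon j\rar\phi(i)$, is precisely what ensures the extension is \emph{well-defined} and independent of the choices of connecting morphisms, hence gives injectivity: two families agreeing on $I'$ must agree on all of $I$ once we push them into the cofinal part. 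I would spell these out by chasing elements through the $\varprojlim$–$\varinjlim$ and invoking that $I$ (hence $I'$) is filtering so that any finite diagram of choices can be equalized.

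The main obstacle will be the bookkeeping in verifying that the element-wise construction respects the double limit, i.e. that a choice of components compatible over $I'$ really does assemble into a \emph{compatible} family over all of $I$ rather than merely a set-theoretic section; here the filtering hypothesis on $I$ and cofinality condition (2) must be used in tandem, and care is needed because the inner $\varinjlim_j$ identifies morphisms only up to passing to a larger $j\in J$. Once the ind-case is settled, the pro-case $\proXi=\proXii$ follows formally: a cofinal subcategory $I'\subseteq I$ remains a cofinal subcategory of $I^{op}$ (the definition of cofinal functor is self-dual in the relevant sense once one passes to opposite categories), so applying the already-proved ind-statement inside $\Ind(\C^{op})$ and dualizing via $\Pro(\C)=\Ind(\C^{op})^{op}$ yields the desired equality with no further work.
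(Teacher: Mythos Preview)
The paper does not give a proof of this lemma; it is stated as a standard fact (alongside the straightification lemma, for which the paper refers to Artin--Mazur \cite{am}). Your Yoneda approach, reducing to the bijectivity of the restriction map between the two $\varprojlim\varinjlim$ computations, is the standard and correct argument.

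Two small remarks on phrasing. First, in your discussion of cofinality condition~(1) you write that every $i\in I$ ``admits a morphism \emph{from} some $\phi(i')=i'$''; the paper's definition has the morphism going the other way, $\Hom(i,i')\neq\emptyset$, and this is in fact the direction you need to transport a germ along the structure maps of $X$. Second, your duality step is correct in spirit but slightly mis-stated: it is not that $I'$ is cofinal in $I^{op}$, but rather that a pro-object indexed by $I$ is literally an ind-object in $\C^{op}$ indexed by the \emph{same} filtering category $I$, so the ind-case applied verbatim to $\C^{op}$ gives the pro-case after dualizing via $\Pro(\C)=\Ind(\C^{op})^{op}$. Neither point affects the validity of your argument.
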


\begin{lm}\label{straight} (Straightification of morphisms) Let be $f: X\lrar Y$ a morphism in $\Ind(\C)$. Then it is possible to express $X$ and $Y$ as ind-systems $X=\limXi$ and $Y=\limYi$ with the same  category of indexes $I$, and $f$ as a natural transformation of functors. Similarly for morphisms of $\Pro(\C)$.
\end{lm}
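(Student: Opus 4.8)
The plan is to reindex both systems by a single filtering category manufactured from the components of $f$, arranged so that the chosen components assemble into an honest natural transformation; Lemma \ref{cofinal} then lets us identify the reindexed systems with $X$ and $Y$. Write $X=\{X_i\}_{i\in I}$ and $Y=\{Y_j\}_{j\in J}$. By \eqref{indhom}, $f$ is an element of $\varprojlim_i\varinjlim_j\Hom_{\C}(X_i,Y_j)$; denote by $f_i$ its $i$-th component. I would define a category $M$ whose objects are triples $(i,j,g)$ with $i\in I$, $j\in J$ and $g\colon X_i\to Y_j$ a morphism of $\C$ representing $f_i$ in $\varinjlim_j\Hom_{\C}(X_i,Y_j)$, and whose morphisms $(i,j,g)\to(i',j',g')$ are pairs $(u\colon i\to i',\,v\colon j\to j')$ of index morphisms making the square $Y(v)\,g=g'\,X(u)$ commute in $\C$. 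The two forgetful assignments give functors $p\colon M\to I$ and $q\colon M\to J$, and the very definition of the morphisms of $M$ says that $(i,j,g)\mapsto g$ is a natural transformation $\eta\colon X\circ p\Rightarrow Y\circ q$.

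The technical core is to show that $M$ is filtering and that $p,q$ are cofinal. Nonemptiness and upper bounds reduce to the filteredness of $I$ and $J$ together with the compatibility $f_i=f_{i'}\circ X(u)$ for $u\colon i\to i'$: given two objects I would first dominate their $I$-components by some $i$ in $I$, then use that the two induced representatives of $f_i$ agree in the colimit $\varinjlim_j\Hom_{\C}(X_i,Y_j)$ to find a common $J$-index and push the relevant $g$'s forward, and coequalizing a parallel pair of morphisms of $M$ is handled in the same spirit. For $q$, condition (1) of cofinality follows by pushing a representative forward along a morphism of $J$ (which does not change its colimit class), and condition (2) by taking $u=\id$ and $g'=Y(v)\,g$. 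For $p$, condition (1) is immediate; condition (2) is the one place where the compatibility of $f$ is genuinely used: given $u\colon i\to i'$ coequalizing a parallel pair in $I$, the identity $f_i=f_{i'}\circ X(u)$ produces, in the colimit, a $J$-index $j'$ and a representative $g'$ of $f_{i'}$ with $Y(v)\,g=g'\,X(u)$, which is exactly the datum of a morphism $(u,v)$ in $M$ lying over $u$.

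With filteredness and cofinality in hand, Lemma \ref{cofinal} gives isomorphisms $X\cong\{X_{p(m)}\}_{m\in M}$ and $Y\cong\{Y_{q(m)}\}_{m\in M}$ in $\Ind(\C)$, and under these identifications the class of the straight morphism $\eta$ is precisely $f$; relabelling $M$ as the common index category $I$ yields the statement. The assertion for $\Pro(\C)$ follows by applying the ind-case to $\C^{op}$ and dualizing, via $\Pro(\C)=\Ind(\C^{op})^{op}$. I expect the \emph{main obstacle} to be the verification that $M$ is filtering and that $p$ is cofinal, since both rest on careful bookkeeping with the colimit (equivalence-relation) description of the components of $f$ rather than on any single clean identity.
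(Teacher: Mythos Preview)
The paper does not give its own proof of this lemma but simply refers to \cite{am}. Your construction---the filtering category $M$ of triples $(i,j,g)$ with cofinal projections to $I$ and $J$, so that $\eta$ becomes a genuine natural transformation between the reindexed systems---is essentially the standard argument recorded there, and your outline of the filteredness of $M$ and the cofinality of $p$ and $q$ is correct.
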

For the proof of the above lemma, cf. \cite{am}.

\vspace{0.2cm}

From now on, we will consider only filtering categories which are partially ordered sets, called {\it preorders} in the sequel. This is not really restrictive 
since every filtering category $I$ admits a cofinal functor from a filtering preorder (cf. \cite{sga}, I.8.1). If $I$ and $J$ are such sets, then a functor $\phi\colon I\to J$ is a monotonic nondecreasing map. In particular, when $I=J=\Zee_+$, then $\phi:\Zee_+\rar\Zee_+$ is cofinal if and only if $\phi$ is monotonic, nondecreasing and $\lim_{n \to\infty}\phi = \infty$.

\subsection{Ind/Pro categories and localization of categories}

Let $\C$ be any category. In this section, motivated by the work of Beilinson (\cite{B}), we prove that the category  $\Ind_{\aleph_0}(\C)$ is the localization of the category of $\Zee_+$-indexed inductive systems on $\C$ modulo an 
equivalence relation, and similarly for $\Pro_{\aleph_0}(\C)$. We refer to \cite{gm}, Sect. III,2 for preliminary material about the localization of categories.

\vspace{0.3cm}

Let be $\Zee_+$ the preorder of nonnegative integers (considered as a category in the usual way). Let be $\phi: \Zee_+\rar\Zee_+$ a cofinal map. If $\psi$ is another such map, we write $\phi\leq \psi$ whenever, for all $i\in\Zee_+$, we have $\phi(i)\leq\psi(i)$.

\par

If $X$ is any object in the category of functors $\Func$, and $\phi\leq\psi$, we have a natural transformation of functors $X\cdot\phi\rar X\cdot\psi$ defined, for all $i$, by $X_{\phi(i)}\rar X_{\psi(i)}$.

\vspace{0.2cm}

For any pair of objects $X, Y$ of $\Func$, let us consider the equivalence 
relation $\sim$, which we will also denote $\sim_{X, Y}$, in $\Hom(X,Y)$ defined as follows: 
$f\sim_{X,Y} g$ if there exists an $i_0\in\Zee_+$ such that for all $i\geq i_0$ 
we have $f_i=g_i$. When $X$ and $Y$ are clear from the context, we write 
$f\sim g$. It is evident that this relation is compatible with the composition 
of morphisms.
We denote the quotient category $\dfrac{\Func}{\sim}$ by 
$\Funct$.

\vspace{0.2cm}

Next, consider the following class of morphisms in $\Funct$:
\begin{multline}
S:=\biggl\{Y\cdot\phi'\lrar Y\cdot\phi \text{ such that } Y\in\Funct; \ \phi, \phi': \Zee_+\rar\Zee_+ \text{ are cofinal, } \\  
 \text{ and }\phi'\leq\phi \biggr\}
\end{multline}
\begin{prop}\label{Slocal}
$S$ is a localizing system of morphisms in $\Funct$.
\end{prop}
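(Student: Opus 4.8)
The plan is to verify directly that $S$ satisfies the axioms of a localizing class in the sense of \cite{gm}, III.2: (i) $S$ contains all identities and is closed under composition; (ii) the two Ore (square-completion) conditions; and (iii) the cancellation condition relating $sf=sg$ to $ft=gt$ through members of $S$. Throughout I will write $c^{\phi'\phi}_Y\colon Y\cdot\phi'\to Y\cdot\phi$ (for $\phi'\le\phi$) for the canonical comparison morphism, whose component in degree $i$ is the structure map $Y_{\phi'(i)}\to Y_{\phi(i)}$. Note that objects of $\Funct$ are ordinary functors, so every morphism has an honest natural transformation as a representative, and two representatives are identified precisely when they agree for all large $i$; this is what lets me assert the required identities only up to eventual agreement.

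The single combinatorial tool underlying everything is that the cofinal maps $\Zee_+\to\Zee_+$ form a directed set under the pointwise order, together with the refinement statement: given cofinal $\phi,\phi'$ there is a cofinal nondecreasing $n$ with $n\ge\id$ and $\phi'(n(i))\ge\phi(i)$ for all $i$ (take $n(i)=\max(i,\min\{k:\phi'(k)\ge\phi(i)\})$, finite since $\phi'\to\infty$). Axiom (i) is then immediate: $\id_X=c^{\id\,\id}_X\in S$, and closure under composition reduces by transitivity of the structure maps, $c^{\phi\phi''}_Y\circ c^{\phi'\phi}_Y=c^{\phi'\phi''}_Y$, to the nested case.

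The heart of the proof is the Ore condition. Given $s=c^{\phi'\phi}_C\colon C\cdot\phi'\to C\cdot\phi$ in $S$ and an arbitrary $u\colon C\cdot\phi'\to Y$ in $\Funct$, I would build the completion as follows. Choose, by the refinement statement, a cofinal $n\ge\id$ with $\phi'(n(i))\ge\phi(i)$, set $Y'=Y\cdot n$ and let $t=c^{\id\,n}_Y\colon Y\to Y'$, which lies in $S$. Define $v\colon C\cdot\phi\to Y'$ degreewise as the composite $v_i\colon C_{\phi(i)}\to C_{\phi'(n(i))}\xrightarrow{u_{n(i)}}Y_{n(i)}$, the first arrow being the structure map (available since $\phi(i)\le\phi'(n(i))$). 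The identity $vs=tu$ then holds for all large $i$: composing the first arrow of $v_i$ with $s_i$ gives the structure map $C_{\phi'(i)}\to C_{\phi'(n(i))}$, and naturality of $u$ against the structure map $(C\cdot\phi')_i\to(C\cdot\phi')_{n(i)}$ converts $u_{n(i)}\circ(\text{structure})$ into $(\text{str }Y_i\to Y_{n(i)})\circ u_i=t_iu_i$. Naturality of $v$ itself (so that it is a morphism of $\Funct$) follows from the same two ingredients, and the dual Ore condition is proved by the mirror-image argument.

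Finally, for cancellation suppose $fs=gs$ with $f,g\colon X\to Y$ and $s=c^{\psi'\,\id}_X\colon X\cdot\psi'\to X$ in $S$. Applying naturality of $f$ and $g$ to the structure map $X_{\psi'(i)}\to X_i$ rewrites the hypothesis as $(\text{str }Y_{\psi'(i)}\to Y_i)\circ f_{\psi'(i)}=(\text{str }Y_{\psi'(i)}\to Y_i)\circ g_{\psi'(i)}$ for all large $i$; a cofinal nondecreasing $m\ge\id$ built from $\psi'$ (sending $j$ to a large enough index $i$ with $\psi'(i)\ge j$) then yields $t=c^{\id\,m}_Y\in S$ with $tf=tg$, the case of $j$ outside the image of $\psi'$ being absorbed by one further application of naturality, and the converse implication being dual. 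I expect the Ore condition to be the main obstacle: its content is exactly the choice of the reindexing $n$ making the square commute, and the care needed is to assert commutativity only up to eventual agreement and to keep $n$ and $m$ genuinely cofinal and nondecreasing, so that the comparison morphisms they define really belong to $S$.
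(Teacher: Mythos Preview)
Your proof is correct and follows the same overall strategy as the paper: a direct verification of the identity/composition, Ore, and cancellation axioms. The one substantive difference is in the Ore condition. The paper first reduces to the special case where $s$ has the form $Y\to Y\cdot\phi$ with $\id\le\phi$; after this reduction the completion is immediate, since one may take $g=f\cdot\phi$ and $t\colon X\to X\cdot\phi$, and the square commutes by naturality of $f$ alone. You instead treat a general $s\colon C\cdot\phi'\to C\cdot\phi$ head-on via the auxiliary reindexing $n$ with $\phi'(n(i))\ge\phi(i)$. Your route is a bit more laborious but has the advantage of not relying on the reduction step, which the paper states as ``without loss of generality'' without further comment (and which is not entirely obvious, since $C\cdot\phi$ need not literally be of the form $(C\cdot\phi')\cdot\theta$).

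For the cancellation axiom both arguments are essentially the same idea: from a given cofinal map on one side, manufacture an explicit ``inverse'' cofinal map on the other. The paper spells this out via a three-case definition of $\psi$ depending on whether $j\in\im(\phi)$, whether a largest $j_0\le j$ in $\im(\phi)$ exists, or neither; your construction of $m$ is the same recipe compressed, with the cases outside the image handled by the extra naturality step you mention. Note that you and the paper prove opposite directions of the biconditional in (c), each deferring the other to duality; together they cover the full statement.
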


\begin{proof}
In order to prove that $S$ is a localizing system, we have to check the following: \\
(a) $S$ contains $1_X$ for all $X\in\Funct$, and $S$ is closed by composition of morphisms, whenever the composition is defined. \\
(b) For all $f\in\Mor(\Funct)$, and $s\in S$, there is $g\in\Mor(\Funct)$ and $t\in S$, such that the square
$$
\xymatrix{ 
Y\ar[d]_{s}
\ar[r]^{f}
& X \ar[d]^{t} \\
\tilde{Y}\ar[r]_{g}
& \tilde{X}
}
$$
is commutative. \\ 
(c) For any pair of morphisms $f, g: X\lrar Y$ in $\Mor(\Funct)$, the existence of $s\in S$ with $s\cdot f=s\cdot g$ is equivalent to the existence of $t\in S$ with $f\cdot t=g\cdot t$.

\vspace{0.2cm} 

Condition (a) is clear. To prove (b), let $f:Y\rar X$  be given. Let be $s$ the natural transformation $Y\lrar \tilde Y$. Without loss of generality we can suppose that there exists a cofinal $\phi$ with $\id\leq\phi$ and that $s$ is the natural transformation $Y\rar Y\cdot\phi$. Then, for $g:=f\cdot\phi$, the diagram
$$
\xymatrix{ 
Y\ar[d]_{s}
\ar[r]^{f}
& X \ar[d]^{t} \\
Y\cdot\phi\ar[r]_{g}
& X\cdot\phi
}
$$
is commutative since it just expresses that $f$ is a natural transformation of functors, and (b) is proved. 

\par

For (c), let $\phi$ be cofinal. We write in components the equation $s\cdot f=s\cdot g$. We get for all $ i\in\Zee_+$ the commutative diagram

\begin{equation}\label{extensionphi}
\xymatrix{ 
X_i\ar[d]_{g_i}
\ar[r]^{f_i}
& Y_i \ar[d]^{s_i} \\
Y_i\ar[r]_{s_i}
& Y_{\phi(i)}.
}
\end{equation}
We want to define a cofinal functor $\psi:\Zee_+\rar\Zee_+$ such that, for all $j\in\Zee_+$, we have
\begin{equation}\label{extensionpsi}
\xymatrix{ 
X_{\psi(j)}\ar[d]_{t_j}
\ar[r]^{t_j}
& X_j \ar[d]^{g_j} \\
X_j\ar[r]_{f_j}
& Y_j
}
\end{equation}
The map $\psi$ is defined as follows.  

\vspace{0.1cm}

(1) If $j\in\im(\phi)$, define:
$$
\psi(j):=\max\{i_0: \phi(i_0)=j\}.
$$

(2) If $j\notin\im(\phi)$, and if there exists a largest integer 
$j_0\leq j$ which is in the image of $\phi$, define
$$
\psi(j):=\max\{i_0: \phi(i_0)=j_0\}.
$$

\vspace{0.1cm}

(3) If $j\notin\im(\phi)$, and it does not exist any integer $\leq j$ which is in 
the image of $\phi$, define 
$$
\psi(j):=0.
$$
It is clear that $\phi$ cofinal implies that $\psi$ is well defined and it is cofinal $\Zee_+\rar\Zee_+$. Moreover, if $\id_{\Zee_+}\leq\phi$, then $\psi\leq\id_{\Zee_+}$. We now prove that with this choice of $\psi$, for every $j$ we have a commutative diagram of type \eqref{extensionpsi}.

\vspace{0.2cm}

Suppose first that $j\in\im(\phi)$. Put $\psi(j)=i_0$, so that in particular 
$\phi(i_0)=j$. We have the commutative diagram

\begin{equation}\label{extension1}
\xymatrix{ 
X_{i_0}\ar[d]_{g_{i_0}}
\ar[r]^{f_{i_0}}
& Y_{i_0} \ar[d]^{s} \\
Y_{i_0}\ar[r]_{s}
& Y_{\phi(i_0)}.
}
\end{equation}
Since $\id\leq\phi$, and from naturality of both $f$ and $g$, we get the commutative diagrams
\begin{equation}\label{natural}
\xymatrix{
X_{i_0} \ar[d]_t\ar[r]^{f_{i_0}} & Y_{i_0}\ar[d]^s  && X_{i_0}  \ar[d]_t\ar[r]^{g_{i_0}} & Y_0\ar[d]^s \\
X_j\ar[r]_{f_j} & Y_j && X_j\ar[r]_{g_j} & Y_j\\
}
\end{equation}
so we obtain that \eqref{extension1} is equivalent to \eqref{extensionpsi} via the two diagrams \eqref{natural}.

\vspace{0.2cm}

Suppose next that $j\notin\im(\phi)$, and there is a largest $j_0\leq j$ which 
is in $\im(\phi)$. Then, $\psi(j)=\psi(j_0)=i_0$. So it is $\phi(i_0)=j_0$. By hypotheses, we have thus the commutative diagram \eqref{extension1}. Being $j_0\leq j$, we can compose the map $s$ with the structure map of $Y$: $Y_{j_0}\rar Y_j$. We use again the naturality of $f$ and $g$ to conclude that \eqref{extension1} is equivalent to \eqref{extensionpsi}. 

\vspace{0.2cm}

Finally, suppose $j\notin\im(\phi)$, and there is no integer $\leq j$ in the image of $\phi$. The same argument shows now that there is an integer 
$i_0$ such that for all $j\geq i_0$ we have $f_jt_j=g_jt_j$. It follows 
$f\cdot t=g\cdot t$ in $\Funct$. The proposition is proved.
\end{proof}
In particular, let us denote by $\Funct[S^{-1}]$ the localization of the category $\Funct$ by the localizing system $S$. We can describe this category by using the category of $S$-roofs over $\Funct$ (\cite{gm},  Lemma III,8).  

\vspace{0.2cm}

Thus, an object of $\Funct[S^{-1}]$ is an object of $\Func$, and a morphism $X\rar Y$ of $\Funct[S^{-1}]$ is an equivalence class of ``roofs", i.e. a pair $([f], \phi)$, where $\phi$ is cofinal $\Zee_+\rar\Zee_+$, and $[f]$ is an 
equivalence class of natural transformations $X\rar Y\cdot\phi$, where two roofs $([f],\phi), ([g],\psi)$ are equivalent if and only if there exists a third roof $([h], \theta)$ forming a commutative diagram of the form
\begin{equation}\label{eqroof}
\xymatrix{ 
&&(Y\psi)\theta&& \\
& Y\phi\ar[ru]^h && Y\psi\ar[lu]& \\
X  \ar[ru]^f \ar[rrru]_g   &&&& Y.\ar[lllu]\ar[lu]
}
\end{equation}
In particular, two morphisms $([f], \phi)$ and $([g], \psi)$ are equivalent as $S-$roofs if and only if there exists a cofinal $\theta:\Zee_+\rar\Zee_+$ such that $[f]$ and $[g]$ induce the same morphism on the objects $X_i\rar Y_{\theta(i)}$. Notice that it is $\theta(i)\geq\max(\phi(i), \psi(i))$.

\begin{theorem}\label{indlocal} 
$\Ind_{\aleph_0}(\C)$ is equivalent to $\Funct[S^{-1}]$.
\end{theorem}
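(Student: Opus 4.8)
The plan is to build an explicit comparison functor $F\colon\Funct[S^{-1}]\rar\Ind_{\aleph_0}(\C)$ and to check that it is essentially surjective and fully faithful. On objects I would send a $\Zee_+$-indexed system $X=(X_0\rar X_1\rar\cdots)$ to the ind-object $F(X):=\limXZ$. On morphisms I would exploit the roof description recalled just before the statement: a roof $([f],\phi)$ consists of a cofinal $\phi\colon\Zee_+\rar\Zee_+$ together with a natural transformation $f\colon X\rar Y\cdot\phi$, that is, a family $f_i\colon X_i\rar Y_{\phi(i)}$ compatible with the structure maps. By naturality this family determines a well-defined element of the double limit $\varprojlim_i\varinjlim_j\Hom_{\C}(X_i,Y_j)$, hence a morphism $F(X)\rar F(Y)$ via \eqref{indhom}; I send $([f],\phi)$ to that morphism.

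The first steps are the routine ones of checking that $F$ is well defined and functorial. Well-definedness reduces to two observations. First, if $f\sim f'$ as natural transformations $X\rar Y\cdot\phi$, so that $f_i=f'_i$ for $i\geq i_0$, then the projective-limit compatibility forces the germ at each small $i$ to be the restriction of the germ at large $i$; since $f$ and $f'$ agree for large $i$, they induce the same element of the double limit. Second, if $([f],\phi)$ and $([g],\psi)$ are equivalent roofs, witnessed by a cofinal $\theta$ on which $f$ and $g$ induce the same $X_i\rar Y_{\theta(i)}$, then they determine the same germs and hence the same ind-morphism. Functoriality then follows because the composite of two roofs reindexes by $\psi\circ\phi$ and composes components, which matches the componentwise composition of $\Ind(\C)$.

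For essential surjectivity I would show that every countable ind-object is isomorphic to one in the image of $F$. Reducing first to a countable filtering preorder $I$ (using \cite{sga}, I.8.1), I enumerate the elements of $I$ and use directedness to extract an increasing chain $i_0\leq i_1\leq\cdots$ that forms a cofinal (full) subcategory. By Lemma \eqref{cofinal} the given ind-object is then isomorphic to the $\Zee_+$-indexed system read off along this chain (padding with identities should the chain be finite), which is exactly $F$ of an object of $\Func$.

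The heart of the argument, and the step I expect to be the main obstacle, is full faithfulness. Faithfulness is the easier half: if two roofs $([f],\phi),([g],\psi)$ induce the same morphism of the double limit, then for every $i$ there is an $N_i\geq\max(\phi(i),\psi(i))$ with $f_i$ and $g_i$ agreeing already in $Y_{N_i}$, and setting $\theta(i):=\max(N_0,\dots,N_i,i)$ produces a single cofinal $\theta$ witnessing roof equivalence. Fullness is the delicate point. An arbitrary morphism of $\Ind(\C)$ is only a germ-compatible family $\{f_i\colon X_i\rar Y_{j(i)}\}$: the index $j(i)$ need not be monotone, and compatibility with the structure maps of $X$ holds only after pushing forward into some larger $Y_{j''}$. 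I would repair this by a recursive diagonal construction: take $\phi(0)\geq j(0)$ with $\tilde f_0$ the corresponding composite, and at stage $i+1$ use germ-compatibility between $i$ and $i+1$ to choose $\phi(i+1)\geq\max(\phi(i),j(i+1),i+1)$ large enough that the naturality square for $(i,i+1)$ commutes on the nose, taking $\tilde f_{i+1}$ to be $f_{i+1}$ pushed into $Y_{\phi(i+1)}$. Since earlier choices are never revised and $\Zee_+$ is generated by its consecutive steps, $\tilde f$ becomes an honest natural transformation $X\rar Y\cdot\phi$ with $\phi$ cofinal, lying in the original germ, so the roof $([\tilde f],\phi)$ maps onto the prescribed ind-morphism. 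This is precisely the $\Zee_+$-analogue of the straightification in Lemma \eqref{straight}, and carrying out the diagonalization while simultaneously enforcing monotonicity, cofinality, and strict commutativity of the naturality squares is where the real work lies.
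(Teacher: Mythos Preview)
Your proposal is correct and follows essentially the same route as the paper: define the comparison functor on objects by $X\mapsto\limXZ$, check essential surjectivity via cofinal $\Zee_+$-reindexing, faithfulness by producing a common cofinal $\theta$, and fullness by straightification. The only cosmetic differences are that the paper constructs the functor via the universal property of localization (defining it on $\Funct$ and showing $S$ is inverted) rather than directly on roofs, and for fullness it simply invokes Lemma~\ref{straight} rather than spelling out the recursive diagonalization you describe.
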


\begin{proof}
Let's prove the existence of a functor $\Funct[S^{-1}]\rar \Ind_{\aleph_0}(\C)$. From the universal property of the localization functor 
$\Funct\rar\Funct[S^{-1}]$, it is sufficient to prove the existence of a functor $H: \Funct\rar\Ind_{\aleph_0}(\C)$ sending each $s\in S$ into an isomorphism.

\par

Define $H$ on the objects as $H(X):=\limXZ$, and extend it to the morphisms in the obvious way. $H$ so defined is clearly a functor. Let be $s: X\phi\rar X\psi$ a morphism in $S$. Then, we get:
$$
H(s): H(X\phi)=\limXphi\stackrel{\sim}\lrar\limXpsi=H(X\psi)
$$
Since both $\phi$ and $\psi$ are cofinal, the morphism $s$ in components is then a collection of structure maps $X_{\phi(i)}\rar X_{\psi(i)}$ of the ind-system of $\limXi$, hence it is the identity of this object in $\Ind_{\aleph_0}(\C)$. $H$ thus determines a functor $\Funct[S^{-1}]\rar \Ind_{\aleph_0}(\C)$, that we still denote by $H$. We show that $H$ is full, faithful and essentially surjective, and thus an equivalence. 

\vspace{0.2cm}

(i) $H$ is full (i.e. surjective on the Hom-sets). Let be $f:X\rar Y$ a morphism in $\Ind_{\aleph_0}(\C)$. The straightification of $f$ explained in Lemma \eqref{straight} allows to write $f$ as a natural transformation of the functors $X=\{X_i\}_{i\in\Zee_+}$ and $Y=\{Y_i\}_{i\in\Zee_+}$: then we have $H(f)=f$, i.e. $H$ is full.

\vspace{0.2cm}

(ii) $H$ is faithful (i.e. injective on the Hom-sets): Let be 
$f, g\in\Mor(\Funct[S^{-1}])$, with $H(f)=H(g)$. From the above roof description of morphisms,  we can describe these two morphisms as pairs  $(f,\phi), (g,\psi)$, where $\phi$ and $\psi$  are as in the definition of $S$, such that for all $i\in\Zee_+$, $f_i:X_i\rar Y_{\phi(i)}$, $g_i:X_i\rar Y_{\psi(i)}$ are morphisms of $\C$ which commute with the structure maps of $X$ and $Y$.

\par

On the other hand, $H(f)=H(g)$, i.e.  $\limi f_i=\limi g_i$ are equal in $\indhom$, as in equation \eqref{indhom}. Thus, for all $i$, the equivalence classes of $[f_i]$ and $[g_i]$ coincide in 
$\limj\Hom_{\C}(X_i, Y_j)$. This implies that there exists a $j=j(i)$ such that the two compositions 
$X_i\stackrel{f_i}\lrar Y_{\phi(i)}\rar Y_j$ and $X_i\stackrel{g_i}\lrar Y_{\psi(i)}\rar Y_j$  coincide. The map $i\mapsto j(i)$ thus induced is therefore a cofinal map $\Zee_+\rar\Zee_+$. It follows that $f$ and $g$ induce the same morphism on the objects $X_i\rar Y_{j(i)}$, hence $f$ and $g$ are equal as $S$-roofs, and $f=g$ in $\Funct$. Then, $H$ is faithful. 

\vspace{0.2cm}

(iii) Finally, $H$ is clearly essentially surjective, since each countable ind-object in $\C$ can be written as $\limXi$, with $I=\Zee_+$. The theorem is proved.
\end{proof}
Let now be $\phi$ a cofinal functor $\Zee_+\rar\Zee_+$. Then $\phi$ gives raise to a canonically defined co-cofinal functor $\phi^o: (\Zee_+)^{op}\rar (\Zee_+)^{op}$ and conversely, any such co-cofinal functor comes from a unique cofinal functor $\Zee_+\rar\Zee_+$.

\par

Let us consider the category $\Funcop$ of contravariant functors from $\Zee_+$ to $\C$, and the corresponding category $\Funcopt$. In this category, consider the following class of morphisms:
\begin{multline}
T:=\biggl\{Y\cdot\phi_0^{op}\lrar Y\cdot\phi_1^{op} \text{ such that } Y\in\Func; \ \phi_0, \phi_1: \Zee_+\rar\Zee_+ \text{ are cofinal, } \\ 
\text{ and }\phi_0\geq\phi_1 \biggr\}.
\end{multline}

Then we have:
\begin{prop}\label{tlocal}
The class $T$ is a localizing class of morphisms in $\Funcopt$.
\end{prop}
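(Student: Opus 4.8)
The plan is to deduce this from Proposition \eqref{Slocal} by passing to the opposite category, rather than re-checking the localization axioms from scratch. The starting point is the canonical identification $\Funcop \cong \Fun(\Zee_+,\C^{op})^{op}$, coming from the elementary isomorphism $\Fun(A,B)^{op}\cong\Fun(A^{op},B^{op})$ applied with $A=\Zee_+$ and $B=\C^{op}$. Since the equivalence relation $\sim$ (eventual agreement of components) is insensitive to reversing arrows, this descends to an isomorphism of quotient categories $\Funcopt \cong \Fun'(\Zee_+,\C^{op})^{op}$, where $\Fun'(\Zee_+,\C^{op})$ denotes the quotient category of the previous subsection built with $\C$ replaced by $\C^{op}$.

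First I would check that under this isomorphism the class $T$ corresponds exactly to the class $S$ constructed over $\C^{op}$, read in the opposite category. Indeed, for a contravariant functor $Y$ the precomposition $Y\cdot\phi^{op}$ becomes, on the $\Fun(\Zee_+,\C^{op})$ side, ordinary precomposition $Y'\cdot\phi$ by the cofinal map $\phi$; and a generator $Y\cdot\phi_0^{op}\rar Y\cdot\phi_1^{op}$ of $T$, with $\phi_0\geq\phi_1$, turns into a morphism $Y'\cdot\phi_1\rar Y'\cdot\phi_0$ with $\phi_1\leq\phi_0$ after reversing the arrow (the outer $op$), which is precisely a generator of $S$ over $\C^{op}$. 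Thus $T$ is the image of $S^{op}$ under the isomorphism above.

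The remaining point is that the three conditions (a), (b), (c) defining a localizing system transfer along this op-identification. Conditions (a) and (c) are manifestly self-dual: (a) concerns only identities and closure under composition, while (c) is already phrased symmetrically in left and right cancellation. The step I expect to be the real obstacle is condition (b): the square-completion axiom as stated fixes one particular configuration (a morphism and an element of $S$ sharing a \emph{source}), and its image under $(-)^{op}$ is the \emph{opposite} Ore condition, which is not literally the same diagram. To close this gap I would verify that $S$ in fact satisfies both Ore conditions; this is painless, since the proof of (b) in Proposition \eqref{Slocal} uses only naturality together with the existence of a cofinal $\phi\geq\id$, an argument that runs equally well when the morphism of $S$ sits on the target side. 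Granting this, (b) for $T$ follows, and $T$ is a localizing class.

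Alternatively, and perhaps more transparently, one may simply dualize the proof of Proposition \eqref{Slocal} arrow by arrow. Conditions (a) and (b) are then immediate, and for (c) one reconstructs the auxiliary cofinal map $\psi\colon\Zee_+\rar\Zee_+$ by the same three-case recipe (on $\im(\phi)$, on integers below the image, and strictly below the image), the only change being that the inequality $\id\leq\phi$ is replaced throughout by its pro-analogue and that the structure maps of the pro-object $Y$ now run the other way. The crux in this direct route is exactly the explicit construction of $\psi$ and the verification, via naturality, that the two componentwise squares become equivalent; this is the pro-dual of diagrams \eqref{extension1}--\eqref{natural} and requires no new idea.
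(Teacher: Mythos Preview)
Your proposal is correct, and your second alternative---dualizing the proof of Proposition~\eqref{Slocal} step by step---is precisely what the paper does: its entire proof reads ``obtained from [that] of the analogous Proposition~\eqref{Slocal} \ldots\ with the obvious modifications.'' Your first route, reducing formally to Proposition~\eqref{Slocal} applied to $\C^{op}$ via the identification $\Funcopt\cong\Fun'(\Zee_+,\C^{op})^{op}$, is a slightly more conceptual packaging of the same content; you correctly flag that the one asymmetric ingredient is the Ore condition (b), and your remedy (observing that the proof of (b) for $S$ works verbatim on the other side, since it only uses naturality and the existence of cofinal $\phi\geq\id$) is sound. Either way there is no new idea required, which is exactly the paper's point.
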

\begin{theorem}\label{prolocal}
$\Funcopt[T^{-1}]$ is equivalent to $\Pro_{\aleph_0}(\C)$.
\end{theorem}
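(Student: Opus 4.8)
The plan is to deduce the statement from Theorem~\eqref{indlocal} by a duality argument, working over the base category $\C^{op}$ in place of $\C$. The point of departure is the observation that a contravariant functor $\Zee_+\rar\C$ is the same datum as a covariant functor $\Zee_+\rar\C^{op}$, so that the standard identification $\Fun(A,B)^{op}\cong\Fun(A^{op},B^{op})$ specializes to a canonical isomorphism of categories
\[
\Funcop\;\cong\;\Fun(\Zee_+,\C^{op})^{op}.
\]
The relation $\sim$ is defined componentwise (eventual coincidence of the level maps $X_i\rar Y_i$) and is in particular \emph{self-dual}, so it descends along this isomorphism and yields
\[
\Funcopt\;\cong\;\bigl(\Fun'(\Zee_+,\C^{op})\bigr)^{op}.
\]
Under this isomorphism I would then compare the two distinguished classes of morphisms.

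For the category $\C^{op}$, the class $S$ of Proposition~\eqref{Slocal} consists of the natural transformations $Z\cdot\phi'\rar Z\cdot\phi$ with $Z\in\Fun'(\Zee_+,\C^{op})$ and $\phi'\leq\phi$ cofinal. I would verify that a generating morphism $Y\cdot\phi_0^{op}\rar Y\cdot\phi_1^{op}$ of $T$, with $\phi_0\geq\phi_1$, is sent by the isomorphism above precisely to the opposite of such an $S$-morphism for $\C^{op}$: the contravariance of $Y$ converts the componentwise inequality $\phi_1(i)\leq\phi_0(i)$ into structure maps $Y_{\phi_0(i)}\rar Y_{\phi_1(i)}$, and after passing to $\C^{op}$ the condition $\phi_0\geq\phi_1$ becomes exactly the condition $\phi'\leq\phi$ defining $S$. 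Thus $T$ corresponds to $S$ (for $\C^{op}$) read inside the opposite category. This is the step I expect to be the main obstacle, since one must track simultaneously the arrow reversal coming from the contravariance of the functors and the reversal coming from the $op$ on the functor category, and confirm that the two sign conventions for the comparison of cofinal maps ($\phi_0\geq\phi_1$ in $T$ against $\phi'\leq\phi$ in $S$) are compatible. That $T$ is a localizing class is Proposition~\eqref{tlocal}, which may equally be read off from Proposition~\eqref{Slocal} for $\C^{op}$, the axioms (a)--(c) for a localizing system being self-dual.

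It remains to assemble the formal consequences. Localization commutes with passage to the opposite category: for a localizing class $\Sigma$ in a category $\mathcal D$ one has $(\mathcal D^{op})[(\Sigma^{op})^{-1}]\cong(\mathcal D[\Sigma^{-1}])^{op}$. Taking $\mathcal D=\Fun'(\Zee_+,\C^{op})$ and $\Sigma=S$, and using the identification of $T$ with $S^{op}$ established above, this gives
\[
\Funcopt[T^{-1}]\;\cong\;\bigl(\Fun'(\Zee_+,\C^{op})[S^{-1}]\bigr)^{op}.
\]
By Theorem~\eqref{indlocal} applied to $\C^{op}$, the bracketed category is equivalent to $\Ind_{\aleph_0}(\C^{op})$, and hence
\[
\Funcopt[T^{-1}]\;\cong\;\Ind_{\aleph_0}(\C^{op})^{op}\;=\;\Pro_{\aleph_0}(\C),
\]
the last equality being immediate from the definition $\Pro(\C):=\Ind(\C^{op})^{op}$, the countability restriction on the index category matching on the two sides. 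This proves the theorem; apart from the duality bookkeeping of the middle paragraph, everything reduces to the machinery already developed for the inductive case.
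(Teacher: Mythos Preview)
Your argument is correct. The paper's own proof is different in spirit: it simply asserts that the proofs of Proposition~\eqref{Slocal} and Theorem~\eqref{indlocal} go through ``with the obvious modifications'' when one replaces covariant $\Zee_+$-diagrams by contravariant ones, the class $S$ by $T$, and the functor $H(X)=\underset{i}{``\varinjlim"}X_i$ by $H(X)=\underset{i}{``\varprojlim"}X_i$; in other words, one reruns the explicit construction and the full/faithful/essentially-surjective verification in the pro setting. Your approach is cleaner: rather than reproving anything, you invoke the canonical identification $\Funcopt\cong\bigl(\Fun'(\Zee_+,\C^{op})\bigr)^{op}$, match $T$ with $S^{op}$ for $\C^{op}$, use that localization commutes with passage to opposites, and then apply Theorem~\eqref{indlocal} to $\C^{op}$ together with the defining equality $\Pro(\C)=\Ind(\C^{op})^{op}$. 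The gain is that you never touch the roof calculus or the Hom-set formulas a second time; the cost is the bookkeeping you flag in the middle paragraph, tracking the two arrow reversals so that the inequality $\phi_0\geq\phi_1$ in $T$ indeed corresponds to $\phi'\leq\phi$ in $S$. You have handled that correctly.
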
 
The proof of these statements are obtained from those of the analogous Proposition \eqref{Slocal} and Theorem \eqref{indlocal} with the obvious modifications. Notice that in this case, whenever we have an object $Y\in\Funcopt$ and a morphism $t\in T$ of the form $Y\rar Y\cdot\phi^{op}$, for some nondecreasing cofinal $\phi:\Zee_+\rar\Zee_+$, it is $\id_{\Zee_{+}}\geq\phi$.

\vspace{0.3cm}

Since the equivalence $(\Zee_+)^{op}\stackrel{\sim}\lrar\Zee_-$, the preorder of nonpositive integers, it is possible to describe equivalently $\Pro_{\aleph_0}(\C)$ as a localization of {\it covariant} functors $\Zee_-\rar \C$ in the following way: given the category of covariant functors $\Funcneg$, let be $\phi:\Zee_-\rar\Zee_-$ a cofinal functor with $\phi(0)=0$. As a map of ordered set $\phi$ is a monotonic nondecreasing function $\Zee_-\rar\Zee_-$ such that $\lim_{i\to-\infty}\phi=-\infty$. Then, consider the following class of morphisms in $\Funcnegt$:
\begin{multline}
T_-:=\biggl\{Y\cdot\phi_0\lrar Y\cdot\phi_1 \text{ such that } Y\in\Funcneg; \ \phi_0, \phi_1: \Zee_-\rar\Zee_- \text{ are cofinal, } \\ 
 \text{ and }\phi_0\leq\phi_1 \biggr\}
\end{multline}
In this setting, we can reformulate Proposition\eqref{tlocal} and Theorem\eqref{prolocal} by claiming that $T_-$ is a localizing class of morphisms in $\Funcnegt$ and that $\Funcnegt[T_-^{-1}]$ is equivalent to $\Pro_{\aleph_0}(\C)$. Notice that in this case, whenever we have an object $Y\in\Funcnegt$ and a morphism $t\in T_-$ of the form $Y\rar Y\cdot\phi$, for some nondecreasing cofinal $\phi:\Zee_-\rar\Zee_-$, it is $\id_{\Zee_{-}}\leq\phi$. This reformulation will be used in the next section.

\subsection{Ind/Pro iteration and localization}\label{indproitloc}
Let $I$ and $J$ be filtering countable preorders as above.
\begin{definition}
The full subategory of $\Ind_{\aleph_0}\Pro_{\aleph_0}(\C)$ whose objects are formal limits $\indprorigaa$, for bifunctors $X:I^{op}\times J\rar \C$, is called the category of {\it straightified (countable) ind/pro objects} on $\C$, and it is denoted by $\IP(\C)$. The full subcategory of $\IP(\C)$ whose objects are strict ind/pro limits is called the {\it strict} category of straightified ind/pro objects and denoted by $\IP^s(\C)$. The categories $\PrI(\C)$, $\PrI^s(\C)$ 
are defined in a similar way.
\end{definition}
Motivated by the construction of the category $\limA$ of Beilinson we prove a localization theorem also for this category, completely analogous to Theorem \eqref{indlocal}. 

\vspace{0.3cm}

Let be  $\Pi:=\{(i,j)\in\Zee\times\Zee \ | \ i\leq j\}$. Then $\Pi$ is naturally 
a preorder with the order induced from $\Zee\times \Zee$. Let be $\Bifunc$ the category of functors $\Pi\rar \C$.

\begin{definition}
The category $\IP_{\Pi}(\C)$ is the category of formal limits $\indprorigaa$ for functors $X\colon\Pi\to\C$.
\end{definition}
Let be $I$, $J$ two countable preorders. We shall call a set 
$\Sigma\subset I^o\times J$ {\it cofinally dense} if, for each $j$, the set 
$\Sigma_j=\Sigma\cap(I\times\{j\})$ is cofinal in $I$. \\
Then, we define the category $\widetilde{\IP}(\C)$ to be the category whose objects 
are formal limits $\limsigma$, for functors $X\colon\Sigma\to\C$. 

\vspace{0.1cm}

It is clear that $\Pi$ is cofinally dense in $\Zee^o\times\Zee$, so 
$\IP_{\Pi}(\C)$ is a particular case of the category $\widetilde{\IP}(\C)$.
\begin{prop}
The category $\IP(\C)$ embeds as a full subcategory of $\widetilde{\IP}(\C)$.
\end{prop}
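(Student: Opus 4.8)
The plan is to exhibit both $\IP(\C)$ and $\widetilde{\IP}(\C)$ as full subcategories of the ambient category $\Ind_{\aleph_0}\Pro_{\aleph_0}(\C)$, and then to check only that the object class of the former sits inside the object class of the latter. Once this is arranged, fullness of the embedding requires no separate argument: for any two objects $A,B$ of $\IP(\C)$ one has
$$
\Hom_{\IP(\C)}(A,B)=\Hom_{\Ind_{\aleph_0}\Pro_{\aleph_0}(\C)}(A,B)=\Hom_{\widetilde{\IP}(\C)}(A,B),
$$
the outer equalities expressing that each of the two categories is full in $\Ind_{\aleph_0}\Pro_{\aleph_0}(\C)$. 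So the entire content of the proposition reduces to a comparison of objects together with the verification that the formal limits defining $\widetilde{\IP}(\C)$ are genuine ind-pro objects.

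For the comparison of objects, I would start from a bifunctor $X\colon I^{op}\times J\to\C$, which is what an object of $\IP(\C)$ is, and take $\Sigma:=I^o\times J$. This $\Sigma$ is cofinally dense, trivially: for every $j$ the slice $\Sigma_j$ is all of $I$, which is cofinal in $I$. Viewing $X$ as a functor on this $\Sigma$, the formal symbol $\limsigma$ is then nothing but $\indprorigaa$, so each object of $\IP(\C)$ already appears among the objects of $\widetilde{\IP}(\C)$. This is the exact analogue of the observation that $\Pi$ is cofinally dense in $\Zee^o\times\Zee$: the ``rectangular'' index sets used for $\IP(\C)$ are simply a special case of the cofinally dense ones.

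The one place where I expect real work is in justifying that, for an \emph{arbitrary} cofinally dense $\Sigma$, the formal symbol $\limsigma$ is a bona fide object of $\Ind_{\aleph_0}\Pro_{\aleph_0}(\C)$, which is what makes $\widetilde{\IP}(\C)$ a full subcategory of it. For fixed $j$ the slice $\Sigma_j$ is cofinal in the countable filtering preorder $I$, hence countable and filtering, so $\underset{i\in\Sigma_j}{``\varprojlim"}X_{i,j}$ is a legitimate object of $\Pro_{\aleph_0}(\C)$. The outer ind-structure is where cofinal density is used decisively: for $j\leq_J j'$ I must produce a pro-morphism $\underset{i\in\Sigma_j}{``\varprojlim"}X_{i,j}\to\underset{i'\in\Sigma_{j'}}{``\varprojlim"}X_{i',j'}$, and for each target index $i'\in\Sigma_{j'}$ cofinality of $\Sigma_j$ in $I$ supplies an $i\in\Sigma_j$ with $i\geq_I i'$; then $(i,j)\leq(i',j')$ in $\Sigma$, and $X$ yields the required component $X_{i,j}\to X_{i',j'}$. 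The routine-but-essential checks are that these components are compatible with the pro-structure maps and independent of the chosen $i$ modulo the equivalence relation defining pro-morphisms, and that the resulting assignment $j\mapsto\underset{i\in\Sigma_j}{``\varprojlim"}X_{i,j}$ is functorial in $j$; both follow from functoriality of $X$ and the filtering property of the slices. Granting this well-definedness, the two preceding paragraphs finish the proof.
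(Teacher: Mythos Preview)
Your proof is correct, and the underlying idea---that cofinality of each slice $\Sigma_j$ in $I$ lets one reconstruct the pro-structure maps and hence realize $\limsigma$ as a genuine object of $\Ind_{\aleph_0}\Pro_{\aleph_0}(\C)$---is exactly the content the paper relies on. The paper organizes the argument slightly differently: instead of routing through the ambient category, it writes down a functor $F\colon\IP(\C)\to\widetilde{\IP}(\C)$ and verifies bijectivity on Hom-sets directly, invoking a lemma that a family of maps $g_h\colon A_h\to B_h$ defined only for $h$ in a cofinal subset $H\subset I$ extends to a morphism of pro-objects $``\varprojlim_i"A_i\to``\varprojlim_i"B_i$. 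That lemma is precisely the cofinal-extension step you perform when building the ind-structure morphisms $\underset{i\in\Sigma_j}{``\varprojlim"}X_{i,j}\to\underset{i'\in\Sigma_{j'}}{``\varprojlim"}X_{i',j'}$. Your packaging has the advantage that once both categories are recognized as full in $\Ind_{\aleph_0}\Pro_{\aleph_0}(\C)$, the Hom-set comparison becomes a tautology and only object containment needs checking; the paper's route makes the extension lemma more visible as a standalone tool.
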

\begin{proof}(Sketch)
There is a functor $F\colon\IP(\C)\to\widetilde{\IP}(\C)$, defined on the objects as 
$$
F(\limXij)=\limsigma
$$
and extended to the morphisms in a natural way. This functor induces a bijection on the Hom-sets. This is a consequence of the following, whose proof is clear:
\begin{lm}
Let $I$ be a countable preorder, and $H\hrar I$ be cofinal in $I$. Let be 
$A=\limproi A_i$ and $B=\limproi B_i$. If for all $h\in H$ there are maps 
$g_h\colon A_h\to B_h$ compatible with the structure morphisms of the two pro-systems $\{A_i\}$ and $\{B_i\}$, then it is possible to extend $\{g_h\}$ to a 
morphism of pro-objects $g\colon\limproi A_i\to\limproi B_i$.
\end{lm}
With a similar argument it is also proved that $F$ induces an injection on the Hom-sets.
\end{proof}
\begin{prop}\label{IPequivalent}
The category $\IP_{\Pi}(\C)$ is equivalent to the category $\IP(\C)$.
\end{prop}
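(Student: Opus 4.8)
The plan is to realise both $\IP(\C)$ and $\IP_{\Pi}(\C)$ as full subcategories of the common ambient category $\widetilde{\IP}(\C)$ and to prove that the two inclusions have the same essential image; two full subcategories of a fixed category whose object classes agree up to isomorphism are equivalent, so this is enough. By the preceding Proposition the functor $F$ exhibits $\IP(\C)$ as a full subcategory of $\widetilde{\IP}(\C)$ (the relevant cofinally dense set being the whole product $I^o\times J$), while the remark preceding the statement records that $\IP_{\Pi}(\C)$ is the full subcategory of $\widetilde{\IP}(\C)$ attached to the single cofinally dense set $\Sigma=\Pi\subset\Zee^o\times\Zee$. Since in both cases the Hom-sets are computed inside $\widetilde{\IP}(\C)$, fullness makes the morphisms agree automatically, and the entire content of the Proposition reduces to the coincidence of the two essential images.

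For the inclusion of $\IP(\C)$ into the essential image of $\IP_{\Pi}(\C)$, I start from an object $\indprorigaa$ with $I,J$ arbitrary countable filtering preorders. First I reduce the index sets to $\Zee$: each countable filtering preorder contains a cofinal chain, so by Lemma \eqref{cofinal}, applied to the pro-direction for each fixed $j$ and then to the ind-direction, the object is left unchanged when the bifunctor is restricted to a product of cofinal chains, that is, to a bifunctor on $\Zee^o\times\Zee$. I then restrict this bifunctor along $\Pi\hookrightarrow\Zee^o\times\Zee$. Because $\Pi$ is cofinally dense, each fibre $\Pi_j=\{i\le j\}$ is cofinal in the pro-direction, so Lemma \eqref{cofinal} identifies the pro-object $\underset{i\in\Pi_j}{``\varprojlim"}X_{i,j}$ with $\underset{i\in\Zee}{``\varprojlim"}X_{i,j}$ for every $j$; passing to the ind-limit over $j$ shows that the restriction to $\Pi$ and the bifunctor on $\Zee^o\times\Zee$ define the same object of $\widetilde{\IP}(\C)$. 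Hence every object of $\IP(\C)$ is isomorphic to an object $\limpi$ of $\IP_{\Pi}(\C)$.

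For the reverse inclusion I must extend an arbitrary functor $X\colon\Pi\to\C$ to a bifunctor on the full grid $\Zee\times\Zee$ without altering the associated object. The key point is that the inclusion $\iota\colon\Pi\hookrightarrow\Zee\times\Zee$ admits a monotone retraction $r\colon\Zee\times\Zee\to\Pi$, $r(i,j)=(\min(i,j),j)$: it lands in $\Pi$ because $\min(i,j)\le j$, it is monotone because $\min$ is monotone in each variable, and $r\circ\iota=\id_{\Pi}$. Setting $\widetilde X:=X\circ r$ therefore yields a genuine functor $\Zee\times\Zee\to\C$ whose restriction to $\Pi$ is $X$ — the previously undefined values being $\widetilde X_{i,j}=X_{j,j}$ for $i>j$, a constant tail at the inessential end of the pro-direction. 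The bifunctor $\widetilde X$ defines an object of $\IP(\C)$, and the cofinal-density argument of the previous paragraph, applied now to $\widetilde X$, shows that this object coincides in $\widetilde{\IP}(\C)$ with the original $\IP_{\Pi}(\C)$-object $\limpi$. Thus every object of $\IP_{\Pi}(\C)$ is isomorphic to an object of $\IP(\C)$.

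Matching the essential images in both directions gives the desired equivalence $\IP(\C)\simeq\IP_{\Pi}(\C)$. I expect the main obstacle to be the careful cofinality bookkeeping in the iterated ind/pro setting: one must keep straight that cofinality in the pro-direction means reaching $-\infty$ (cofinality in $\Zee^o$) whereas cofinality in the ind-direction means reaching $+\infty$, and one must check that both the reduction to cofinal chains and the restriction along $\Pi\hookrightarrow\Zee^o\times\Zee$ can be carried out one variable at a time, so that only the single-variable Lemma \eqref{cofinal} is needed. The one genuinely new ingredient, beyond routine cofinality, is the monotone retraction $r$: it is precisely what allows a functor on the triangle $\Pi$ to be promoted to an honest bifunctor on the square, and all remaining steps are applications of Lemma \eqref{cofinal}.
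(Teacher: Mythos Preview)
Your proof is correct and follows essentially the same approach as the paper: the key step in both is extending a functor $X\colon\Pi\to\C$ to all of $\Zee\times\Zee$ by declaring $\widetilde X_{i,j}=X_{j,j}$ for $i>j$, and your packaging of this extension as precomposition with the monotone retraction $r(i,j)=(\min(i,j),j)$ is a clean way to verify that the result is genuinely functorial. The paper is terser---it only writes out the direction $\IP_{\Pi}(\C)\subset\text{ess.\,im}(F)$ and leaves the reduction from arbitrary countable preorders to $\Zee$ implicit---but the content is the same.
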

\begin{proof}
It is enough to show that for $I=J=\Zee$ and $\Sigma=\Pi$, the functor $F$ 
of the previous proposition is essentially surjective. 

\vspace{0.1cm}

Thus, for each $j$ it is $\Pi_j=\{i\in\Zee \ |\ i\leq j\}$. Let be 
$X\colon\Pi\to\C$ a functor, and consider the ind/pro object 
$\limpi$. It is possible to extend such object to the complement of $\Pi$, 
as follows: consider first, for each $j$, the object 
$X_j={\underset{i\leq j}{``\varprojlim"}}X_{i,j}$. Extend $X_j$ to a pro-object 
$\widetilde{X}_j={\underset{i\in \Zee}{``\varprojlim"}}\widetilde{X}_{i,j}$, where:
$$
\widetilde{X}_{i,j} = \left\{ \begin{array}{rl}
 X_{i,j} &\mbox{ if $i\leq j$} \\
  X_{j,j} &\mbox{ if $i>j$}
       \end{array} \right.
$$
Next, consider $\limj\ \widetilde{X}_{i,j}$. The structure maps of this ind/pro 
system are those induced in a obvious way by $X_{i,j}$. Thus,  
$\limj\ \widetilde{X}_{i,j}$ is in $\IP(\C)$, and it is clear that 
$F(\limj\ \widetilde{X}_{i,j})\cong\limpi$. Thus $F$ is essentially surjective, 
and the proposition is proved.
\end{proof}
Let be $\phi: \Zee\rar\Zee$ a cofinal and co-cofinal functor. We shall call such functors {\it bicofinal}. As a map, $\phi$ is bicofinal if and only if $\phi$ is nondecreasing, with $\lim_{n \to -\infty}\phi = -\infty$ and $\lim_{n \to\infty}\phi = \infty$. We can associate to each bicofinal functor $\phi$ an endofunctor $\tilde\phi: \Pi\rar\Pi$, defined on the objects as 
\begin{equation}\label{tildephi}
\tilde\phi(i,j):=(\phi(i), \phi(j)).
\end{equation}
Since $\phi$ is nondecreasing,  $\tilde\phi$ is well defined and it is  cofinal since $\phi$ is bicofinal. 

\par

We write $\phi\leq\psi$ whenever, for all $i\in\Zee$, we have $\phi(i)\leq\psi(i)$. Then it is clear that if $\phi\leq\psi$ there is a natural transformation of functors $X\cdot\tilde\phi\rar X\cdot\tilde\psi$, defined in components by $X_{\phi(i),\phi(j)}\rar X_{\psi(i),\psi(j)}$.

\begin{prop}\label{Sslocal}
The class 
$$
U:=\biggl\{ X\cdot\tilde\phi_0\rar X\cdot\tilde\phi_1, \ \  \phi_0, \phi_1: \Zee\rar\Zee \text{ bicofinal and } \phi_0\leq\phi_1\biggr\}
$$ 
is a localizing system of morphims in $\Bifunc$.
\end{prop}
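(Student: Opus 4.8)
The plan is to mirror, line for line, the proof of Proposition~\eqref{Slocal}, replacing the preorder $\Zee_+$ by $\Zee$, cofinal maps by bicofinal ones, and the reindexing $X\cdot\phi$ by the diagonal reindexing $X\cdot\tilde\phi$ of \eqref{tildephi}; so I must verify the three conditions (a), (b), (c) for $U$ in $\Bifunc$. Condition (a) is immediate: $\phi_0=\phi_1=\id_{\Zee}$ gives $\tilde\phi_0=\tilde\phi_1=\id_{\Pi}$ and hence $1_X\in U$ for every $X$, while closure under composition follows from transitivity of $\le$ together with the fact that the transition $X\cdot\tilde\phi_0\rar X\cdot\tilde\phi_1$ is assembled from the structure morphisms of $X$, so the composite of the arrows for $\phi_0\le\phi_1$ and $\phi_1\le\phi_2$ is exactly the arrow for $\phi_0\le\phi_2$.

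For condition (b), given $f\colon Y\rar X$ and $s\in U$, I would reduce (as in Proposition~\eqref{Slocal}) to $s\colon Y\rar Y\cdot\tilde\phi$ with $\id\le\phi$; a general $s\colon Y\cdot\tilde\phi_0\rar Y\cdot\tilde\phi_1$ is absorbed by choosing a bicofinal $\psi\ge\id$ with $\phi_0\psi\ge\phi_1$. Setting $t\colon X\rar X\cdot\tilde\phi$ to be the corresponding member of $U$ and $g:=f\cdot\tilde\phi$ the whiskering of $f$, the required square closes because at each $p\in\Pi$ it is precisely the naturality square of $f$ along the structure morphism $p\rar\tilde\phi(p)$ of $\Pi$, available since $\id\le\phi$.

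The substance of the argument is condition (c). Given $f,g\colon X\rar Y$ and $s\colon Y\rar Y\cdot\tilde\phi$ in $U$ with $\id\le\phi$ and $s\cdot f=s\cdot g$, I would produce the required $t$ by introducing the bicofinal ``partial inverse''
$$
\psi(n):=\max\{m\in\Zee : \phi(m)\le n\},
$$
which is well defined, nondecreasing and bicofinal exactly because $\phi$ is bicofinal (the set is nonempty as $\phi\to-\infty$ and bounded above as $\phi\to+\infty$), and which satisfies both $\psi\le\id$ and $\phi\circ\psi\le\id$; this is the clean form of the three-case prescription used for $\psi$ in Proposition~\eqref{Slocal}. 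Taking $t\colon X\cdot\tilde\psi\rar X$ in $U$, I would verify $f\cdot t=g\cdot t$ componentwise by transporting the hypothesis $s\cdot f=s\cdot g$ along the naturality squares of $f$ and $g$. The decisive point is that the troublesome third case of Proposition~\eqref{Slocal} (an index $n$ with no element of $\im\phi$ below it) cannot occur here, since $\phi\to-\infty$; hence one obtains the \emph{exact} equalities $f_pt_p=g_pt_p$ for all $p\in\Pi$, which explains why $U$ is already localizing in $\Bifunc$ itself, with no need to pass to a quotient by an ``eventual equality'' relation as in the $\Zee_+$-case. The reverse implication of (c) follows from the dual construction of a bicofinal $\phi\ge\id$ out of a given $\psi\le\id$.

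I expect the only real obstacle to lie in condition (c): arranging $\psi$ to be simultaneously bicofinal and to satisfy $\phi\circ\psi\le\id$, and then confirming that the diagonal action $\tilde\phi(i,j)=(\phi(i),\phi(j))$ does not interfere with the componentwise commutativity. It does not, because one and the same bicofinal $\phi$ acts on both coordinates of $\Pi$ at once, so the one-variable naturality computations of Proposition~\eqref{Slocal} apply verbatim in each coordinate; the genuinely new content is merely the two-sided control of $\psi$ at $+\infty$ and $-\infty$.
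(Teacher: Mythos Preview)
Your proposal is correct and follows essentially the same approach as the paper: both adapt Proposition~\eqref{Slocal} verbatim, construct the bicofinal $\psi$ as a partial inverse to $\phi$ (your closed formula $\psi(n)=\max\{m:\phi(m)\le n\}$ agrees with the paper's case-by-case definition), and both observe that the troublesome third case of Proposition~\eqref{Slocal} disappears because $\phi\to-\infty$, which is precisely why no passage to a quotient category analogous to $\Funct$ is needed here. Your writeup is in fact more detailed than the paper's, which simply sketches the adaptation.
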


\begin{proof}
The proof of this proposition is just an adaptation of the proof of Proposition \eqref{Slocal} to the more general case of functors $X_{i,j}:\Pi\rar\C$. Notice that in this case it is not necessary to introduce the analog of the category 
$\Funct$. Given the function $\phi$ with $\id\leq\phi$, the corresponding function $\psi$ with $\psi\leq\id$ is defined in the same way as there, and 
the fact $\lim_{i\to -\infty}\phi(i)=-\infty$ assures that $\psi$ is well defined on the whole $\Zee$ and bicofinal. The claims (a), (b) are proved in essentially the same way. Regarding (c), if $f,g:X\rar Y$ are two natural transformations of the functors $X$ and $Y$, the only difference with the proof of (c) of Proposition \eqref{Slocal} is that we use the naturality of 
$f_{i,j}$ and $g_{i,j}$ in the diagrams corresponding to the diagrams \eqref{natural}.
\end{proof}

\begin{cor}\label{biroofs}
The localized category $\Bifunc[U^{-1}]$ is equivalent to the category of $U-$roofs over $\Bifunc$.
\end{cor}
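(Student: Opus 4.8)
The plan is to obtain the statement as an immediate formal consequence of Proposition \eqref{Sslocal} together with the general description of a localized category by a calculus of fractions, exactly as was done in passing from Proposition \eqref{Slocal} to the roof description of $\Funct[S^{-1}]$ recorded after diagram \eqref{eqroof}. No new verification beyond \eqref{Sslocal} is required; the content of the corollary is the bookkeeping that identifies the abstract localization with the explicit roof category.

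First I would invoke the general fact (\cite{gm}, Sect. III,2, Lemma III,8): if $S$ is a localizing system of morphisms in a category $\mathcal D$, then the localization $\mathcal D[S^{-1}]$ is equivalent to the category of $S$-roofs over $\mathcal D$, whose objects are those of $\mathcal D$ and whose morphisms $X\rar Y$ are equivalence classes of roofs $X\xrar{f} W\xlar{s} Y$ with $s\in S$, the composition and the equivalence relation being the usual ones. Since Proposition \eqref{Sslocal} asserts precisely that $U$ is a localizing system of morphisms in $\Bifunc$, applying this statement with $\mathcal D=\Bifunc$ and $S=U$ yields at once the desired equivalence between $\Bifunc[U^{-1}]$ and the category of $U$-roofs.

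It is then worth spelling out the $U$-roofs concretely, in complete analogy with the discussion following \eqref{eqroof}. Because every element of $U$ is, by definition, a comparison $X\cdot\tilde\phi_0\rar X\cdot\tilde\phi_1$ associated through \eqref{tildephi} to a pair of bicofinal functors with $\phi_0\leq\phi_1$, and because each object $Y$ carries the canonical comparisons $Y=Y\cdot\tilde{\id}\rar Y\cdot\tilde\phi$ (for $\id\leq\phi$) lying in $U$, every $U$-roof out of $X$ into $Y$ may be put in the normalized form $(f,\phi)$ with $\phi$ bicofinal and $f\colon X\rar Y\cdot\tilde\phi$ a natural transformation; two such roofs $(f,\phi)$, $(g,\psi)$ are identified exactly when there is a bicofinal $\theta\geq\phi,\psi$ making $f$ and $g$ induce the same morphism $X\rar Y\cdot\tilde\theta$. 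This is word-for-word the $S$-roof picture, with $\Zee_+$ replaced by $\Zee$, cofinal replaced by bicofinal, and $\phi$ replaced by the induced endofunctor $\tilde\phi$ of $\Pi$ from \eqref{tildephi}.

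Since the corollary is purely formal once \eqref{Sslocal} is in hand, I do not expect any real obstacle. The only points one should check are that the composition law and the equivalence relation on $U$-roofs match those produced by the general construction, and that the normalization of an arbitrary roof to the form $(f,\phi)$ is legitimate (using that the denominators $s\in U$ are themselves comparison maps between reindexings of a fixed object by bicofinal functors); both are routine and proceed exactly as in the treatment of $S$-roofs preceding Theorem \eqref{indlocal}.
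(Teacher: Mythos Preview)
Your proposal is correct and matches the paper's approach exactly: the corollary is stated without proof as an immediate consequence of Proposition \eqref{Sslocal} and the general calculus-of-fractions description from \cite{gm}, Lemma III,8, and the paper then spells out the explicit form of $U$-roofs in precisely the way you do. No additional argument is expected or given.
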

We can describe an object of $\Bifunc[U^{-1}]$ as an object of $\Bifunc$, and a morphism $X\rar Y$ of $\Bifunc[U^{-1}]$ as an equivalence class of $U-$roofs, i.e. a pair $(f, \tilde\phi)$, where now $\tilde\phi$ is a cofinal map $\Pi\rar\Pi$ coming from a bicofinal map $\phi\colon\Zee\to\Zee$ and $f$ is a natural transformation $X\rar Y\cdot\tilde\phi$, which in components can be written, for all $(i, j)\in\Pi$ as $X_{i,j}\rar Y_{\phi(i), \phi(j)}$. Two roofs $(f, \tilde\phi)$ and $(g, \tilde\psi)$ are equivalent if and only if there exists a third roof $(h, \tilde\theta)$ forming a commutative diagram like Diagram \eqref{eqroof}. In particular, $(f, \tilde\phi)$ and $(g, \tilde\psi)$ are equivalent if and only if there exists a cofinal $\tilde\theta:\Pi\rar\Pi$ such that $f$ and $g$ induce the same morphism on the objects $X_{i,j}\rar Y_{\theta(i), \theta(j)}$. Notice that in this case we have 
$\tilde\theta(i,j)\geq \max(\phi(i), \psi(i)), \max(\phi(j), \psi(j))$.

\begin{theorem}\label{indprolocal}
Let be $U$ as in Prop. \eqref{Sslocal}. The localized category $\Bifunc[U^{-1}]$ is equivalent to $\IP(\C)$.
\end{theorem}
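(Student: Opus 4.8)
The plan is to mimic the proof of Theorem \eqref{indlocal}, replacing the one-variable ind-systems by the $\Pi$-indexed bifunctors and the reparametrizations $\phi\colon\Zee_+\to\Zee_+$ by the bicofinal maps $\phi\colon\Zee\to\Zee$ acting through $\tilde\phi$. Rather than targeting $\IP(\C)$ directly, I would first establish an equivalence $\Bifunc[U^{-1}]\simeq\IP_{\Pi}(\C)$ and then compose with the equivalence $\IP_{\Pi}(\C)\simeq\IP(\C)$ furnished by Proposition \eqref{IPequivalent}.

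First I would define a functor $H\colon\Bifunc\to\IP_{\Pi}(\C)$ on objects by $H(X):=\limpi$ and on a natural transformation $X\to Y$ by the evidently induced straight morphism of ind/pro objects; this is manifestly functorial. The key point is that $H$ inverts $U$: a morphism $X\cdot\tilde\phi_0\to X\cdot\tilde\phi_1$ in $U$ is given in components by structure maps $X_{\phi_0(i),\phi_0(j)}\to X_{\phi_1(i),\phi_1(j)}$ of the system $X$, and since $\phi_0,\phi_1$ are bicofinal the endofunctors $\tilde\phi_0,\tilde\phi_1\colon\Pi\to\Pi$ are cofinal (as remarked after \eqref{tildephi}). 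Unwinding $H(X\cdot\tilde\phi)=\underset{j}{``\varinjlim"}\underset{i\leq j}{``\varprojlim"}X_{\phi(i),\phi(j)}$, the inner pro-limit coincides with $\underset{i'\leq\phi(j)}{``\varprojlim"}X_{i',\phi(j)}$ because $\lim_{n\to-\infty}\phi=-\infty$ makes $\{\phi(i):i\leq j\}$ co-cofinal in $\Pi_{\phi(j)}$, while the outer ind-limit is unchanged because $\lim_{n\to\infty}\phi=\infty$; applying the cofinality argument of Lemma \eqref{cofinal} in each direction gives $H(X\cdot\tilde\phi_0)=H(X\cdot\tilde\phi_1)=H(X)$ with $H$ of the morphism equal to the identity. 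By the universal property of localization, $H$ then factors through a functor $\bar H\colon\Bifunc[U^{-1}]\to\IP_{\Pi}(\C)$.

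It remains to check that $\bar H$ is fully faithful and essentially surjective. Essential surjectivity is immediate from the very definition of $\IP_{\Pi}(\C)$: every object is $\limpi$ for some $X\colon\Pi\to\C$, that is, $H(X)$. For fullness, given $f\colon H(X)\to H(Y)$, I would unwind the iterated Hom formula for ind/pro objects and straighten $f$, exactly as Lemma \eqref{straight} straightens ordinary ind- and pro-morphisms, into a single natural transformation $X\to Y\cdot\tilde\phi$ for a suitable bicofinal $\phi$; by the roof description of Corollary \eqref{biroofs} this is a $U$-roof with $\bar H$ of it equal to $f$. For faithfulness, if two $U$-roofs $(f,\tilde\phi)$ and $(g,\tilde\psi)$ satisfy $\bar H(f)=\bar H(g)$, then the equality of the two morphisms in the iterated limit $\varprojlim_j\varinjlim_l\varprojlim_k\varinjlim_i\Hom_{\C}(X_{i,j},Y_{k,l})$ produces a common cofinal refinement $\tilde\theta\colon\Pi\to\Pi$ on which $f$ and $g$ agree, which is exactly the condition from Corollary \eqref{biroofs} that the two roofs be equivalent.

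The main obstacle is the fullness step, where the coupling of the two limit directions must be handled. In Theorem \eqref{indlocal} only the ind-direction was present, so a single cofinal $\phi\colon\Zee_+\to\Zee_+$ sufficed; here the pro-direction is reparametrized by the \emph{same} map $\phi$ through $\tilde\phi(i,j)=(\phi(i),\phi(j))$, so one must verify that an arbitrary ind/pro morphism — whose straightification a priori allows independent reindexings in the ind- and pro-variables, with the pro-reindexing possibly depending on the ind-index — can be realized by one uniform bicofinal $\phi$. This is precisely where bicofinality (cofinal and co-cofinal) of $\phi$ is essential, and verifying it is the technical heart of the argument.
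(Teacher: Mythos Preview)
Your proposal is correct and matches the paper's approach: reduce to $\IP_\Pi(\C)$ via Proposition \eqref{IPequivalent}, send $X$ to $\underset{j}{``\varinjlim"}\underset{i\leq j}{``\varprojlim"}X_{i,j}$, check this inverts $U$ by cofinality, then verify essential surjectivity, fullness, and faithfulness. The only difference is one of emphasis: the paper dispatches fullness in a line by appeal to straightification and instead spells out the faithfulness step in detail---producing from the ind-equality a map $j\mapsto\delta(j)$ and from the pro-equality a map $i'\mapsto i(i')$, then merging them into a single bicofinal $\theta\colon\Zee\to\Zee$---whereas you flag the same ``merge two independent reindexings into one bicofinal map'' issue but locate it in the fullness step; the underlying difficulty is identical, and your identification of it is on target.
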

\begin{proof} From Proposition \eqref{IPequivalent} it will be enough to 
prove the existence of an equivalence of categories 
$\Bifunc[U^{-1}]\cong\IP_{\Pi}(\C)$. The proof of this theorem will then be, as for the case  Prop. \eqref{Sslocal},  an adaptation of the proof of the corresponding theorem \eqref{indlocal} for $\Funct$. Given an object $X=\{X_{i,j}\}_{i\leq j} \in\Bifunc$, we define an ind-pro object as follows:
$$
\Phi(X):= \indprorigaa.
 $$ 
The correspondence $\Phi$ is then extended to the morphisms of $\Bifunc$ in the obvious way. It is easy to see that $\Phi$ is a functor 
$\Bifunc\rar\IP(\C)$. For this functor, we can prove in a similar way as in Theorem \eqref{indlocal} that if $u\in U$, $\Phi(u)$ is an isomorphism. Then $\Phi$ gives rise to a canonically defined functor 
$\Bifunc[U^{-1}]\rar\IP(\C)$, that we shall call $\hat\Phi$. On the objects, we have $\hat\Phi(X)=\Phi(X)$ and on the morphisms, if $(f, \phi)=\{f_{i,j}: X_{i,j}\rar Y_{\phi(i), \phi(j)} \}_{(i,j)\in\Pi}$, then:
$$
\hat\Phi(f)=``\limj"{\underset{i\leq j}{``\varprojlim"}}f_{i,j}.
$$
We claim that this functor is an equivalence of categories. The fact 
that $\hat\Phi$ is full (i.e surjective on the Hom-sets) is proved still using straightification of morphisms in $\IP(\C)$. We now prove that $\hat\Phi$ is faithful (i.e. injective on the Hom-sets).

\vspace{0.2cm}

Let be $(f, \tilde\phi), (g, \tilde\psi): X\rar Y$ two morphisms in $\Bifunc[U^{-1}]$, such that $\hat\Phi(f)=\hat\Phi(g)$. For all $j$, define the objects of $\Pro_{\aleph_0}(\C)$:
\begin{align*}
X_j:& ={\underset{i\leq j}{``\varprojlim"}}X_{i,j};& Y_{\phi(j)}:& ={\underset{i\leq j}{``\varprojlim"}}Y_{\phi(i),\phi(j)};& Y_{\psi(j)}:& ={\underset{i\leq j}{``\varprojlim"}}Y_{\psi(i),\psi(j)}. \\
\end{align*}
and the morphisms
\begin{align*}
f_j:& ={\underset{i\leq j}{``\varprojlim"}}f_{i,j}:X_j\lrar Y_{\phi(j)};& g_j:& ={\underset{i\leq j}{``\varprojlim"}}g_{i,j}:X_j\lrar Y_{\psi(j)}. \\
\end{align*}
Then, we have $f=``\limj"f_j=``\limj"g_j=g$, an equality of ind-morphisms. Then, for all $j$ there exists a $j'=\delta(j)$, such that the following diagram is commutative in $\Pro(\C)$:
$$
\xymatrix{ 
& Y_{\phi(j)} \ar[rd] &&\\
X_{j} \ar[ru]^{f_j} \ar[rd]_{g_{j}} & &Y_{\max(\phi(j), \psi(j))}\ar[r] &Y_{\delta(j)} \\
& Y_{\psi(j)} \ar[ru] &&
}
$$
and the map $j\mapsto\delta(j)$ is nondecreasing, with $\delta(j)\geq\max(\phi(j), \psi(j))$.

\vspace{0.2cm}

On the other hand, let's write  $Y_{\delta(j)}={\underset{i'\leq\delta(j)}{``\varprojlim"}}Y_{i',\delta(j)}$.  Given $j$, we express the equality in $\Pro(\C)$ of the morphisms in the above diagrams as follows: for all $i'$, with $i'\leq\delta(j)$, there exist $i_0=i_0(i')$ and $i_1=i_1(i')$, and an $i\leq\min(i_0, i_1)$, such that the diagram 
\begin{equation}\label{proequal}
\xymatrix{ 
&& X_{i_0, j} \ar[rd] &\\
X_{i, j}\ar[r]& X_{\min(i_0, i_1), j} \ar[ru] \ar[rd] & &Y_{i', \delta(j)} \\
&& X_{i_1, j} \ar[ru] &
}
\end{equation}
commutes. In particular, since 
$\lim_{i'\to -\infty}i_0(i')=\lim_{i'\to -\infty}i_1(i')=-\infty$, the map 
$i'\mapsto i(i')$ has limit $=-\infty$ as $i'\to -\infty$.

Then, the two maps $i'\mapsto i(i')$ and $j\mapsto\delta(j)$ combine 
together to a unique map $k\to\theta(k)$, which is a bicofinal, nondecreasing 
map $\Zee\to\Zee$, for which we can rewrite diagram \eqref{proequal} as 
$$
\xymatrix{ 
& X_{i_0, j} \ar[rd] &\\
X_{i, j} \ar[ru] \ar[rd] & &Y_{\theta(i), \theta(j)} \\
& X_{i_1, j} \ar[ru] &
}
$$
which expresses the equality of $(f, \tilde\phi)$ and $(g, \tilde\psi)$ as $U$-roofs. Then, $\hat\Phi$ is faithful. Finally, the same argument used for Theorem \eqref{indlocal} applies also to prove that $\hat\Phi$ is essentially surjective. Then $\hat\Phi$ is an equivalence and the theorem is proved.
\end{proof}
In a similar way, we can prove that also  $\PrI(\C)$ can be obtained as a localized category as follows.

\vspace{0.2cm}

 Define the set $\Lambda:=\{(i,j)\in\Zee\times\Zee \ | \ i\geq j\}$. Then $\Lambda$ is naturally an ordered set  with the order induced from $\Zee\times \Zee$. Let be $\Bifuncl$ the category of functors $\Lambda\rar \C$. Then we have the following statements, whose proofs are the analog of the proofs of Propositions \eqref{Sslocal} and Theorem\eqref{indprolocal}:
 
 \begin{prop}\label{Vlocal}
 The class $V:=\{ X\cdot\tilde\phi_0\rar X\cdot\tilde\phi_1, \ \  \phi_0, \phi_1: \Zee\rar\Zee \text { bicofinal and } \phi_0\leq\phi_1\}$ is a localizing system of morphims in $\Bifuncl$.
\end{prop}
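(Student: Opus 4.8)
The plan is to transcribe, essentially verbatim, the proof of Proposition \eqref{Sslocal}, replacing throughout the poset $\Pi=\{(i,j)\mid i\leq j\}$ by $\Lambda=\{(i,j)\mid i\geq j\}$ and the defining inequality $i\leq j$ by $i\geq j$. The first thing to check is that the assignment \eqref{tildephi} still produces a well-defined cofinal endofunctor $\tilde\phi\colon\Lambda\rar\Lambda$ for every bicofinal $\phi\colon\Zee\rar\Zee$. Well-definedness requires that $\tilde\phi(i,j)=(\phi(i),\phi(j))$ lie in $\Lambda$ whenever $(i,j)\in\Lambda$; since $\phi$ is nondecreasing, $i\geq j$ forces $\phi(i)\geq\phi(j)$, so this holds. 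Cofinality of $\tilde\phi$ follows from the bicofinality of $\phi$ exactly as in the $\Pi$ case: given $(a,b)\in\Lambda$, the condition $\lim_{n\to\infty}\phi=\infty$ lets us choose $i\geq j$ with $\phi(i)\geq a$ and $\phi(j)\geq b$. For $\phi_0\leq\phi_1$ the componentwise maps $X_{\phi_0(i),\phi_0(j)}\rar X_{\phi_1(i),\phi_1(j)}$ assemble into a natural transformation $X\cdot\tilde\phi_0\rar X\cdot\tilde\phi_1$, so $V$ is a well-defined class of morphisms in $\Bifuncl$; as in Proposition \eqref{Sslocal}, no analogue of the quotient category $\Funct$ is needed, since we work directly with natural transformations of functors $\Lambda\rar\C$.

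For condition (a), the identities lie in $V$ (take $\phi_0=\phi_1=\id$), and $V$ is closed under composition because $\phi_0\leq\phi_1\leq\phi_2$ bicofinal yields a composite $X\cdot\tilde\phi_0\rar X\cdot\tilde\phi_2$ again of the required form. For the Ore condition (b), given $f\colon Y\rar X$ and $s\in V$ I may assume $s$ is the natural transformation $Y\rar Y\cdot\tilde\phi$ with $\id\leq\phi$; then setting $g:=f\cdot\tilde\phi$ and $t\colon X\rar X\cdot\tilde\phi$, the resulting square commutes by naturality of $f$, precisely as in the proofs of Propositions \eqref{Slocal} and \eqref{Sslocal}.

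The real content is condition (c), and here the argument of Proposition \eqref{Slocal} carries over unchanged. Starting from $\id\leq\phi$, I define the auxiliary map $\psi$ by the same three-case recipe (using $\max\{i_0:\phi(i_0)=j\}$ on the image of $\phi$, and propagating to the largest image point $\leq j$ otherwise). The crucial point, identical to Proposition \eqref{Sslocal}, is that bicofinality of $\phi$—specifically $\lim_{n\to-\infty}\phi=-\infty$—guarantees that for every $j\in\Zee$ there is a largest image point $\leq j$, so the exceptional third case never occurs and $\psi$ is well defined on all of $\Zee$, bicofinal, with $\psi\leq\id$. One then verifies, using the naturality of the components $f_{i,j}$ and $g_{i,j}$ over $\Lambda$ in the diagrams analogous to \eqref{natural}, that $s\cdot f=s\cdot g$ is equivalent to $f\cdot t=g\cdot t$ for the corresponding $t\in V$.

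I do not expect any genuine obstacle. The only place where the shape of the index poset could interfere is the well-definedness and cofinality of $\tilde\phi$, together with the well-definedness of $\psi$ on all of $\Zee$, and all of these go through because a nondecreasing map $\Zee\rar\Zee$ preserves the relation $i\geq j$ just as well as it preserves $i\leq j$. In short, the passage from $\Pi$ to $\Lambda$ is purely a change of sign in the defining inequality and affects no step of the argument.
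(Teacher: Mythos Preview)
Your proposal is correct and follows exactly the approach the paper takes: the paper does not give a separate proof of Proposition~\eqref{Vlocal} at all, but simply states that it is ``the analog of the proof of Proposition~\eqref{Sslocal}'', and your write-up carries out precisely that analogy, including the same construction of $\psi$ and the same observation that $\lim_{n\to-\infty}\phi=-\infty$ makes $\psi$ well defined on all of $\Zee$.
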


\begin{theorem}\label{proindlocal}
Let be $V$ as in Prop. \eqref{Vlocal}. The localized category $\Bifuncl[V^{-1}]$ is equivalent to the cateogry $\PrI(\C)$.
\end{theorem}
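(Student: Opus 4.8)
The plan is to obtain Theorem~\eqref{proindlocal} from Theorem~\eqref{indprolocal} by duality, which is the most economical route; I will also indicate the parallel direct argument that the surrounding discussion alludes to.

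For the duality route, recall from the defining relation $\Pro(\C)=\Ind(\C^{op})^{op}$ that iterating gives a canonical equivalence $\PrI(\C)\cong\IP(\C^{op})^{op}$: indeed $\Pro(\Ind(\C))^{op}=\Ind(\Ind(\C)^{op})=\Ind(\Pro(\C^{op}))=\IP(\C^{op})$. On the indexing side, the assignment $(i,j)\mapsto(-i,-j)$ is an order-reversing bijection $\Lambda\to\Pi$, hence an isomorphism of preorders $\Lambda\cong\Pi^{op}$; combining this with the replacement of $\C$ by $\C^{op}$ one gets an isomorphism of categories $\Bifuncl\cong\BiFun(\Pi,\C^{op})^{op}$ (the extra opposite comes from the fact that natural transformations of covariant functors into $\C^{op}$ reverse). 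First I would check that under this last isomorphism the class $V$ of Proposition~\eqref{Vlocal} is carried exactly to the class $U$ of Proposition~\eqref{Sslocal} formed for $\C^{op}$: a bicofinal $\phi$ is sent to the bicofinal $n\mapsto-\phi(-n)$, and the inequality $\phi_0\leq\phi_1$ is reversed, which is precisely the reversal that passage to the opposite category absorbs.

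Granting this, the remaining ingredient is the standard fact that localization commutes with taking opposites: if $W$ is a localizing system in a category $\mathcal D$ then $W^{op}$ is one in $\mathcal D^{op}$ and $(\mathcal D[W^{-1}])^{op}\cong\mathcal D^{op}[(W^{op})^{-1}]$. Applying this together with the identifications above and Theorem~\eqref{indprolocal} for the category $\C^{op}$ yields
$$
\Bifuncl[V^{-1}]\cong\bigl(\BiFun(\Pi,\C^{op})[U^{-1}]\bigr)^{op}\cong\IP(\C^{op})^{op}\cong\PrI(\C),
$$
which is the assertion. The only point to verify with care here is that the conditions (a)--(c) established in Proposition~\eqref{Vlocal} are the duals of (a)--(c) for $U$ over $\C^{op}$; since (a) and (c) are self-dual and (b) is exchanged with its mirror form, this is immediate once the correspondence $V\leftrightarrow U$ of the previous paragraph is in place.

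Alternatively, one can argue directly, mimicking Theorem~\eqref{indprolocal} with the roles of $\Ind$ and $\Pro$ interchanged. One first records the $\PrI$-analogue of Proposition~\eqref{IPequivalent}, namely $\PrI_\Lambda(\C)\cong\PrI(\C)$, using the extension $\widetilde{X}_{i,j}=X_{j,j}$ for $i<j$ (the mirror of the extension used there), and then builds the functor $\Psi(X)=\underset{j}{``\varprojlim"}\underset{i\geq j}{``\varinjlim"}X_{i,j}$, shows it inverts $V$, and descends it to $\hat\Psi\colon\Bifuncl[V^{-1}]\to\PrI(\C)$. Fullness follows from straightification (Lemma~\eqref{straight}), and essential surjectivity is formal. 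The main obstacle, exactly as in Theorem~\eqref{indprolocal}, is \emph{faithfulness}: one must convert an equality of pro-ind morphisms into a single $V$-roof. This is the dual of the chase around diagram~\eqref{proequal}; now the outer equality is read in $\Ind(\C)$ rather than $\Pro(\C)$ and the inner refinement in $\Pro(\C)$ rather than $\Ind(\C)$, so that the two reindexing maps $i'\mapsto i(i')$ and $j\mapsto\delta(j)$ again assemble into a single bicofinal $\theta\colon\Zee\to\Zee$ witnessing $(f,\tilde\phi)\sim(g,\tilde\psi)$. The duality route is preferable precisely because it discharges this bookkeeping automatically.
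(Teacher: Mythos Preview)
Your proposal is correct. The paper itself does not spell out a proof of Theorem~\eqref{proindlocal}; it merely says that the argument is the analog of that of Theorem~\eqref{indprolocal}, which is precisely your second, ``direct'' route. Your primary route via duality is a genuinely different and slicker argument: by exploiting $\PrI(\C)\cong\IP(\C^{op})^{op}$, the order-reversing bijection $\Lambda\cong\Pi^{op}$, and the compatibility of localization with opposites, you reduce everything to Theorem~\eqref{indprolocal} applied to $\C^{op}$ and avoid redoing the faithfulness bookkeeping (the assembly of the two reindexing maps into a single bicofinal $\theta$). The verification that $V$ corresponds to $U$ under $\phi\mapsto(n\mapsto-\phi(-n))$ is carried out correctly. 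The only cost of the duality route is that one must check the straightified subcategories $\PrI$ and $\IP$ match under the global equivalence $\Pro\Ind(\C)\cong(\Ind\Pro(\C^{op}))^{op}$, which is immediate from the definitions but worth stating explicitly; the paper's implicit direct approach, by contrast, requires no such compatibility check but forces one to repeat the diagram chase around~\eqref{proequal} with the roles of $\Ind$ and $\Pro$ swapped.
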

 
\section{Locally compact objects and the Kato category} 
In this section, let be $\mathcal P$ the category of compact Hausdorff totally disconnected spaces and $\mathcal L$ the category of locally compact Hausdorff totally disconnected spaces.
\begin{theorem}\label{loc} 
(1) The category $\Pro(\fset)$ is equivalent to the category $\Pro^s(\fset)$. 
\\
(2) The category $\Pro^s(\fset)$ is equivalent to $\mathcal P$, via the functor that assigns to each pro-finite set its projective limit. \\
(3) \text{(Kapranov)} The category $\mathcal L$ can be identified with the full subcategory of $\Ind^s\Pro^s(\fset)$, whose objects can be represented as ind-pro systems $X\colon I^{op}\times J\to\fset$, where I and J are filtering categories, and whose squares are cartesian.
\end{theorem}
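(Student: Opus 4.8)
The plan is to treat the three parts in order, feeding (1) and (2) into (3). Since $\Pro^s(\fset)$ is by definition a full subcategory of $\Pro(\fset)$, for (1) it suffices to show the inclusion is essentially surjective, i.e. that every pro-finite set is isomorphic to a strict one. Given $X=\proXi$ I would pass to the eventual-image subsystem: set $X_i^\infty:=\bigcap_{j\geq i}\im(X_j\rar X_i)$, which, each $X_i$ being finite, equals $\im(X_{j_0}\rar X_i)$ for some $j_0\geq i$; the structure maps then restrict to surjections $X_j^\infty\epi X_i^\infty$, so $\{X_i^\infty\}$ is strict, and the inclusion $\{X_i^\infty\}\hrar\{X_i\}$ is an isomorphism in $\Pro(\fset)$ by the standard Mittag--Leffler argument (cf. \cite{sga}, \cite{am}).

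For (2) I would check that $\varprojlim$ is well defined, fully faithful and essentially surjective. A limit of finite discrete sets along surjections is a closed subspace of a product of finite discrete spaces, hence compact, Hausdorff and totally disconnected, so it lands in $\mathcal{P}$. Essential surjectivity is the classical fact that a compact Hausdorff totally disconnected space $P$ has a basis of clopen sets and is the inverse limit of its finite clopen quotients (the canonical map being a bijection by total disconnectedness and a homeomorphism by compactness, the transition maps surjective since the blocks are nonempty). For fully faithfulness the key lemma is that a continuous map from a profinite set to a finite discrete set factors through a finite quotient, its fibres giving a finite clopen partition refined by some $X_i$; applying this to each composite $\varprojlim_i X_i\rar\varprojlim_j Y_j\rar Y_j$ produces a compatible family $X_{i(j)}\rar Y_j$, which is exactly a morphism of pro-objects as in \eqref{prohom}, inverse to the map induced on limits.

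For (3), given $X\colon I^{op}\times J\rar\fset$ with cartesian squares, each $P_j:=\varprojlim_i X_{i,j}$ lies in $\mathcal{P}$ by (2), and I would form $Z:=\varinjlim_j P_j$. The crucial dictionary is that the cartesian condition is equivalent to each transition map $P_j\rar P_{j'}$ (for $j\leq j'$) being an open embedding onto a clopen subset. Indeed, strictness of the ind-structure makes $P_j\rar P_{j'}$ an injective map of compact Hausdorff spaces, hence a closed embedding; and composing cartesian squares shows that for $i\geq i_0$ one has $\im(X_{i,j}\rar X_{i,j'})=\rho^{-1}(S)$, where $\rho\colon X_{i,j'}\rar X_{i_0,j'}$ is the pro-map and $S:=\im(X_{i_0,j}\rar X_{i_0,j'})$ is a fixed finite set, so on limits $\im(P_j\rar P_{j'})=\pi_{i_0}^{-1}(S)$ is clopen. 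Thus $Z$ is covered by the compact open sets $P_j$ and is locally compact, Hausdorff and totally disconnected, which defines the functor to $\mathcal{L}$.

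Conversely, for essential surjectivity I would take $Z\in\mathcal{L}$, let $J$ be the directed poset of its compact open subsets (directed and covering since $Z$ has a basis of compact opens), put $P_j=K_j$, and realize each $K_j$ as $\varprojlim_i X_{i,j}$ via (2); for an inclusion $K_j\subseteq K_{j'}$ the subset $K_j$ is clopen, and I would choose the finite partitions of $K_{j'}$ fine enough to restrict to partitions of both $K_j$ and $K_{j'}\smallsetminus K_j$, making the finite-level squares cartesian and giving $Z\cong\varinjlim_j\varprojlim_i X_{i,j}$. On morphisms, the $\Ind\Pro$-Hom formula together with (2) identifies a morphism of such systems with a compatible family of continuous maps $P_j\rar P'_{j'}$; since the continuous image $f(K_j)$ of a compact open is compact, hence contained in some compact open $K'_{j'}$, every continuous map $Z\rar Z'$ produces such a family and conversely, so the Hom-sets match. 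The main obstacle is precisely this cartesian/open-embedding dictionary, together with arranging the finite partitions coherently over the whole directed system $J$ so that all squares become cartesian at once; this coherence is the only genuinely delicate point and I would handle it by refining partitions along a cofinal chain and invoking straightification (Lemma \eqref{straight}).
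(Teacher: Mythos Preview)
Your proposal is correct and follows essentially the same route as the paper, which declares (1) and (2) elementary and only sketches (3); your eventual-image argument for (1) and clopen-partition argument for (2) are exactly the standard ones, and for (3) both you and the paper argue that cartesian squares force the maps $P_j\to P_{j'}$ to be open embeddings, and conversely cover $Z$ by compact clopen pieces realized as profinite quotient systems.

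The one point worth noting is the coherence step you flag at the end. Your proposed fix, refining partitions along a cofinal chain, presupposes a countable cofinal subsystem, which need not exist for a general object of $\mathcal L$ (e.g.\ an uncountable discrete space). The paper sidesteps this by making a canonical choice: for each profinite $P$ it indexes the pro-structure by the poset $J_P$ of \emph{all} finite equivalence relations on $P$ with open classes, and then observes that for a clopen inclusion $Y\hookrightarrow X$ of profinite spaces the restriction identifies $J_Y$ with a cofinal subset of $J_X$, and that for $\mathcal R'\subset\mathcal R$ the square $Y/\mathcal R\to Y/\mathcal R'$, $X/\mathcal R\to X/\mathcal R'$ is automatically cartesian. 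This canonical indexing makes the cartesian squares hold on the nose over the whole directed system, without any ad hoc refinement or appeal to straightification.
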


\begin{proof} (1) and (2) are elementary. We sketch here a proof of (3), taken from \cite{ka2}. 

\vspace{0.1cm}

Consider $\limXij$. Let $X_j=\limproi X_{i,j}$, so that $X=``\limj"X_j$. 

Every square of the system $X_{ij}$,
\begin{equation}\label{cartesian}
\xymatrix{ 
X_{i'j}\ar[d]
\ar[r]
& X_{i'j'} \ar[d] \\
X_{ij}\ar[r]_{}
& X_{ij'}
}
\end{equation}
is cartesian, the horizontal maps are injections and the vertical ones are surjections.  Because of this, the induced map $X_j\rar X_{j'}$ is an open embedding, so $X$ is locally compact and it is easy to see that $X$ is totally disconnected.

\vspace{0.2cm}

Conversely, suppose that $X$ is a locally compact totally disconnected space. We show that there exists a representation of it as the limit 
an ind-pro system \proindrigaa \ for an ind/pro system whose squares are cartesian. 
\begin{lm}
Let be $X$ as above. Then $X$ can be expressed as a filtering direct limit 
$X=\underset{j}\varinjlim\ X_j$, where each $X_j$ is both open and closed profinite space, and whose structure maps are embeddings.
\end{lm}
\begin{proof}
Suppose $X$ is a locally compact, totally disconnected Hausdorff space. 
Then, for all $x\in X$ there exists an open set $U$, such that its closure $\bar U$ is a profinite space. Thus, for all $x$ there is a neighborhood $V$ 
of $x$ which is both open and closed. Thus, $X$ can be written as the union 
of such $V$'s, i.e. $X=\underset{V\subset X}\varinjlim V$.
\end{proof}
Next, for every profinite space $X$, Let be 
$$
J_X=\{\mathcal R | \mathcal R \ \text{is an equivalence relation on X, is finite, and whose classes are open in\ } X\}.
$$ 
It is known (see e.g. \cite{rz}), that $X=\varprojlim_{\mathcal R\in J_X}X/\mathcal R$.

\vspace{0.1cm}

Furthermore, for an open embedding $Y\lrar X$ of profinite spaces,  if we write  $Y=\varprojlim_{\mathcal R'\in J_Y}Y/\mathcal R'$, then $J_Y$ is a cofinal subset of $J_X$. Moreover, for every $\mathcal R' \subset J_Y$ 
and $\mathcal R \subset J_X$ such that $\mathcal R' \subset \mathcal R$, the square
$$
\xymatrix{ 
Y/{\mathcal R}\ar[d]
\ar[r]
& Y/{\mathcal R'} \ar[d] \\
X/{\mathcal R} \ar[r]
& X/{\mathcal R'}
}$$
is cartesian.
\end{proof}
The previous theorem gives some motivation for the following
\begin{definition} Given a category $\mathcal C$, let be $\mathcal{L(C)}$ the full subcategory of $\IPs(\C)$ whose squares are cartesian. The category $\mathcal{L(C)}$ is called the {\it category of locally compact objects} of $\mathcal C$ of countable type.
\end{definition}
\begin{definition}\label{kato} 
{\bf The Kato category.}  The full subcategory of $\mathcal{L(C)}$, whose 
squares are also cocartesian, is called {\it the Kato category} associated with $\C$, and denoted $\kappa(\C)$.
\end{definition}

\begin{remark} If $\C$ is an abelian category, we have $\mathcal L(C)=\kappa(\C)$. Indeed, in an abelian category a commutative square 
$$
\xymatrix{ 
a\ar[d]_{g'}
\ar[r]^{f'}
& c\ar[d]^{g} \\
b\ar[r]_{f}
& d
}
$$
where $f, f'$ are monomorphisms and $g, g'$ are epimorphisms, is cartesian if and only if it is cocartesian. In general, however, $\kappa(\C)\neq\mathcal L(\C)$.
\end{remark}

The  following is stated in \cite{k}, and proved in \cite{kv}.

\begin{prop} $\kappa(C)$ embeds fully and faithfully in both $\IP^s(\C)$ and $\PrI^s(\C)$.
\end{prop}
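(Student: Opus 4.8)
The embedding into $\IP^s(\C)$ requires no argument: by Definition \eqref{kato} the category $\kappa(\C)$ is a full subcategory of $\mathcal L(\C)$, which is itself a full subcategory of $\IP^s(\C)$, and a full subcategory inclusion is tautologically fully faithful. All the content therefore lies in the embedding into $\PrI^s(\C)$, and the plan is to read one and the same bicartesian system in the two possible orders. Given $X\in\kappa(\C)$, represented by a functor $X\colon\Pi\to\C$ with bicartesian squares, its ind-pro avatar is $\underset{j}{``\varinjlim"}\underset{i\le j}{``\varprojlim"}X_{i,j}$; I would define $\Psi(X)$ to be the pro-ind object built from the same components, $\underset{i}{``\varprojlim"}\underset{j\ge i}{``\varinjlim"}X_{i,j}$, and extend $\Psi$ to morphisms by straightification (Lemma \eqref{straight}). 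The conceptual backbone that makes the two readings comparable is the canonical duality $\IP^s(\C^{op})^{op}\cong\PrI^s(\C)$, obtained by iterating the defining identity $\Pro(\mathcal D)=\Ind(\mathcal D^{op})^{op}$ together with the fact that a strict ind-object dualizes to a strict pro-object; under this duality the strict pro-ind world over $\C$ is the exact mirror of the strict ind-pro world over $\C^{op}$.

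The first thing to verify is that $\Psi(X)$ genuinely lies in $\PrI^s(\C)$, and not merely in $\PrI(\C)$. For each $i$ the inner ind-system $\{X_{i,j}\}_{j\ge i}$ must be strict, which reflects the cartesian (local-compactness) half of the hypothesis exactly as in Kapranov's computation in Theorem \eqref{loc}; and for $i\le i'$ the induced transition between inner ind-objects must be an epimorphism in $\Ind(\C)$. This second point is where the cocartesian hypothesis, the extra condition defining $\kappa(\C)$ inside $\mathcal L(\C)$, is indispensable, and it is the precise dual of the statement that the cartesian condition forces the outer ind-transitions of the ind-pro reading to be monic. I would prove it by passing to $\C^{op}$, where the cocartesian squares of $X$ become cartesian and the desired epimorphism becomes the monomorphism already supplied by the mirror situation. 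Thus the strictness of the pro-ind reading encodes exactly the full bicartesian condition, dually to how local compactness plus the Kato condition govern the ind-pro reading.

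For full faithfulness I would compare the two Hom-sets directly. Iterating the formulas \eqref{indhom} and \eqref{prohom}, the group $\Hom_{\IP^s(\C)}(X,Y)$ is computed by a fourfold iterated filtered limit of the bifunctor $\Hom_{\C}(X_{i,j},Y_{k,l})$, while $\Hom_{\PrI^s(\C)}(\Psi X,\Psi Y)$ is computed by the same fourfold limit taken in a different order of the four indices. The heart of the proof, and the step I expect to be the main obstacle, is to show that these two iterated limits coincide: this interchange of the orders of the filtering $\varinjlim$'s and $\varprojlim$'s is false for arbitrary ind-pro systems and holds here precisely because every square of $X$ and of $Y$ is simultaneously cartesian and cocartesian. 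To keep the bookkeeping under control I would work through the roof descriptions furnished by Theorems \eqref{indprolocal} and \eqref{proindlocal}, so that a morphism on either side is a compatible family $X_{i,j}\to Y_{\phi(i),\phi(j)}$ taken up to cofinal refinement; the bicartesian condition is then exactly what permits a family compatible in the ind-pro sense to be reorganized into one compatible in the pro-ind sense, and conversely. Once this interchange is established, injectivity and surjectivity on Hom-sets follow together, and $\Psi$ is the desired fully faithful embedding $\kappa(\C)\hookrightarrow\PrI^s(\C)$.
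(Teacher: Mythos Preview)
The paper does not actually prove this proposition: it only records that the statement appears in \cite{k} and that a proof is given in \cite{kv}. So there is no in-paper argument to compare your proposal against.

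Your outline is the correct shape and matches the strategy of \cite{kv}: the $\IP^s$-embedding is tautological from the definitions, and for the $\PrI^s$-embedding one reads the same bicartesian $\Pi$-diagram in the transposed order, using cartesianity to get strictness of the inner ind-systems and cocartesianity to get strictness of the outer pro-system. Your identification of the duality $\IP^s(\C^{op})^{op}\cong\PrI^s(\C)$ as the organizing principle is also right and is how one avoids doing everything twice.

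The one place where your proposal remains a plan rather than a proof is precisely the step you flag as the main obstacle: showing that the two fourfold iterated limits computing $\Hom_{\IP^s}$ and $\Hom_{\PrI^s}$ agree on bicartesian systems. You assert that the bicartesian condition ``is exactly what permits'' the reorganization of a compatible family from one ordering to the other, but you do not carry this out, and it does not reduce to a formal interchange of filtered limits. In \cite{kv} this is where the real work happens: one must show, concretely, that a morphism $X_j\to Y_{j'}$ of pro-objects (the data on the $\IP$ side) can be reconstructed from, and determines, a morphism of ind-objects at each pro-level (the data on the $\PrI$ side), and the bicartesian hypothesis enters through an explicit lifting/extension argument along the squares. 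Until that step is written out, your proposal is a correct road map but not yet a proof.
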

Let be $\C$ any category. We refer to the definitions and the propositions stated in sect. \eqref{indproitloc}.

\begin{definition}
$\prelim(\C)$ is the full subcategory of $\Bifunc[U^{-1}]$, whose objects are functors $X_{i,j}:\Pi\rar \C$, such that for all $i\leq i'$ and $j\leq j'$, we have: 
$$
X_{i',j}\lrar X_{i, j} \text{ is an epimorphism,}
$$
$$
X_{i,j}\lrar X_{i,j'} \text{ is a monomorphism, }
$$
and
$$
\xymatrix{ 
X_{i',j}\ar[d]
\ar[r]
&  X_{i, j}\ar[d] \\
X_{i', j'}\ar[r]_{}
& X_{i, j'}
}
$$
is cartesian.

\end{definition}

\begin{prop} There exists an equivalence of categories $\Psi:\prelim(\C)\lrar\mathcal L(\C)$, for which the diagram
$$
\xymatrix{ 
\prelim(\C)\ar[d]
\ar[r]^-{\Psi}
& \mathcal L(\C) \ar[d] \\
\Bifunc[U^{-1}]\ar[r]^-{\hat\Phi}
& \IP^s(\C)
}
$$
where the vertical arrows are embeddings, is commutative.
\end{prop}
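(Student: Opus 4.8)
The plan is to define $\Psi$ simply as the restriction of the equivalence $\hat\Phi\colon\Bifunc[U^{-1}]\to\IP(\C)$ of Theorem~\eqref{indprolocal} (under the identification $\IP_{\Pi}(\C)\cong\IP(\C)$ of Proposition~\eqref{IPequivalent}) to the full subcategory $\prelim(\C)$, and to check that it corestricts to an equivalence onto $\mathcal{L(C)}$. Since $\prelim(\C)\subset\Bifunc[U^{-1}]$ and $\mathcal{L(C)}\subset\IP^s(\C)\subset\IP(\C)$ are full subcategories and $\hat\Phi$ is fully faithful, the restriction $\Psi$ is automatically fully faithful; the commutativity of the square is then immediate, since both vertical arrows are the defining inclusions and $\Psi=\hat\Phi|_{\prelim(\C)}$ by construction. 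Thus only two things genuinely require proof: that $\hat\Phi$ carries $\prelim(\C)$ into $\mathcal{L(C)}$, and that every object of $\mathcal{L(C)}$ is hit.

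For the first point I would simply translate the defining conditions. On objects $\hat\Phi(X)=\indprorigaa$. If $X\in\prelim(\C)$, then for each $j$ the pro-object $\underset{i\leq j}{``\varprojlim"}X_{i,j}$ has epimorphic structure maps $X_{i',j}\to X_{i,j}$ and is therefore strict, while the ind-structure maps assembling these pro-objects are induced by the monomorphisms $X_{i,j}\to X_{i,j'}$; hence $\hat\Phi(X)$ is a strict ind/pro object, i.e. it lies in $\IP^s(\C)$. As the squares of $X$ are cartesian by hypothesis, $\hat\Phi(X)$ is an object of $\mathcal{L(C)}$. This shows that $\Psi:=\hat\Phi|_{\prelim(\C)}$ is a well-defined functor into $\mathcal{L(C)}$.

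The essential surjectivity of $\Psi$ is the heart of the matter, and the step I expect to require the most care. Let $Z\in\mathcal{L(C)}$. By definition of $\mathcal{L(C)}$ and of $\IP^s(\C)$, $Z$ admits a representation $Z=\underset{j}{``\varinjlim"}\,\underset{i}{``\varprojlim"}\,X_{i,j}$ by a bifunctor $X\colon I^{op}\times J\to\C$ with monomorphic ind-structure maps, epimorphic pro-structure maps, and cartesian squares. Using Lemma~\eqref{cofinal} I would pass to cofinal subsystems in order to reduce to $I=J=\Zee$, so that $X$ is defined on $\Zee^{op}\times\Zee$ and still enjoys all three properties. The key observation is then that for each fixed $j$ the subset $\{i\in\Zee\mid i\leq j\}$ is cofinal in $\Zee$ in the direction relevant to the pro-limit, namely $i\to-\infty$, since it is unbounded below; hence Lemma~\eqref{cofinal} gives $\underset{i\leq j}{``\varprojlim"}X_{i,j}\cong\underset{i\in\Zee}{``\varprojlim"}X_{i,j}$, compatibly in $j$. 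Therefore the restriction $X|_{\Pi}\colon\Pi\to\C$ still represents $Z$, that is $\hat\Phi(X|_{\Pi})\cong Z$, and since the structure maps and the squares of $X|_{\Pi}$ are among those of $X$, strictness and cartesianness survive the truncation; thus $X|_{\Pi}\in\prelim(\C)$ and $\Psi(X|_{\Pi})\cong Z$.

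The delicate point throughout is precisely this re-indexing over $\Pi$: one must be sure that truncating the pro-direction at $i=j$ does not alter the pro-object, which is exactly guaranteed by the cofinal density of $\Pi$ in $\Zee^{op}\times\Zee$ recorded before Proposition~\eqref{IPequivalent} together with the fact that the pro-limit is governed by the behaviour as $i\to-\infty$. Once this is in hand, the strictness and cartesian conditions transfer verbatim, and combining well-definedness, the inherited full faithfulness, and essential surjectivity shows that $\Psi$ is an equivalence making the square commute.
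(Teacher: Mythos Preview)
Your approach is exactly the paper's: define $\Psi$ as the restriction of $\hat\Phi$ to $\prelim(\C)$ and observe that it lands in $\mathcal L(\C)$ and is an equivalence. The paper's own proof dispatches this in two sentences, declaring it ``clear''; you have simply spelled out the essential surjectivity argument (re-indexing over $\Zee$ and truncating to $\Pi$ via cofinality) that the paper leaves implicit.
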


\begin{proof} 
Consider the embedding $\prelim(\C)\hrar\Bifunc[U^{-1}]$. Define $\Psi$ to be the restriction of the equivalence $\hat\Phi$ to the full subcategory $\prelim(\C)$. Then it is clear that $\Psi$ is a functor $\prelim(\C)\rar\mathcal L(\C)$ and that it is an equivalence. The proposition follows.
\end{proof}

\section{Exact categories}

\subsection{Exact categories and their abelian envelopes}

\begin{definition} [Quillen] 
An {\it exact category} is a pair $(\mathcal A, \mathcal E)$, where $\mathcal A$ is an additive category and $\mathcal E$ is a class of sequences of the type 
\begin{equation}\label{exac}
0\xrar{} a'\xrar{i} a\xrar{j} a''\xrar{} 0
\end{equation}
called the {\it admissible short exact sequences} of $\A$. A morphism which occurs to be the map $i$ of some member of the family $\E$ will be called an {\it admissible monomorphism}, while a morphism which occurs to be the map $j$ of some member of the family $\E$ will be called an {\it admissible epimorphism}. We require that  the following axioms are satisfied: \\
(1) If a sequence is isomorphic to a sequence in $\mathcal E$, then the sequence is in $\mathcal E$. \\
(2) For any pair of objects $a', a''$ of $\mathcal A$, the following short exact sequence is in $\E$:
$$
0\xrar{} a'\xrar{(id, 0)} a' \oplus a''\xrar{pr_2} a''\xrar{} 0
$$
(3a) If $a'\lrar a''$ is an admissible epimorphism, then, for every arrow $f: b''\lrar a''$ of $\A$, there exists a cartesian square
$$
\xymatrix{ 
b'\ar[d]
\ar[r]^{j'}
& b'' \ar[d]^{f} \\
a'\ar[r]_{j}
& a''
}$$
such that the arrow $j'$ is an admissible epimorphism. \\
(3b) Dually,  If $a\lrar a'$ is an admissible monomorphism, then, for every arrow $f: a\lrar b$ of $\A$, there exists a cocartesian square
$$
\xymatrix{ 
a\ar[d]_{f}
\ar[r]^{i}
& a' \ar[d] \\
b\ar[r]_{i'}
& b'
}$$
such that the arrow $i'$ is an admissible monomorphism. \\
(4) Let $f: b\rar c$ an arrow whose kernel is in $\A$. If there exists an arrow $a\rar b$ such that the composition $a\rar b\rar c$ is an admissible epimorphism, then $f$ is an admissible epimorphism. Dually for admissible monomorphisms.
\end{definition}
\begin{examplesremarks}
(a) For a shorter system of axioms,  see \cite{ke}. \\ 
(b) Any additive category can be made an exact category in a canonical way, by taking $\E$ to be the set of the split exact sequences. In particular, there are two canonical ways to turn an abelian category into an exact category: by taking $\E$ to be either the class of split sequences, or the class of all the short exact sequences in the category. When one refers to an abelian category as an exact category, it is usually meant the latter way. \\
(c) If $(\A, \E)$ is an exact category, its dual category $\A^{op}$ is also exact in a natural way, since the defining axioms for $\E$ are self-dual. In particular, an admissible monomorphism of $\A^{op}$ is the opposite of an admissible epimorphism of $\A$, and an admissible epimorphism of $\A^{op}$ is the opposite of an admissible monomorphism of $\A$.
\end{examplesremarks}
If $(\A, \E_A)$ and  $(\B, \E_B)$ are two exact categories, an {\em exact functor} $F: \A\rar \B$ is an additive functor taking admissible short exact sequences into admissible short exact sequences.
\begin{definition}
(1) Given a full subcategory $\B$ of an abelian category $\mathcal F$, we shall say that 
$\B$ is {\it closed under extensions} whenever for every short exact sequence $0\rar b'\hrar x\epi b''\rar 0$ of $\F$, with $b, b''\in\B$, we have $x\in\B$. \\
(2) A {\it fully exact subcategory} of an exact category $(\A, \E_{\A})$ is a full additive subcategory $\B\subset \A$ which is closed under extensions.
\end{definition}

If this condition is satisfied, then it is possible to endow $\B$ with a structure of an exact category, by defining the family $\E_{\B}$ of admissible short exact sequences as those sequences of $\E_{\A}$ whose terms are in $\B$. In this way, the inclusion functor  $\B\subset \A$ becomes a fully faithful exact functor.

\par

We have the  following important

\begin{theorem}\label{embedding} \cite{q} Let $(\A, \E)$ be an exact category. Let $\mathcal F$ be the additive category of the contravariant functors from $\A$ to the abelian category of abelian groups which are left exact, i.e. those functors that carry short exact sequences of $\A$ of the form \eqref{exact} into exact sequences $0\rar Fa''\rar Fa\rar Fa'$. Then, $\mathcal F$ is an abelian category, and the Yoneda functor h embeds $\A$ as a full subcategory of $\mathcal F$, closed under extensions, in such a way that a short exact sequence is in $\E$ if and only if h carries it into an exact sequence of $\mathcal F$.
\end{theorem}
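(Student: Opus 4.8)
The plan is to realize $\F$ as a localization of a larger, manifestly abelian functor category and then to read off all the stated properties of the Yoneda functor from the axioms of the exact structure. I would work inside the category $\hat{\A}:=\Fun(\A^{op},\mathbf{Ab})$ of \emph{all} additive contravariant functors $\A\rar\mathbf{Ab}$: this is a Grothendieck abelian category in which kernels, cokernels and all (co)limits are computed objectwise. The subcategory $\F\subset\hat{\A}$ of left exact functors is exactly the category of sheaves for the (additive) Grothendieck topology on $\A$ whose covers are generated by the single admissible epimorphisms $a\epi a''$; here axiom (3a) furnishes the pullback-stability of covers and axiom (4) their local character, so that this really is a topology. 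The key structural input is then that the inclusion $\F\hrar\hat{\A}$ admits a left adjoint (``sheafification'') $L$ which is \emph{exact}. Granting this, $\F$ is a reflective subcategory of an abelian category with exact reflector, hence itself abelian: a kernel in $\F$ is the objectwise kernel (an objectwise kernel of left exact functors is again left exact), while a cokernel in $\F$ is $L$ applied to the objectwise cokernel.

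Next I would treat the Yoneda functor $h(a):=\Hom_\A(-,a)$. Each representable is left exact, since applying $\Hom_\A(-,a)$ to an admissible sequence and using that in an exact category the admissible epimorphism is the cokernel of the admissible monomorphism yields precisely the left-exactness condition; thus $h$ lands in $\F$, and full faithfulness is the ordinary Yoneda lemma. That $h$ carries a sequence in $\E$ to a short exact sequence of $\F$ is seen in two steps: objectwise evaluation at any $b$ gives $0\rar\Hom(b,a')\rar\Hom(b,a)\rar\Hom(b,a'')$, exact because the admissible monomorphism is the kernel of the admissible epimorphism, so $0\rar h(a')\rar h(a)\rar h(a'')$ is left exact in $\F$; and surjectivity of $h(a)\rar h(a'')$ as a map of $\F$ holds because the objectwise cokernel presheaf is annihilated by $L$ — every element of $\Hom(c,a'')$ lifts to $\Hom(c,a)$ after pulling back along the cover $a\epi a''$, so the cokernel is locally zero. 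The same computation, read backwards, shows $h$ \emph{reflects} exactness.

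The remaining assertion is closure under extensions, which I would establish by local lifting and a pushout. Suppose $0\rar h(a')\rar X\xrar{\pi} h(a'')\rar 0$ is exact in $\F$. Since $\pi$ is an epimorphism, the class $\id_{a''}\in h(a'')(a'')$ lifts locally: there is an admissible epimorphism $p\colon c\epi a''$ and an element $\tilde p\in X(c)$, i.e.\ a morphism $\tilde p\colon h(c)\rar X$ with $\pi\circ\tilde p=h(p)$. Writing $b:=\ker p$, so that $0\rar b\rar c\xrar{p} a''\rar 0$ is admissible, the restriction $h(b)\rar h(c)\xrar{\tilde p} X$ composes to $0$ in $h(a'')$ and hence factors through $\ker\pi=h(a')$, giving by Yoneda a morphism $\beta\colon b\rar a'$ in $\A$. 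Now I form the pushout $a:=a'\sqcup_b c$ of the admissible monomorphism $b\hrar c$ along $\beta$; axiom (3b) guarantees that it exists with $a'\rar a$ again an admissible monomorphism, that the pushout square is bicartesian, and that pushing the sequence $0\rar b\rar c\rar a''\rar 0$ forward produces an admissible sequence $0\rar a'\rar a\rar a''\rar 0$. Because the pushout square is also a pullback and $X$ is left exact, $X(a)\cong X(a')\times_{X(b)}X(c)$, so the compatible pair consisting of $\tilde p$ and the canonical element of $X(a')$ glues to an element of $X(a)$, i.e.\ a map $h(a)\rar X$ inducing the identity on sub- and quotient objects; the five lemma in $\F$ makes it an isomorphism. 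Finally the claimed equivalence ``a sequence lies in $\E$ iff $h$ carries it to an exact sequence'' follows: one direction was proved above, and for the converse, closure under extensions together with full faithfulness identifies any sequence with $h$-exact image with the $h$-image of an admissible one, whence it is admissible by axiom (1).

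The main obstacle is the exactness of the reflector $L$, equivalently the abelian-ness of $\F$: proving that the associated-sheaf functor for the admissible-epimorphism topology preserves kernels is the one genuinely non-formal point, and it is exactly here that axioms (3a) and (4) are indispensable. Once this is in hand, the exactness, reflection of exactness, and closure under extensions of $h$ all reduce to the comparatively routine arguments sketched above.
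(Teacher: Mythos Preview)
The paper does not prove this theorem: it is stated with a citation to Quillen and used as a black box throughout. There is therefore no proof in the paper to compare your proposal against.

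Your argument is essentially the standard modern proof (as in Quillen's original, or the later expositions of Thomason--Trobaugh and B\"uhler): identify $\F$ with the additive sheaves for the topology generated by single admissible epimorphisms, deduce abelianness from exactness of sheafification, and then run the local-lifting-plus-pushout argument for closure under extensions. Two minor remarks. First, the step ``$X(a)\cong X(a')\times_{X(b)}X(c)$'' in your extension argument needs one more sentence: the bicartesian square yields an admissible short exact sequence $0\rar b\rar a'\oplus c\rar a\rar 0$, and left-exactness of $X$ applied to \emph{this} sequence is precisely the pullback identity you assert. Second, what the topology axioms require is closure of admissible epimorphisms under composition; this is a standard consequence of the exact-category axioms, but it is not literally axiom~(4) as you write. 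Neither point is a genuine gap.
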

The category $\F$ of the Theorem is called the {\it abelian envelope} of the exact category $(\A, \E)$. We shall call the embedding 
$h$ of the theorem the {\it Quillen embedding}.

\begin{prop}\label{yoneda} The Quillen embedding $h:\A\hrar \F$ is additive and left exact.
\end{prop}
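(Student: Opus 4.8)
The plan is to unwind the definitions. By Theorem \eqref{embedding}, $\F$ is an abelian category and $h\colon\A\hrar\F$ is fully faithful. Since both $\A$ and $\F$ are additive and $h$ is (by construction, being the Yoneda functor) a functor between additive categories, the first task is to verify additivity: I would check that $h$ preserves finite biproducts, which for additive categories is equivalent to being an additive functor. Concretely, I would show $h(a'\oplus a'')\cong h(a')\oplus h(a'')$ naturally, using that $h$ is full and faithful together with the universal property characterizing the biproduct via its projection and inclusion morphisms; since $h$ preserves all the relevant identities and composites, the biproduct diagram in $\A$ maps to a biproduct diagram in $\F$.

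For left exactness, I would argue as follows. By definition, a sequence
$$
0\xrar{} a'\xrar{i} a\xrar{j} a''\xrar{} 0
$$
lies in $\E$ if and only if $h$ carries it to an exact sequence
$$
0\rar h(a')\rar h(a)\rar h(a'')\rar 0
$$
in $\F$; this is precisely the final clause of Theorem \eqref{embedding}. In particular, applying $h$ to such an admissible short exact sequence yields a genuinely exact sequence in the abelian category $\F$, which is in particular left exact, i.e. $0\rar h(a')\rar h(a)\rar h(a'')$ is exact. Thus $h$ carries admissible short exact sequences to left exact (indeed exact) sequences, which is exactly the assertion that the exact functor $h\colon(\A,\E)\to\F$ (with $\F$ given its canonical abelian-as-exact structure) is left exact in the sense relevant here.

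The main subtlety, and the step I would treat most carefully, is making precise what ``left exact'' means for a functor out of the exact category $\A$: it should mean that $h$ sends each admissible monomorphism $i\colon a'\hrar a$ to a monomorphism and, more strongly, sends the kernel-cokernel pair of every admissible short exact sequence to a left exact sequence. This is immediate from Theorem \eqref{embedding}, since $h$ sends members of $\E$ to short exact sequences of $\F$, and every short exact sequence in an abelian category is a fortiori left exact. The only genuine content beyond quoting Theorem \eqref{embedding} is the additivity verification, and even that is routine once one observes that the split sequence of axiom (2) in the definition of an exact category lies in $\E$ and is therefore carried by $h$ to a short exact sequence, forcing $h(a'\oplus a'')$ to be the biproduct of $h(a')$ and $h(a'')$ in $\F$.
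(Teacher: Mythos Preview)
The paper does not give a proof; it simply refers the reader to Weibel \cite{we}. Your write-up therefore goes further than the paper itself.

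There is, however, a genuine gap in your interpretation of ``left exact.'' You take it to mean that $h$ sends admissible short exact sequences to (left) exact sequences in $\F$, and indeed that follows trivially from Theorem \eqref{embedding}. But look at how the paper actually \emph{uses} Proposition \eqref{yoneda}: in the proof of Proposition \eqref{cartesian} one applies $h$ to the kernel sequence
\[
0\lrar a\xrar{m} b\oplus d\xrar{fp_1-gp_2} c,
\]
which is \emph{not} assumed to be an admissible short exact sequence at all --- it is merely a left exact sequence in $\A$, expressing $a$ as the kernel of an arbitrary morphism. What is required is that $h$ preserve such kernels, i.e.\ that $h$ is left exact in the classical sense of preserving finite limits (or at least kernels) whenever they exist in $\A$. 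Your argument does not establish this.

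The correct argument is that the Yoneda functor $h(a)=\Hom_{\A}(-,a)$ preserves all limits that exist in $\A$, since $\Hom_{\A}(b,-)$ is left exact for each $b$ and limits in functor categories are computed pointwise; one then checks that the resulting limit, being representable, lies in the full subcategory $\F$ and is therefore a limit there as well. Additivity follows by the same reasoning (preservation of finite products), or directly from $\Hom_{\A}(-,a'\oplus a'')\cong\Hom_{\A}(-,a')\oplus\Hom_{\A}(-,a'')$.
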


For the proof of the proposition, see \cite{we}.

\vspace{0.2cm}

An immediate consequence of theorem \eqref{embedding} is the following characterization of an exact category:

\begin{cor}\label{extensionclosed} A category $(\A, \E)$ is exact if and only if $\A$ is a full subcategory,  closed under extensions, of an abelian category
$\F$. In such a case $\E$ is the class of short exact sequences of $\F$ whose terms are in $\A$.
\end{cor}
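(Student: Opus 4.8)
The plan is to treat the two implications separately. The ``only if'' direction is essentially a restatement of the Quillen embedding of Theorem \eqref{embedding}. Given an exact category $(\A, \E)$, I would take $\F$ to be its abelian envelope, so that the Quillen embedding $h\colon\A\hrar\F$ realizes $\A$ as a full subcategory closed under extensions. The theorem asserts that a sequence of $\A$ lies in $\E$ if and only if $h$ carries it to a short exact sequence of $\F$. Since $h$ is fully faithful, every short exact sequence of $\F$ whose three terms lie in $\A$ has its two maps coming from unique morphisms of $\A$, hence is the image under $h$ of a unique sequence of $\A$, which is therefore in $\E$. This simultaneously proves the ``only if'' implication and the claimed description of $\E$.

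For the converse I would start from a full additive subcategory $\A$ of an abelian category $\F$, closed under extensions, define $\E$ to be the class of short exact sequences of $\F$ all of whose terms lie in $\A$, and then verify Quillen's axioms one at a time. Axiom (1) is immediate, since exactness of a sequence in an abelian category is invariant under isomorphism of sequences. Axiom (2) follows because a full additive inclusion preserves biproducts, so the split sequence $0\rar a'\rar a'\oplus a''\rar a''\rar 0$ formed in $\A$ is short exact in $\F$ and hence lies in $\E$.

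The substance of the verification is in Axioms (3a), (3b) and (4). For (3a), given an admissible epimorphism $a\epi a''$ and an arbitrary $f\colon b''\lrar a''$ in $\A$, I would form the pullback $b'=b''\times_{a''}a$ inside $\F$. In an abelian category the pullback of an epimorphism is an epimorphism, and the pullback preserves the kernel, so $b'\epi b''$ is epi with kernel isomorphic to $a'=\ker(a\epi a'')\in\A$; this produces a short exact sequence $0\rar a'\rar b'\rar b''\rar 0$ in $\F$ with $a', b''\in\A$. Closure under extensions then forces $b'\in\A$, and fullness of the inclusion transfers the universal property of the pullback from $\F$ down to $\A$, so the square is cartesian in $\A$ and $b'\epi b''$ is admissible. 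Axiom (3b) is dual: form the pushout $b'=a'\sqcup_a b$ in $\F$, use that the pushout of a monomorphism is a monomorphism with the same cokernel to get $0\rar b\rar b'\rar a''\rar 0$ with $a''=\coker(a\hrar a')\in\A$, and again invoke closure under extensions. For Axiom (4), if $f\colon b\lrar c$ has kernel in $\A$ and some $a\lrar b$ makes $a\lrar b\lrar c$ an admissible epimorphism, then $f$ is epi in $\F$ (a composite epimorphism forces its last factor to be epi); since $\ker f\in\A$ by hypothesis, the sequence $0\rar\ker f\rar b\xrar{f} c\rar 0$ lies in $\E$, so $f$ is an admissible epimorphism.

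I expect the pair of axioms (3a)/(3b) to be the main obstacle, for two distinct reasons: first, one must argue that the pullback or pushout, a priori only an object of $\F$, actually lands in $\A$, and this is precisely the point where closure under extensions enters, via the short exact sequence that relates the new object to two objects already known to be in $\A$; second, one must check that the cartesian (resp.\ cocartesian) diagram, which is universal only in $\F$, remains universal after restriction to the full subcategory $\A$, which again is where fullness of the inclusion is essential. The remaining axioms are formal consequences of standard abelian-category facts.
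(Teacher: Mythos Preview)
Your proposal is correct. The paper states this result as ``an immediate consequence'' of Theorem~\ref{embedding} and gives no proof at all, so your argument is strictly more detailed than what appears there. Your ``only if'' direction is exactly the implicit reasoning the paper has in mind; your ``if'' direction---the axiom-by-axiom verification that a full extension-closed subcategory of an abelian category satisfies Quillen's conditions---is the standard argument, which the paper simply takes for granted. The key insight you correctly isolate, namely that closure under extensions is precisely what forces the pullback (resp.\ pushout) formed in $\F$ to land back in $\A$, is the heart of the matter, and your treatment of it is sound.
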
 

The following will be useful:

\begin{lm}\label{cokernels} 
Let $m:a\hrar b$ be a monomorphism of $\F$ with $a\in\A$. Then, the following are equivalent: \\
(1) $m$ is admissible; \\
(2) $\coker(m)$ is in $\A$. \\
Dually for epimorphisms $e: c\epi a$.
\end{lm}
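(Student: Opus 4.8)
The plan is to deduce both implications directly from the abelian envelope description of $(\A,\E)$ furnished by Theorem \eqref{embedding} and its Corollary \eqref{extensionclosed}. The guiding observation is that, after the Quillen embedding, $\E$ is nothing but the class of those short exact sequences of $\F$ whose three terms all lie in $\A$; so the entire lemma should reduce to matching up ``admissible'' with ``genuinely exact in $\F$ with terms in $\A$''. The only point that requires care is that the middle object $b$ is not assumed to lie in $\A$ a priori, and recovering $b\in\A$ is where extension-closedness must be invoked.

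First I would prove (1) $\Rightarrow$ (2). By the very definition of admissible monomorphism, an admissible $m\colon a\hrar b$ occurs as the map $i$ of some sequence $0\to a\xrar{m} b\xrar{j} a''\to 0$ in $\E$; in particular $b,a''\in\A$. Since the Quillen embedding carries this member of $\E$ to an exact sequence of the abelian category $\F$ (Theorem \eqref{embedding}), the object $a''$ is a cokernel of $m$ computed in $\F$. As cokernels are unique up to isomorphism, $\coker(m)\cong a''\in\A$, which is (2).

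For (2) $\Rightarrow$ (1) I set $c:=\coker(m)$ and form in $\F$ the short exact sequence $0\to a\xrar{m} b\to c\to 0$. By hypothesis $a\in\A$ and $c\in\A$, and because $\A$ is closed under extensions in $\F$ (Corollary \eqref{extensionclosed}), the middle term $b$ must itself lie in $\A$. We then have a short exact sequence of $\F$ all of whose terms belong to $\A$; by the same corollary, $\E$ is precisely the class of such sequences, so this sequence is in $\E$ and $m$ is an admissible monomorphism.

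The dual statement for an epimorphism $e\colon c\epi a$ with $a\in\A$ follows by reading the identical argument in $\A^{op}$: admissibility of $e$ exhibits a sequence $0\to\ker(e)\to c\xrar{e} a\to 0$ in $\E$, whence $\ker(e)\in\A$ by exactness in $\F$; conversely $\ker(e)\in\A$ together with $a\in\A$ forces $c\in\A$ by closure under extensions, and the resulting short exact sequence of $\F$ with all terms in $\A$ then lies in $\E$. I expect the only real obstacle to be the verification that $b$ (respectively $c$) belongs to $\A$, and this is exactly what extension-closedness of $\A$ in its abelian envelope supplies; beyond that the proof is purely formal and needs no computation.
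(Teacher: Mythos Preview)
Your proof is correct and follows essentially the same route as the paper: the nontrivial direction (2)$\Rightarrow$(1) is handled in both by forming the short exact sequence $a\hrar b\epi\coker(m)$ in $\F$, invoking extension-closedness of $\A$ to force $b\in\A$, and then applying Theorem~\ref{embedding}/Corollary~\ref{extensionclosed} to conclude the sequence lies in $\E$. The paper simply calls (1)$\Rightarrow$(2) ``trivial'' and omits the dual case, whereas you spell both out, but there is no substantive difference in approach.
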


\begin{proof} 

\par

(1)$\Rightarrow$(2) is trivial. \\
(2)$\Rightarrow$(1):  If $m$ has a cokernel in $\A$, then we obtain a short exact sequence $a\hrar b\epi c$, with $a$ and $c$ in $\A$.  Being $\A$ closed under extensions, $b$ is in $\A$. This sequence is then a short exact sequence of $\F$ made by objects of $\A$, therefore by Theorem \eqref{embedding} is an admissible short exact sequence of $\A$. Then $m$ is an admissible monomorphism.
\end{proof}

\begin{definition} Let be $\A$ an exact category.  A commutative square
$$
\xymatrix{ 
A\ar[d]
\ar[r]
& B\ar[d] \\
C\ar[r]
& D
}$$
in which the horizontal arrows are admissible monomorphisms, and the vertical arrows are admissible epimorphisms, will be called an {\it admissible square}.
\end{definition}

\begin{prop}\label{cartesian}  Let $(\A, \E)$ be an exact category. Let be 
\begin{equation}\label{square}
\xymatrix{ 
a\ar[d]_{g'}
\ar[r]^{f'}
& d\ar[d]^{g} \\
b\ar[r]_{f}
& c
}
\end{equation}
a commutative square in $\A$.

Then: \\
(1) The square is cartesian in $\A$ $\Leftrightarrow$ it is cartesian in the abelian envelope $\F$ of $\A$. \\ 
(2) Suppose that the square is admissible. Then it is cartesian in $\A$ if and only if it is cocartesian in $\A$.
\end{prop}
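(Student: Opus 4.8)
The plan is to carry out both parts inside the abelian envelope $\F$ of $\A$, exploiting Corollary \ref{extensionclosed}, which realizes $\A$ as a full subcategory of the abelian category $\F$ closed under extensions and for which $\E$ is exactly the class of short exact sequences of $\F$ with terms in $\A$. Throughout I would identify the commutative square \eqref{square} with the two maps $a\xrightarrow{(g',f')} b\oplus d\xrightarrow{(f,-g)} c$, so that the square is cartesian precisely when $a$ is the kernel of $(f,-g)$ and cocartesian precisely when $c$ is the cokernel of $a\xrightarrow{(g',-f')}b\oplus d$; the commutativity $fg'=gf'$ is exactly what makes these composites vanish.

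For part (1), the implication ``cartesian in $\F$ $\Rightarrow$ cartesian in $\A$'' is immediate from fullness: the universal property of the pullback in $\F$, tested only against objects of $\A$, yields the universal property in $\A$, the comparison morphisms being morphisms of $\A$ since the embedding is full. For the converse I would invoke Proposition \ref{yoneda}: the Quillen embedding $h$ is additive and left exact, hence preserves kernels. Thus if $a$ is the kernel of $(f,-g)$ in $\A$, then $h(a)$ is the kernel of the same map in $\F$, i.e. the square is cartesian in $\F$. This is the crux of the whole proposition: it is precisely the left exactness of $h$ that forces a finite limit computed in $\A$ to agree with the one computed in $\F$, and without an admissibility hypothesis one cannot instead argue that the $\F$-pullback already lies in $\A$.

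For part (2), assume the square admissible, so that $f',f$ are admissible monomorphisms and $g',g$ are admissible epimorphisms; in $\F$ these become genuine monomorphisms and epimorphisms by Theorem \ref{embedding}. I would chain three equivalences. First, by part (1) the square is cartesian in $\A$ iff it is cartesian in $\F$. Second, in the abelian category $\F$ a commutative square whose horizontal arrows are monomorphisms and whose vertical arrows are epimorphisms is cartesian iff it is cocartesian; this is the standard fact recalled in the Remark following Definition \ref{kato} (provable by applying the snake lemma to the map of short exact sequences given by the two rows). Third, I claim the square is cocartesian in $\F$ iff it is cocartesian in $\A$: form the pushout $Q$ of $b\xleftarrow{g'}a\xrightarrow{f'}d$ in $\F$; since $f'$ is mono, the leg $b\to Q$ is a monomorphism with cokernel isomorphic to $\coker(f')$, and because $f'$ is an admissible monomorphism, $\coker(f')$ lies in $\A$ by Lemma \ref{cokernels}. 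Hence $0\to b\to Q\to\coker(f')\to 0$ is a short exact sequence of $\F$ with outer terms in $\A$, so $Q\in\A$ by closure under extensions. Being the $\F$-pushout and lying in $\A$, $Q$ is also the $\A$-pushout (again by fullness), whence the canonical comparison $Q\to c$ is an isomorphism in $\F$ iff it is one in $\A$. Composing the three equivalences gives cartesian in $\A$ iff cocartesian in $\A$.

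The main obstacle is the hard direction of part (1): everything else is a formal consequence either of fullness or of closure under extensions, but transporting a limit that exists in $\A$ to the same limit in $\F$ genuinely requires the left exactness of the Quillen embedding. A secondary point to handle carefully in part (2) is that one cannot simply dualize part (1), since $h$ is left but not right exact; this is exactly why the transfer of cocartesian-ness back to $\A$ is effected by the extension-closure argument rather than by any colimit-preservation property of $h$.
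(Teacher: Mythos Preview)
Your argument is correct. Part (1) is exactly the paper's proof: the nontrivial direction uses that the Quillen embedding is additive and left exact, hence preserves the kernel description of the pullback, while the other direction is fullness.

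For part (2) you diverge from the paper in a useful way. The paper proceeds by asserting (after proving (1)) that ``dualizing'' yields: a square is cocartesian in $\A$ iff it is cocartesian in $\F$; it then chains cartesian in $\A$ $\Leftrightarrow$ cartesian in $\F$ $\Leftrightarrow$ cocartesian in $\F$ $\Leftrightarrow$ cocartesian in $\A$. You correctly note that this dualization is delicate, since $h$ is left but not right exact, so the direction ``cocartesian in $\A$ $\Rightarrow$ cocartesian in $\F$'' is not a formal dual of (1) for the \emph{same} $\F$ (literal dualization would land in the abelian envelope of $\A^{op}$, which need not be $\F^{op}$). Your remedy---forming the $\F$-pushout $Q$, observing that in an abelian category the pushout of the admissible mono $f'$ gives a short exact sequence $0\to b\to Q\to \coker(f')\to 0$ with outer terms in $\A$, and invoking extension-closure to place $Q$ in $\A$---cleanly establishes that the $\F$-pushout is already the $\A$-pushout, so the comparison $Q\to c$ is an isomorphism in $\F$ iff in $\A$. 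This buys you an honest proof of the third equivalence without any appeal to right exactness, at the mild cost of one extra paragraph; the paper's route is shorter but leaves the dualization step underjustified.
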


\begin{proof}
We start with a general
\begin{lm} 
In an additive category $\A$, the pair of arrows $b\xrar{f} c\xlar{g} d$ has a pullback $b\xlar{g'} a\xrar{f'} d$ if and only if $a$ is the kernel object of the arrow $b\oplus d\xrar{fp_1-gp_2} c$, where $p_1$ and $p_2$ are the projections of the biproduct $b\oplus d$ resp. on b and on d, and, having set $m=ker (fp_1-gp_2)$, it is $f'=p_2m$, $g'=p_1m$.
\end{lm}
The proof of the lemma is a straightforward application of the universal property of the pullback (for the ``if" clause), and of the kernel (for the ``only if" part of the statement). See \cite{ml}.
We now apply the lemma and the theorem to prove the proposition. 

\vspace{0.2cm}

(1) ($\Rightarrow$) Consider the cartesian square \eqref{square}.  From the previous lemma,  the pullback $a$ can be described as a left exact sequence
$$
0\lrar a\xrar{m} b\oplus d\xrar{fp_1-gp_2} c.
$$
Apply the Quillen embedding $h$. From Corollary \eqref{yoneda}, being $h$ left exact and additive, we obtain a left exact sequence
$$
0\lrar h(a)\xrar{h(m)} h(b)\oplus h(d)\xrar{h(f)p_1-h(g)p_2} h(c),
$$
(where we have denoted still by $p_1$ and $p_2$ the projections), and which says that $h(a)$ is the kernel object of the corresponding arrow of $\F$, $h(f)p_1-h(g)p_2$. This means that $h(a)$ is the pullback object of $h(f)$ and $h(g)$ in $\F$, i.e. $a$ remains the pullback of $f$ and $g$ also in $\F$.

\vspace{0.1cm}

($\Leftarrow$)  is trivial.

\par

Notice that dualizing this part of the proposition we obtain that a square is cocartesian in $\A$ if and only if is cocartesian in $\F$.

\vspace{0.2cm}

(2) Suppose  now \eqref{square} is an admissible cartesian square in $\A$. From (1) it follows that \eqref{square} is cartesian in $\F$. But $\F$ is abelian, and in an abelian category an admissible square is cartesian if and only if is cocartesian. Thus, the square is cocartesian in $\F$. Now apply (1)  again, and obtain that it is cocartesian in $\A$. The same argument, with ``cartesian" and ``cocartesian" exchanged, proves the converse of the implication. 
\end{proof}

\subsection{Ind/Pro objects and Exact categories} 

Let $\A$ be an exact category. Our next  goal is to prove that if $\A$ is an exact category, the categories $\Ind(\A)$, $\Pro(\A)$ inherit an exact structure from  that of $\A$, and to determine the exact structure of each category. For this aim, it will be useful to recall   the behavior of $\Ind(\C)$, $\Pro(\C)$ with respect to finite limits of the base category $\C$.

\begin{prop}\label{indlimits}
Let be $J$ a small filtering category and $\C$ any category. Suppose that $\C$ has finite inductive (projective) limits. Then the functor
$$
Hom(J^{op}, \C)\lrar \Pro(\C)
$$
defined by 
$$
(Y_j)_{j\in J}\longmapsto \proYj
$$ 
commutes with finite inductive (projective) limits. Similarly for $\Ind(\C)$.
\end{prop}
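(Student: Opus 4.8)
The plan is to verify the universal property directly, reducing every assertion to a statement in the category of sets (or abelian groups, if $\C$ is additive) by means of the Hom-formula \eqref{prohom}, and then invoking the single categorical fact that \emph{filtered colimits commute with finite limits} in $\mathbf{Set}$. Write $P\colon\Hom(J^{op},\C)\lrar\Pro(\C)$ for the functor $(Y_j)_j\mapsto\proYj$. Since limits and colimits in the functor category $\Hom(J^{op},\C)$ are computed pointwise whenever they exist in $\C$, the whole point is to show that $P$ sends a pointwise finite (co)limit to a (co)limit in $\Pro(\C)$.

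Fix a finite indexing category $K$ and a diagram $k\mapsto Y^k=(Y^k_j)_j$ in $\Hom(J^{op},\C)$. Because $\C$ has finite projective (resp. inductive) limits, the pointwise limit $Y$ with $Y_j=\underset{k}{\varprojlim}\,Y^k_j$ (resp. $Y_j=\underset{k}{\varinjlim}\,Y^k_j$) exists, and the structure maps $Y\lrar Y^k$ (resp. $Y^k\lrar Y$) equip $P(Y)$ with a cone (resp. cocone) in $\Pro(\C)$. I claim this (co)cone is universal, which simultaneously proves that the finite limit exists in $\Pro(\C)$ and that $P$ preserves it.

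For the projective case, test against an arbitrary pro-object $W=\proXi$ and compute, using \eqref{prohom},
\[
\underset{k}{\varprojlim}\ \Hom_{\Pro(\C)}(W,P(Y^k))=\underset{k}{\varprojlim}\ \underset{j}{\varprojlim}\ \underset{i}{\varinjlim}\ \Hom_\C(X_i,Y^k_j).
\]
Commuting the two projective limits, then moving the finite limit $\underset{k}{\varprojlim}$ through the filtered colimit $\underset{i}{\varinjlim}$ (licensed by filteredness of $I$, this being the crucial step), and finally using that $\Hom_\C(X_i,-)$ preserves the finite limit $Y_j=\underset{k}{\varprojlim}Y^k_j$, the right-hand side collapses to $\underset{j}{\varprojlim}\,\underset{i}{\varinjlim}\,\Hom_\C(X_i,Y_j)=\Hom_{\Pro(\C)}(W,P(Y))$. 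The inductive case runs in parallel: one tests maps \emph{out} of $P(Y)$, writes $\Hom_{\Pro(\C)}(P(Y^k),W)=\underset{i}{\varprojlim}\,\underset{j}{\varinjlim}\,\Hom_\C(Y^k_j,X_i)$, moves $\underset{k}{\varprojlim}$ through $\underset{j}{\varinjlim}$ by the same set-theoretic fact (now licensed by filteredness of $J$), and uses that $\Hom_\C(-,X_i)$ converts the colimit $Y_j=\underset{k}{\varinjlim}Y^k_j$ into a limit.

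The step requiring care — and the main obstacle — is not the abstract isomorphism of Hom-sets but checking that it is natural in $W$ and, above all, that under it the identity in $\Hom_{\Pro(\C)}(P(Y),P(Y))$ corresponds to precisely the (co)cone assembled from the structure maps of $Y$. This is exactly what pins down $P(Y)$ as the genuine (co)limit together with its universal (co)cone, rather than as a merely isomorphic object. These compatibilities follow by tracing representatives through the chain of canonical isomorphisms above, each of which is induced by a map of diagrams. Finally, the assertions for $\Ind(\C)$ follow by duality from $\Ind(\C)=\Pro(\C^{op})^{op}$, which interchanges finite inductive and projective limits and carries the formula \eqref{indhom} into \eqref{prohom}.
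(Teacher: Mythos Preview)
Your proof is correct and follows essentially the same approach as the paper: verify the universal property by computing Hom-sets via the formula \eqref{prohom} and then invoke the commutation of filtered colimits with finite limits in $\mathbf{Set}$. The paper carries out only the inductive-limit case explicitly (testing $\Hom_{\Pro(\C)}(\hat Y,Z)$ and moving the finite $\varprojlim_\alpha$ past the filtered $\varinjlim_j$) and leaves the rest to symmetry, whereas you spell out both cases and the naturality check; but the substance is identical.
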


\begin{proof} (see \cite{am}).
Let be $F=\{F_j\}_{j\in J}$ a finite diagram of functors $J^{op}\rar\C$. Let us denote by $Y_{\alpha}=\{Y_{\alpha j}\}$ the objects which compose the diagram $F$. Let's take $\varinjlim {F}=\{\varinjlim {F_j}\}$ and let us denote this object by $\hat{Y}=\{Y_j\}$. Then:
\begin{equation}\label{limits}
\begin{split}
Hom(\hat{Y}, Z)& =\limcom \\
&=\limcoma \\
&=\limcomak \\
&=\limcomjka \\
&=\underset{\alpha}\varprojlim Hom_{\C}(Y_{\alpha}, Z).
\end{split}
\end{equation}
Where the first equality is simply the definition of morphism in $\Pro(\C)$, eq. \eqref{prohom}. The third equality is because the functor $\underset{j}\varinjlim$ commutes with finite inverse limits (see \cite{sga}), and the fourth because $\underset{k}\varprojlim$ commutes with inverse limits. Then, equation \eqref{limits} shows that the object $\hat{Y}$ is a direct limit in $\Pro(\C)$. Similarly for the proof for an inverse limit, and for the case of $\Ind(\C)$.
\end{proof}

\begin{prop}\label{indcloslimits}
If $\C$ is closed under finite inductive (projective) limits, then also $\Ind(\C)$, $\Pro(\C)$ are closed under finite inductive (projective) limits.
\end{prop}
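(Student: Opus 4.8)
The plan is to transfer the existence of finite limits and colimits from a fixed functor category, where Proposition~\eqref{indlimits} applies directly, down to $\Pro(\C)$ and $\Ind(\C)$. First I would fix a small filtering preorder $J$ and observe that finite inductive and projective limits in the functor category $\Hom(J^{op},\C)$ are computed objectwise; hence the hypothesis that $\C$ is closed under finite inductive (projective) limits immediately implies that $\Hom(J^{op},\C)$ is closed under the same kind of limits. Write $P_J\colon\Hom(J^{op},\C)\rar\Pro(\C)$ for the functor $(Y_j)_{j\in J}\mapsto\proYj$ of Proposition~\eqref{indlimits}. By that proposition $P_J$ commutes with whichever of the two kinds of finite limits $\C$ possesses, and the analogous functor into $\Ind(\C)$ enjoys the same property.

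Next, given a finite category $\mathcal I$ and a diagram $D\colon\mathcal I\rar\Pro(\C)$, I would represent the entire diagram over a single index. The goal is to produce a small filtering $J$ together with a functor $\widetilde D\colon\mathcal I\rar\Hom(J^{op},\C)$ such that $P_J\circ\widetilde D\cong D$: that is, every pro-object $D(x)$ is written as $\proYj$ for a functor $J^{op}\rar\C$ depending functorially on $x$, and every morphism $D(u)$ is realized as a straight (natural) transformation. This is a simultaneous version of the straightification of a single morphism recorded in Lemma~\eqref{straight}, carried out so as to respect the composition law of $\mathcal I$ (cf.\ \cite{am}).

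With such a $\widetilde D$ in hand, I would form $L:=\varprojlim_{\mathcal I}\widetilde D$ in $\Hom(J^{op},\C)$, which exists by the first step. Since $P_J$ commutes with finite projective limits, $P_J(L)\cong\varprojlim_{\mathcal I}(P_J\circ\widetilde D)\cong\varprojlim_{\mathcal I}D$, so the finite projective limit of $D$ exists in $\Pro(\C)$; replacing $\varprojlim$ by $\varinjlim$ throughout handles finite inductive limits verbatim. The case of $\Ind(\C)$ is entirely dual, using $J$ in place of $J^{op}$ and the companion functor of Proposition~\eqref{indlimits}. Thus all four cases (each of $\Ind(\C)$, $\Pro(\C)$ against each of the two kinds of limit) are covered by one argument.

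The main obstacle is the simultaneous straightification of the second step: Lemma~\eqref{straight} only straightens a single morphism, whereas here finitely many objects and morphisms must be placed over one common filtering index $J$ in a way compatible with composition in $\mathcal I$, and refining $J$ to straighten one morphism could in principle disturb the straightness of another. I expect the cleanest route is to represent all objects of $D$ over a common cofinal index (for instance over the product of the index categories of the objects appearing in $D$, comparing representations by means of Lemma~\eqref{cofinal}) and then to straighten the morphisms over a further cofinal refinement, as in \cite{am}; alternatively one avoids the general statement entirely by treating only finite (co)products and (co)equalizers, out of which every finite (co)limit is built, and straightening directly in each of these special cases.
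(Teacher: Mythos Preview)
Your proposal is correct and your alternative route (b) is exactly what the paper does. The paper bypasses the general simultaneous-straightification problem by reducing to the two basic shapes: it observes that finite colimits are generated by coproducts and pushouts, straightifies the span $Z\lar X\rar Y$ into a system $\{Z_j\lar X_j\rar Y_j\}_{j\in J}$, forms the pushouts objectwise in $\C$, and then invokes Proposition~\eqref{indlimits}. So your fallback plan and the paper's proof coincide.

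Your primary plan---lift an arbitrary finite diagram $D\colon\mathcal I\rar\Pro(\C)$ to some $\Hom(J^{op},\C)$ in one stroke---is more ambitious and, as you correctly flag, is the only nontrivial point. The paper simply asserts that the span straightifies ``from Lemma~\eqref{straight}'' without further comment, even though that lemma literally treats a single arrow; you are being more scrupulous than the paper here. In practice the span case is easy (straighten $X\rar Y$ and $X\rar Z$ separately, then pass to the product of the two resulting index categories via Lemma~\eqref{cofinal}), which is presumably why the paper does not dwell on it. Your general statement for arbitrary finite $\mathcal I$ is also true and can be found in \cite{am} or \cite{ks}, but proving it from scratch is more work than the proposition warrants, so the reduction to pushouts is the cleaner choice.
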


\begin{proof} Let us prove the claim again for $\Pro(\C)$. To show that $\Pro(\C)$ is closed under arbitrary finite limits, it is enough to prove that $\Pro(\C)$ is closed under coproducts and pushouts (see \cite{sga}). To see that $\Pro(\C)$ has coproducts, for instance, we start with a diagram in $\Pro(\C)$: 
$$
\xymatrix{
Z & X  \ar[l]\ar[r] & Y \\
}
$$
From Lemma \eqref{straight}, this diagram can be straightified to a system of diagrams of $\C$:
$$
\xymatrix{
Z_j & X_j  \ar[l]\ar[r] & Y_j \\
}
$$
 For $j\in J$. Then since in $\C$ pushouts exist, we construct them pointwise, i.e. for each $j$ in the above diagram. Now apply Proposition \eqref{indlimits}, and the claim is proved. Similarly for the case of inverse limits, and for $\Ind(\C)$.

\end{proof}

\begin{prop}\label{indproabelian}
If $\F$ is abelian, then $\Ind(\F)$, $\Pro(\F)$ are abelian.
\end{prop}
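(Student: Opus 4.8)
The plan is to prove the assertion for $\Ind(\F)$; the case of $\Pro(\F)$ then follows formally, since $\Pro(\F)=\Ind(\F^{op})^{op}$ by definition, $\F^{op}$ is abelian whenever $\F$ is, and the opposite of an abelian category is again abelian. To begin, $\Ind(\F)$ is additive: its Hom-sets are iterated limits and colimits of abelian groups, hence abelian groups with bilinear composition, there is an evident zero object, and finite biproducts exist because, by Proposition \eqref{indcloslimits}, $\Ind(\F)$ is closed under finite inductive and projective limits (note $\F$, being abelian, is closed under finite limits and colimits). The same Proposition shows in particular that every morphism of $\Ind(\F)$ admits a kernel and a cokernel.

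It remains to verify the last abelian axiom: for every morphism $f$ the canonical comparison $\coker(\ker f)\to\ker(\coker f)$ from the coimage to the image is an isomorphism. Given $f\colon X\to Y$, I would first apply straightification (Lemma \eqref{straight}) to represent $f$ as a natural transformation $\{f_i\}_{i\in I}\colon\{X_i\}_{i\in I}\to\{Y_i\}_{i\in I}$ of ind-systems over a single filtering index $I$. By Proposition \eqref{indlimits}, the passage from a straight diagram over $I$ to its associated ind-object commutes with finite limits and finite colimits. Applying this repeatedly yields $\ker f=\{\ker f_i\}$ and $\coker f=\{\coker f_i\}$, and then the image $\ker(\coker f)=\{\im f_i\}$ and the coimage $\coker(\ker f)=\{\coker(\ker f_i)\}$, all as straight ind-objects over $I$. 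Consequently the comparison morphism is the straight morphism whose $i$-th component is the canonical map $\coker(\ker f_i)\to\im f_i$ in $\F$.

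Since $\F$ is abelian, each component $\coker(\ker f_i)\to\im f_i$ is an isomorphism. A straight morphism all of whose components are isomorphisms is itself an isomorphism in $\Ind(\F)$, as the componentwise inverses assemble into a natural transformation which is its two-sided inverse. Hence the coimage-to-image comparison is an isomorphism, and $\Ind(\F)$ is abelian.

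The step I expect to require the most care is the bookkeeping that keeps every intermediate construction straight over the fixed index $I$, so that Proposition \eqref{indlimits} applies at each stage: after forming $\ker f$ as a finite limit, the induced arrow $\ker f\to X$ must still be a straight morphism over $I$ in order to compute its cokernel componentwise, and likewise for the image. This is precisely what straightification over a common $I$ secures, since all the universal constructions are then performed inside the functor category $\Fun(I,\F)$—where kernels, cokernels, images and coimages are computed pointwise—before passing to $\Ind(\F)$.
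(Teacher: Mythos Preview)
Your argument is correct. The paper itself does not give a proof but merely cites Kashiwara--Schapira \cite{ks}, Theorem~(8.6.5), so you have supplied more than the paper does. Your approach---straightify $f$ over a common filtering index $I$, invoke Proposition~\eqref{indlimits} to compute kernels, cokernels, images and coimages componentwise inside $\Fun(I,\F)$, and then observe that the coimage-to-image comparison is a straight morphism with invertible components---is precisely the standard direct proof, and is essentially what the cited reference carries out. Your closing paragraph correctly identifies the only delicate point: once $f$ is realised as a morphism of $\Fun(I,\F)$, all subsequent finite (co)limit constructions stay inside $\Fun(I,\F)$ and are preserved by the functor to $\Ind(\F)$, so no re-straightification is needed.
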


\begin{proof}
See \cite{ks}, Theorem (8.6.5).
\end{proof}

As a further consequence, we obtain the straightification of monomorphisms and epimorphisms in $\Ind(\F)$, $\Pro(\F)$.

\begin{prop}\label{monostr} (Straightification of monomorphisms and epimorphisms in $\Ind(\F)$, $\Pro(\F)$.) 
Let be $\F$ an abelian category. A monomorphism $m:X\hrar Y$ in the category $\Ind(\F)$ can be represented as in Lemma \eqref{straight} by a system of monomorphisms of $\F$: $\{m_i:X_i\hrar Y_i\}$. Similarly for $\Pro(\F)$.
\end{prop}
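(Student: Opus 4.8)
The plan is to reduce everything to the behaviour of kernels and cokernels in $\Ind(\F)$, which by Proposition \eqref{indproabelian} is abelian. First I would invoke Lemma \eqref{straight} to represent the given monomorphism $m\colon X\to Y$ as a straight morphism, i.e. as a natural transformation $\{m_i\colon X_i\to Y_i\}_{i\in I}$ between ind-systems over a common index category $I$. The key observation is that, by Proposition \eqref{indlimits}, the functor $\Fun(I,\F)\to\Ind(\F)$ commutes with finite projective and inductive limits; hence the kernel and cokernel of a straight morphism are computed componentwise, so that $\ker(m)=\limi \ker(m_i)$ and likewise for cokernels. Since $m$ is a monomorphism in the abelian category $\Ind(\F)$, this already forces $\limi\ker(m_i)=0$.

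Next I would build the candidate system of monomorphisms. In the abelian category $\F$ each component factors canonically as $X_i\epi \im(m_i)\hrar Y_i$, and I set $\bar X_i:=\im(m_i)$, with $e_i\colon X_i\epi\bar X_i$ the epimorphism and $\bar m_i\colon\bar X_i\hrar Y_i$ the monomorphism. I would then check that the $\bar X_i$ inherit structure maps from $Y$: for a morphism $u\colon i\to i'$ of $I$ the structure map $Y(u)$ carries $\im(m_i)$ into $\im(m_{i'})$ because $Y(u)m_i=m_{i'}X(u)$, so that $\{\bar X_i\}$ is a genuine ind-system, $\{\bar m_i\}$ is a straight morphism consisting of monomorphisms, and $e=\{e_i\}$ is a straight morphism with $m=\bar m\circ e$ componentwise.

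The heart of the argument is to show that $e\colon X\to\bar X$ is an isomorphism in $\Ind(\F)$, for then $m$ is represented, up to the isomorphism $e$, by the system $\{\bar m_i\}$ of monomorphisms, as required. Because $\im(m_i)\hrar Y_i$ is a monomorphism one has $\ker(e_i)=\ker(m_i)$, so computing componentwise (Proposition \eqref{indlimits}) gives $\ker(e)=\limi\ker(m_i)=\ker(m)=0$; and since each $e_i$ is an epimorphism one has $\coker(e)=\limi\coker(e_i)=0$. Thus $e$ is simultaneously a monomorphism and an epimorphism in the abelian category $\Ind(\F)$, hence invertible.

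Finally, the remaining cases follow by duality and symmetry: the statement for epimorphisms in $\Ind(\F)$ is obtained by the same construction, replacing $Y_i$ by $\im(m_i)$ and using cokernels in place of kernels, and the two assertions for $\Pro(\F)$ follow from these by passing to $\F^{op}$ and using $\Pro(\F)=\Ind(\F^{op})^{op}$. I expect the only genuine obstacle to be the componentwise computation of kernels and cokernels — that is, making precise that Proposition \eqref{indlimits} applies to the equalizer/coequalizer presentations of $\ker$ and $\coker$ — since everything else is the routine image factorization in an abelian category together with the fact that a morphism which is both monic and epic in an abelian category is an isomorphism.
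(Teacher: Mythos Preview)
Your proof is correct. The paper's argument is much terser and is organized around a slightly different principle: since $\Ind(\F)$ is abelian, the monomorphism $m$ is the kernel of some morphism (for instance its own cokernel $f\colon Y\to Z$); kernels being finite limits, the construction behind Proposition \eqref{indcloslimits} shows that after straightifying $f$ one has $\ker(f)=\limi(\ker f_i\hrar Y_i)$, and these components are automatically monomorphisms. Your image-factorization route is in effect an explicit unpacking of this: taking $f=\coker(m)$ with $m$ already straight gives $\ker(f_i)=\im(m_i)=\bar X_i$, so the paper's componentwise kernel is exactly your $\bar X$, and the identification $X\cong\ker(\coker m)$ in an abelian category is exactly your isomorphism $e$. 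The paper's formulation buys brevity; yours makes transparent which representation is being modified and why the correcting map $e$ is invertible.
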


\begin{proof}
In fact, in an abelian category a monomorphism is always the kernel of a morphism, and an epimorphism a cokernel of a morphism of the category. But since ``kernel" and ``cokernel" are finite limits, Proposition\eqref{indcloslimits} applies to the abelian categories $\Ind(\F)$ and $\Pro(\F)$, and the claim follows. 
\end{proof}

\begin{prop}
Let be $(\A, \E)$ be an exact category. Then $\Ind(\A)$, $\Pro(\A)$ have natural structures of exact categories. 
\end{prop}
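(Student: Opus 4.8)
The plan is to realize $\Ind(\A)$ as a full subcategory, closed under extensions, of an abelian category, and then invoke Corollary \eqref{extensionclosed} to endow it with a canonical exact structure; the case of $\Pro(\A)$ will be dual. First I would fix the Quillen embedding $h\colon\A\hrar\F$ into the abelian envelope $\F$ of Theorem \eqref{embedding}, so that $\A$ is full in $\F$ and closed under extensions. Applying the $\Ind$ construction produces a functor $\Ind(h)\colon\Ind(\A)\to\Ind(\F)$, and $\Ind(\F)$ is abelian by Proposition \eqref{indproabelian}. The candidate exact structure on $\Ind(\A)$ is then the one supplied by Corollary \eqref{extensionclosed}: a sequence is declared admissible exactly when $\Ind(h)$ carries it to a short exact sequence of $\Ind(\F)$.

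Second, I would check that $\Ind(h)$ is fully faithful. This is immediate from the Hom-formula \eqref{indhom}: for $X=\limi X_i$ and $Y=\limYj$ both $\Hom_{\Ind(\A)}(X,Y)$ and $\Hom_{\Ind(\F)}(\Ind(h)(X),\Ind(h)(Y))$ are computed as $\varprojlim_i\varinjlim_j\Hom(X_i,Y_j)$, and these agree because $h$ is fully faithful, i.e. $\Hom_{\A}(X_i,Y_j)=\Hom_{\F}(hX_i,hY_j)$. Thus $\Ind(\A)$ sits as a full subcategory of the abelian category $\Ind(\F)$.

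The substantive step, and the one I expect to be the main obstacle, is that $\Ind(\A)$ is closed under extensions in $\Ind(\F)$. Suppose $0\to A'\to X\to A''\to 0$ is a short exact sequence of $\Ind(\F)$ with $A',A''\in\Ind(\A)$; I must show $X\in\Ind(\A)$. Using straightification of the two maps (Lemma \eqref{straight}), straightification of the monomorphism $A'\hrar X$ (Proposition \eqref{monostr}), and passage to a cofinal subsystem (Lemma \eqref{cofinal}), I would arrange a common filtering index $I$ for which the sequence is straight, given levelwise by $A'_i\hrar X_i\to A''_i$ with $A'_i,A''_i\in\A$ (the outer components may be taken in $\A$ precisely because $A'$ and $A''$ already lie in $\Ind(\A)$). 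Since the cokernel is a finite colimit, it commutes with the formal inductive limit by Propositions \eqref{indlimits} and \eqref{indcloslimits}; hence $\coker(A'_i\hrar X_i)$ represents $A''$, and after refining $I$ once more I may assume $\coker(A'_i\hrar X_i)\cong A''_i$, so that each $0\to A'_i\to X_i\to A''_i\to 0$ is short exact in $\F$. As $\A$ is closed under extensions in $\F$, every $X_i$ lies in $\A$, whence $X=\limi X_i\in\Ind(\A)$. The delicate point throughout is the simultaneous straightening into a levelwise short exact sequence whose outer terms have components in $\A$, which is where Proposition \eqref{monostr} and the exactness of filtered colimits are essential; reconciling the two representations of $A''$ levelwise is the part requiring care.

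Finally, having exhibited $\Ind(\A)$ as a full, extension-closed subcategory of the abelian category $\Ind(\F)$, Corollary \eqref{extensionclosed} furnishes the exact structure, whose admissible short exact sequences are those of $\Ind(\F)$ with terms in $\Ind(\A)$; by Lemma \eqref{cokernels} these coincide with the sequences straightenable to levelwise admissible short exact sequences of $\A$, so the structure is the natural one inherited from $\E$. The statement for $\Pro(\A)$ then follows by applying the entire argument to $\A^{op}$, using $\Pro(\A)=\Ind(\A^{op})^{op}$ together with the self-duality of the exact-category axioms.
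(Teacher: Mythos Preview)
Your proposal is correct and follows essentially the same route as the paper: embed $\Ind(\A)$ into the abelian category $\Ind(\F)$ via the Quillen envelope, straightify a short exact sequence of $\Ind(\F)$ into levelwise short exact sequences in $\F$ (Proposition \eqref{monostr}), and invoke extension-closure of $\A$ in $\F$ to conclude each middle term lies in $\A$. You are somewhat more explicit than the paper in checking full faithfulness of $\Ind(h)$ and in flagging the simultaneous straightening as the delicate point, and you handle $\Pro(\A)$ by duality where the paper simply repeats the argument, but the substance is identical.
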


\begin{proof} 
Let us consider the abelian envelope $\F$ of $(\A, \E)$ (see Theorem \eqref{embedding}).  It is clear that we have an embedding of $\Ind(\A)$ in the category $\Ind(\F)$, which is abelian by Proposition \eqref{indproabelian}.  We  show that $\Ind(\A)$ is closed under extensions in $\Ind(\F)$. This will give at once an exact structure.

\par

Thus, let be $X\stackrel{m}\hrar Y\stackrel{e}\epi Z$ a short exact sequence of $\Ind(\F)$, where $X$ and $Z$ are in $\Ind(\A)$. We shall prove that $Y\in\Ind(\A)$. Lemma \eqref{monostr}  allows us to straightify the above short exact sequence. We thus obtain, in components, the following diagram:
$$
\xymatrix{ 
... \ar[r] & X_{i-1} \ar[d]_{m_{i-1}}\ar[r] &X_{i}\ar[d]_{m_i} \ar[r] &X_{i+1} \ar[d]_{m_{i+1}}\ar[r]&... \\
... \ar[r] & Y_{i-1} \ar[r]\ar[d]_{e_{i-1}} &Y_i \ar[r]\ar[d]_{e_i} &Y_{i+1} \ar[r]\ar[d]_{e_{i-1}}&... \\
... \ar[r] & Z_{i-1} \ar[r] &Z_{i} \ar[r] &Z_{i+1} \ar[r]&... \\
}
$$
In this diagram, for each index $i$ each column is a short exact sequence of $\F$, with $X_i, Z_i$ in $\A$. Since $\A$ is closed under extensions in $\F$ it follows that $Y_i\in\A$ for all $i$. Thus, $Y=\limYi$ is in $\Ind(\A)$, which is thus also closed under extensions in $\Ind(\F)$, and the statement follows. The same argument works for $\Pro(\A)$. 
\end{proof}

As a corollary, we have the structure of admissible short exact sequences of $\Ind(\A), \Pro(\A)$ and the straightification of admissible mono/epimorphisms of these categories:

\begin{cor}\label{straight2}
Let $m: X\hrar Y$ be an admissible monomorphism in the category $\Ind(\A)$. Then, it is possible to represent $X=\limXi$, $Y=\limYi$ through the same filtering category $I$ in such a way that $m$  can be represented by a system of admissible monomorphisms of $\A$: $\{m_i:X_i\hrar Y_i\}$, as in Lemma \eqref{straight}. Similarly for the admissible monomorphisms of $\Pro^a(\A)$ and for admissible epimorphisms.
\end{cor}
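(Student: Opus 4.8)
The plan is to pull everything back to the abelian envelope $\F$ and reduce the claim to the component-wise criterion of Lemma \eqref{cokernels}. The starting point is the description of the exact structure furnished by the preceding proposition together with Corollary \eqref{extensionclosed}: an admissible monomorphism $m\colon X\hrar Y$ in $\Ind(\A)$ is precisely a monomorphism of the abelian category $\Ind(\F)$ whose cokernel, formed in $\Ind(\F)$, again lies in $\Ind(\A)$. Thus $m$ sits in a short exact sequence
$$0\to X\xrar{m} Y\xrar{e} Z\to 0$$
of $\Ind(\F)$ with all three terms $X,Y,Z\in\Ind(\A)$, where $Z=\coker(m)$ belongs to $\Ind(\A)$ exactly because $\Ind(\A)$ is closed under extensions in $\Ind(\F)$.

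Next I would straightify this entire sequence over a single filtering index category $I$, just as in the proof of the preceding proposition. Applying Proposition \eqref{monostr} represents $m$ as a system of monomorphisms $\{m_i\colon X_i\hrar Y_i\}$ of $\F$ with $X=\limXi$ and $Y=\limYi$; since $X\in\Ind(\A)$ its straightified components may be taken in $\A$. Because cokernel is a finite colimit, Proposition \eqref{indlimits} guarantees it is computed pointwise, so over the \emph{same} index $I$ we get $Z=\underset{i}{``\varinjlim"}Z_i$ with $Z_i=\coker(m_i)$, and each $Z_i\in\A$ as $Z\in\Ind(\A)$. Consequently, for every $i$ there is a short exact sequence $0\to X_i\xrar{m_i} Y_i\xrar{e_i} Z_i\to 0$ in $\F$ with $X_i,Z_i\in\A$, whence $Y_i\in\A$ by closure of $\A$ under extensions.

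The concluding observation is that each $m_i$ is in fact an \emph{admissible} monomorphism of $\A$: it is a monomorphism of $\F$ whose source $X_i$ and whose cokernel $\coker(m_i)=Z_i$ both lie in $\A$, so Lemma \eqref{cokernels} applies directly and yields admissibility. Hence $\{m_i\colon X_i\hrar Y_i\}$ is the required representation of $m$ by admissible monomorphisms of $\A$, carried by the common index category $I$ of Lemma \eqref{straight}.

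For admissible epimorphisms one argues dually, beginning from a short exact sequence $0\to X\to Y\xrar{e} Z\to 0$ with $X\in\Ind(\A)$, straightifying to obtain $e_i\colon Y_i\epi Z_i$ with kernel $X_i\in\A$, and invoking the dual half of Lemma \eqref{cokernels}. The $\Pro$-versions are verbatim the same, with $\Ind(\F)$ replaced by the abelian category $\Pro(\F)$ (Proposition \eqref{indproabelian}), $\Ind(\A)$ by $\Pro(\A)$, and the straightification of Proposition \eqref{monostr} by its $\Pro$ analogue. The one genuinely non-formal point, and the step I expect to be the main obstacle, is ensuring that the straightification of $m$ and that of its cokernel are realized over one and the same index category $I$; this is what forces the appeal to the pointwise computation of finite colimits in Proposition \eqref{indlimits}, rather than straightifying $X$, $Y$ and $Z$ independently.
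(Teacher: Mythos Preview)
Your approach matches the paper's: the corollary is stated there without proof, as an immediate consequence of the preceding proposition's argument, which straightified the short exact sequence $0\to X\to Y\to Z\to 0$ of $\Ind(\F)$ into componentwise short exact sequences $X_i\hrar Y_i\epi Z_i$ in $\F$ with $X_i,Z_i\in\A$; once that is granted, Lemma \eqref{cokernels} (or directly Theorem \eqref{embedding}) gives admissibility of each $m_i$ exactly as you argue.

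One step, however, needs more care than you give it. The inference ``each $Z_i\in\A$ as $Z\in\Ind(\A)$'' is not valid as written: membership of $Z$ in $\Ind(\A)$ only says that $Z$ admits \emph{some} representation with components in $\A$, not that the \emph{particular} representation $Z_i=\coker(m_i)$ produced by pointwise computation via Proposition \eqref{indlimits} has this property. The same objection applies to your claim that the $X_i$ ``may be taken in $\A$'' after invoking Proposition \eqref{monostr}, whose proof constructs a specific representation (as kernels in $\F$) without reference to $\A$. The correct maneuver---implicit in the paper's proof of the preceding proposition and glossed over there as well---is to begin with representations of $X$, $Y$, $Z$ having components in $\A$, straightify both morphisms over a common index via Lemma \eqref{straight} (which preserves the components), and then argue that the resulting componentwise sequences can be made short exact in $\F$. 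You correctly sense that the common-index issue is the crux, but the actual obstacle is reconciling ``components lie in $\A$'' with ``componentwise short exact,'' not merely aligning the index categories.
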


Also, we get:

\begin{cor} All the iterated ind/pro-categories obtained from  $\A$ are exact. 
\end{cor}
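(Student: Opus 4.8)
The plan is to proceed by induction on the number $n$ of $\Ind$ and $\Pro$ operations used in building the iterated category. By an iterated ind/pro-category obtained from $\A$ I mean a category of the form $W_1 W_2 \cdots W_n(\A)$, where each $W_k$ is either $\Ind$ or $\Pro$. The base case $n = 0$ is $\A$ itself, which is exact by hypothesis, and the case $n = 1$ is precisely the content of the preceding proposition, which asserts that $\Ind(\A)$ and $\Pro(\A)$ carry natural exact structures.

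For the inductive step, the key observation is that the preceding proposition is stated for an \emph{arbitrary} exact category, and not merely for $\A$; this is the whole point. Any iterated category of word length $n+1$ is of the form $\Ind(\B)$ or $\Pro(\B)$, where $\B = W_2 \cdots W_{n+1}(\A)$ is itself an iterated ind/pro-category, now of word length $n$. By the inductive hypothesis $\B$ carries a natural exact structure, so the preceding proposition applies verbatim with $\B$ in place of $\A$ and equips $\Ind(\B)$ and $\Pro(\B)$ with exact structures. This closes the induction and proves the corollary.

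What makes the induction legitimate is that exactness is an \emph{intrinsic} property, stable under the two constructions. By Corollary \eqref{extensionclosed}, an exact category $\B$ is the same thing as a full, extension-closed subcategory of an abelian category $\F_{\B}$, and the argument of the preceding proposition realizes $\Ind(\B)$ (resp. $\Pro(\B)$) as an extension-closed subcategory of the abelian category $\Ind(\F_{\B})$ (resp. $\Pro(\F_{\B})$), which is abelian by Proposition \eqref{indproabelian}. Thus each stage again outputs a category of the form ``extension-closed in an abelian category'', ready to be fed into the next step. I do not expect a genuine obstacle here, since the mathematical content is already carried by the preceding proposition; the only point I would state explicitly is the bookkeeping that the exact structure obtained on, say, $\Ind\Pro(\A)$ coincides with the one inherited as an extension-closed subcategory of the abelian envelope of $\Pro(\A)$, which is exactly what the intrinsic characterization of Corollary \eqref{extensionclosed} secures.
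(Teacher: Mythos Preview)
Your induction argument is correct and is exactly what the paper has in mind: the corollary is stated without proof, as an immediate consequence of iterating the preceding proposition, which applies to an arbitrary exact category. There is nothing more to add.
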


In particular, $\Ind\Pro(\A)$ and $\Pro\Ind(\A)$ are exact categories.
\par

\section {The Beilinson category}

We introduce here, following Beilinson, an alternative definition of locally 
 compact objects in a category $\A$, provided that $\A$ has the structure of an exact category. The resulting category of locally compact objects in $\A$ will be called the Beilinson category associated to $\A$ and denoted $\limA$. We also study its relation with the Kato category, $\kappa (\C)$, when $\C$ is exact.

\subsection{The Beilinson category of an exact category $(\A, \E)$}
We review, from our perspective of iterated ind/pro-objects, the construction of the category $\limA$ of Beilinson.
\par
Let $(\A, \E)$ be an exact category.  We refer to sect. \eqref{indproitloc} for the definition of the preorder  $\Pi$ and the terminology there introduced.

\begin{definition} 
Let be $X$ an object of $\AP$. We say that $X$ is {\it admissible}, whenever for any $i\leq j\leq k$, the corresponding sequence $X_{ij}\rar X_{ik}\rar X_{jk}$ is in $\E$. \ We will denote by $\APa$ \ the full subcategory of $\AP$ of the admissible objects.
\end{definition}

We also define $\EP$ as the class of sequences of objects of $\AP$, $\{ X\rar Y\rar Z\}$, such that, for all $i\leq j$ the induced sequences $X_{i,j}\rar Y_{i,j}\rar Z_{i,j}$ are admissible short exact sequences of $\A$.

\begin{lm}\label{lemma} 
The category $(\AP, \EP)$ is an exact category.
\end{lm}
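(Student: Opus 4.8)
The plan is to realize $(\AP, \EP)$ as a full subcategory, closed under extensions, of an abelian category, and then to invoke the characterization of exact categories given in Corollary \eqref{extensionclosed}. This reduces the whole statement to two pointwise verifications over the preorder $\Pi$.

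First I would pass to the abelian envelope $\F$ of $(\A, \E)$ provided by Theorem \eqref{embedding}, together with the Quillen embedding $h\colon\A\hrar\F$. Composition with $h$ induces a functor $\BiFun(\Pi,\A)\rar\BiFun(\Pi,\F)$ on the corresponding bifunctor categories; since $h$ is fully faithful and $\Pi$ is a small preorder, this induced functor is again fully faithful (a natural transformation in $\BiFun(\Pi,\F)$ between the images of $X,Y$ is a compatible family of morphisms $hX_{ij}\rar hY_{ij}$, each of which is $h$ of a unique morphism $X_{ij}\rar Y_{ij}$, and naturality transfers because $h$ is faithful). Thus $\AP$ is identified with a full subcategory of $\BiFun(\Pi,\F)$. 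The target is abelian: it is a standard fact that the category of functors from a small category into an abelian category is abelian, with kernels, cokernels, and hence exactness of sequences all computed pointwise, object by object over $\Pi$.

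Next I would show that $\AP$ is closed under extensions in $\BiFun(\Pi,\F)$. Let $0\rar X\rar Y\rar Z\rar 0$ be a short exact sequence in $\BiFun(\Pi,\F)$ with $X,Z\in\AP$. Because exactness is pointwise, for every $(i,j)\in\Pi$ the sequence $0\rar X_{ij}\rar Y_{ij}\rar Z_{ij}\rar 0$ is short exact in $\F$ with $X_{ij},Z_{ij}\in\A$; since $\A$ is closed under extensions in $\F$ by Theorem \eqref{embedding}, we obtain $Y_{ij}\in\A$ for all $(i,j)$, so $Y$ is a functor $\Pi\rar\A$ and lies in $\AP$. By Corollary \eqref{extensionclosed}, $\AP$ therefore carries an exact structure induced from $\BiFun(\Pi,\F)$, whose admissible short exact sequences are exactly those sequences of $\AP$ that become short exact in $\BiFun(\Pi,\F)$. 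It then remains to check that this induced class coincides with $\EP$: a sequence $X\rar Y\rar Z$ in $\AP$ is short exact in $\BiFun(\Pi,\F)$ iff $0\rar X_{ij}\rar Y_{ij}\rar Z_{ij}\rar 0$ is short exact in $\F$ for each $(i,j)$, and by the defining property of the Quillen embedding in Theorem \eqref{embedding} such a sequence of objects of $\A$ is short exact in $\F$ exactly when it is an admissible short exact sequence of $\A$; this is precisely the condition defining $\EP$.

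I do not expect a serious obstacle here; the argument is a pointwise transfer of the extension-closed description of exact categories through the functor category $\BiFun(\Pi,-)$. The only points requiring genuine care are the full faithfulness of the induced embedding of bifunctor categories and the fact that short exactness in $\BiFun(\Pi,\F)$ is computed pointwise, since both the closure under extensions and the identification of $\EP$ hinge on being able to test everything object by object over $\Pi$. Both of these reduce to standard properties of functor categories valued in an abelian category, so the real content of the lemma is packaged into Theorem \eqref{embedding} and Corollary \eqref{extensionclosed}.
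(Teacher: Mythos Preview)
Your argument is correct. The embedding of $\AP$ into the abelian functor category $\BiFun(\Pi,\F)$, the pointwise verification of closure under extensions, and the identification of the induced exact structure with $\EP$ via Theorem \eqref{embedding} all go through without difficulty; the only point you leave implicit is that $\AP$ is additive, which is immediate since it is a functor category with values in an additive category.

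The paper, however, does not argue this way: it simply asserts that the proof is an adaptation of Waldhausen's verification that the categories $S_n(\A)$ are exact, and refers the reader to \cite{w}. That approach amounts to a direct check of Quillen's axioms for $\EP$, carried out indexwise. Your route through the abelian envelope and Corollary \eqref{extensionclosed} is both shorter and more self-contained relative to the paper, and in fact it is exactly the strategy the paper itself employs later to prove exactness of $\Ind(\A)$, $\Pro(\A)$, and $\limA$. What the Waldhausen reference buys is a proof that does not rely on the existence of an abelian envelope and hence avoids any set-theoretic worries about the size of $\F$; what your approach buys is uniformity with the rest of the paper and no external dependence.
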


\begin{proof} The proof of this lemma it is essentially an adaptation of the proof, in \cite{w} of the exactness of the categories $S_n(\A)$, to which we refer the reader.
\end{proof} 

\begin{lm}
The category $\APa$  has an induced structure of exact subcategory of $\AP$,  and there is an embedding $\A\hrar \APa$.  
\end{lm}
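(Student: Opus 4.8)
The plan is to realize $\APa$ as a \emph{fully exact subcategory} of the exact category $(\AP,\EP)$ of Lemma \eqref{lemma}: once we know that $\APa$ is a full additive subcategory of $\AP$ closed under extensions, the general construction of the induced exact structure (whose admissible short exact sequences are those sequences of $\EP$ all of whose terms are admissible objects) applies verbatim. Additivity is immediate: the zero functor is admissible, and since the three-term sequence attached to a biproduct $X\oplus X'$ at each triple $i\leq j\leq k$ is the direct sum of the corresponding sequences of $X$ and of $X'$, and a direct sum of admissible short exact sequences of $\A$ is again admissible, $\APa$ is closed under biproducts. So the whole content lies in closure under extensions.

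Let $X\hrar Y\epi Z$ be a sequence in $\EP$ with $X,Z\in\APa$; I must show that for each $i\leq j\leq k$ the row $Y_{ij}\to Y_{ik}\to Y_{jk}$ lies in $\E$. The crux is a vanishing observation: taking the degenerate triples $i\leq j=k$ and $i=j\leq k$ in the admissibility condition forces, for any admissible object, $X_{jj}=0$, since the sequences $X_{ij}\xrar{\id}X_{ij}\to X_{jj}$ and $X_{jj}\to X_{jk}\xrar{\id}X_{jk}$ must be short exact. Applying this to $X$ and $Z$ and using exactness of the column $0\to X_{jj}\to Y_{jj}\to Z_{jj}\to 0$ gives $Y_{jj}=0$; since the structure map $Y_{ij}\to Y_{jk}$ factors through $Y_{jj}$ via $(i,j)\leq(j,j)\leq(j,k)$ in $\Pi$, the composite $Y_{ij}\to Y_{ik}\to Y_{jk}$ vanishes, so the middle row is a complex. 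Now pass to the abelian envelope $\F$ of $\A$ (Theorem \eqref{embedding}): for the fixed triple the three columns are short exact in $\F$ (they belong to $\EP$), the top and bottom rows are short exact in $\F$ (because $X,Z$ are admissible), and the middle row is a complex. The $3\times3$ lemma in the abelian category $\F$ then yields that $0\to Y_{ij}\to Y_{ik}\to Y_{jk}\to 0$ is short exact in $\F$; its three terms lie in $\A$, so by Theorem \eqref{embedding} (equivalently Corollary \eqref{extensionclosed}) it is an admissible short exact sequence of $\A$. As the triple was arbitrary, $Y\in\APa$, proving closure under extensions.

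For the embedding $\A\hrar\APa$, I send $a\in\A$ to the functor $X^a\colon\Pi\to\A$ with $X^a_{ij}=a$ on the region $R=\{(i,j)\in\Pi : i\leq 0<j\}$ and $X^a_{ij}=0$ elsewhere, all structure maps being identities within $R$ and the unique (zero) maps otherwise, and a morphism $f$ to the transformation equal to $f$ on $R$ and $0$ off $R$. One checks that whenever a morphism of $\Pi$ has source and target in $R$ all intermediate objects lie in $R$ (so the assignment is functorial), and that for every triple $i\leq j\leq k$ exactly one of the patterns $(0,0,0)$, $(0,a,a)$, $(a,a,0)$ occurs, so $X^a$ is a well-defined admissible object. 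Since $R$ is upward directed, naturality forces the components of a transformation $X^a\to X^{a'}$ over $R$ to coincide while those off $R$ vanish, whence $\Hom_{\APa}(X^a,X^{a'})=\Hom_{\A}(a,a')$; thus the functor is fully faithful, and as it carries admissible short exact sequences of $\A$ into $\EP$ it is exact.

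The main obstacle is the middle step. The form of the nine lemma that recovers the middle row from the two outer rows genuinely requires the middle row to be a complex, and this is \emph{not} automatic for an arbitrary extension $Y$ (there exist commutative $3\times 3$ diagrams with exact columns and exact outer rows whose middle row has nonzero composite). The device that rescues the argument is the vanishing of the diagonal $Y_{jj}=0$, forced by the degenerate triples together with exactness of the columns; without this observation the closure under extensions, and hence the inherited exact structure on $\APa$, would fail.
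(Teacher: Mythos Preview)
Your proof is correct and follows essentially the same approach as the paper: closure under extensions via the $3\times 3$ lemma in the abelian envelope (the paper merely says this step ``is clear'', while you supply the details, including the key observation that $Y_{jj}=0$ forces the middle row to be a complex), and the embedding of $\A$ via the ``single nonzero block'' functor. Your description of the embedding, with nonzero region $\{i\leq 0<j\}$, is in fact more precise than the paper's terse ``$X_{i,-1}=X_{1,j}=0$'', which taken literally would place $a$ at $(i,0)$ and fail admissibility at the triple $-1\leq 0\leq 1$.
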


\begin{proof}
It is clear that if we have an admissible short exact sequence of $\EP$, whose end terms are admissible, also the middle term is admissible. Therefore, $\APa$ is closed under extensions in $\AP$, and so it is a (fully) exact subcategory. The embedding of the claim is described on the objects by sending every object of $\A$ into the one having $X_{i,-1}=X_{1,j}=0.$ 
\end{proof}

Notice that the category $\APa$ can be seen as a limiting case  of the Waldhausen categories $S_n(A)$ defined in \cite{w}, when $n$ ``goes to infinity". 
\vspace{0.3cm}

Let be $X\in\APa$, $\phi:\Zee\rar\Zee$ a bicofinal functor, and $\tilde\phi$ the induced map $\Pi\rar\Pi$ of Equation \eqref{tildephi}. Then it is clear that the object $X\cdot\tilde\phi$ is in $\APa$. We can thus define the class of morphisms:
$$
U_a:=\{ X\cdot\tilde\phi_0\rar X\cdot\tilde\phi_1, \text{ such that } X\in\APa, \  \phi_0, \phi_1: \Zee\rar\Zee \text { bicofinal and } \phi_0\leq\phi_1\}.
$$
With the same proof of Prop.\eqref{Sslocal} the following is proved

\begin{prop}
$U_a$ is a localizing class of morphisms in $\APa$.
\end{prop}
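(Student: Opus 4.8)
The plan is to observe that $U_a$ is nothing but the restriction to the full subcategory $\APa$ of the localizing system $U$ of Proposition \eqref{Sslocal} (applied with $\C=\A$, so that it is a localizing system in $\AP$), and to argue that the three localization axioms (a), (b), (c) verified there can be repeated verbatim, provided every object and morphism produced in their verification stays inside $\APa$. The only fact needed for this is the remark made just before the statement: for $X\in\APa$ and $\phi\colon\Zee\rar\Zee$ bicofinal, the object $X\cdot\tilde\phi$ is again admissible. Indeed, for $i\le j\le k$ the sequence $(X\cdot\tilde\phi)_{ij}\rar(X\cdot\tilde\phi)_{ik}\rar(X\cdot\tilde\phi)_{jk}$ is exactly $X_{\phi(i)\phi(j)}\rar X_{\phi(i)\phi(k)}\rar X_{\phi(j)\phi(k)}$, which lies in $\E$ because $\phi(i)\le\phi(j)\le\phi(k)$ and $X$ is admissible. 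Hence precomposition with any $\tilde\phi$ preserves $\APa$, so every morphism of $U_a$ has both source and target in $\APa$ and is a genuine morphism of that subcategory.

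Granting this, axiom (a) is immediate: taking $\phi_0=\phi_1=\id$ shows $1_X\in U_a$ for each $X\in\APa$, and the composite of $X\cdot\tilde\phi_0\rar X\cdot\tilde\phi_1$ and $X\cdot\tilde\phi_1\rar X\cdot\tilde\phi_2$ (with $\phi_0\le\phi_1\le\phi_2$) is the morphism $X\cdot\tilde\phi_0\rar X\cdot\tilde\phi_2$ of $U_a$, since $\phi_0\le\phi_2$. For the Ore condition (b), given $f\colon Y\rar X$ in $\APa$ and $s\colon Y\rar Y\cdot\tilde\phi$ in $U_a$ (one may assume $\id\le\phi$), I would set $t\colon X\rar X\cdot\tilde\phi$ and $g:=f\cdot\tilde\phi$; the resulting square commutes by naturality of $f$, exactly as in Proposition \eqref{Slocal}, and $t\in U_a$ precisely because $X\cdot\tilde\phi$ is admissible.

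The one step that genuinely invokes the argument of Proposition \eqref{Sslocal} is the cancellation axiom (c). Given $f,g\colon X\rar Y$ in $\APa$ together with $s\in U_a$ arising from a bicofinal $\phi$ with $\id\le\phi$ and $sf=sg$, I would build the bicofinal $\psi\le\id$ exactly as in the proof of Proposition \eqref{Sslocal} (using $\lim_{i\to-\infty}\phi(i)=-\infty$ to guarantee that $\psi$ is well defined and bicofinal on all of $\Zee$), obtain the morphism $t\colon X\cdot\tilde\psi\rar X$ of $U_a$, and check that $f\cdot t=g\cdot t$ by feeding the componentwise naturality of $f_{i,j}$ and $g_{i,j}$ into the diagrams analogous to \eqref{natural}. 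Once more $t\in U_a$ holds because $X\cdot\tilde\psi\in\APa$. The main (and essentially the only) obstacle is therefore bookkeeping: confirming that $\psi$ is bicofinal on $\Zee$ and that the roof-completion diagrams close up using naturality in the two indices $i,j$ simultaneously. Since admissibility is stable under $-\cdot\tilde\phi$, no additional difficulty is introduced by working in $\APa$ rather than in $\AP$, and the proof of Proposition \eqref{Sslocal} transfers without change.
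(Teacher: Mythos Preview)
Your proposal is correct and follows essentially the same approach as the paper, which simply states that the result is proved ``with the same proof of Prop.~\eqref{Sslocal}.'' You have merely spelled out explicitly what the paper leaves implicit, namely that the key observation making the argument of Proposition~\eqref{Sslocal} go through inside $\APa$ is the stability of admissibility under precomposition with any $\tilde\phi$.
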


\begin{definition}\label{limA} The {\it Beilinson category} of an exact category $(\A, \E)$, is the category $\limA:=\APa[U_a^{-1}]$.
\end{definition}

We shall call the objects of the category $\limA$ also {\it generalized Tate 
spaces} relative to the exact category $\A$.
\begin{definition}  The category $\Ind^a(\A)$ is the full subcategory of $\Ind(\A)$ whose objects have structure morphisms which are admissible monomorphism. Similarly, $\Pro^a(\A)$ is the full subcategory of $\Pro(\A)$ whose objects have structure morphisms which are admissible epimorphisms. We call $\Ind^a(\A)$ and $\Pro^a(\A)$, respectively, the categories of {\it strictly admissible ind-objects} and the {\it strictly admissible pro-objects} of $\A$.
\end{definition}

\begin{definition}
The category $\IP^a(\A)$, of the {\it strictly admissible straightified ind/pro 
objects over $\A$ of countable type} is the full subcategory of $\IP(\A)$, whose objects are 
formal limits $\limXij$ for bifunctors $X:I^o\times J\rar\A$, such that for 
each $i\leq i'$ and all $j$ the morphism $X_{i',j}\rar X_{i,j}$ is an admissible 
epimorphism, and for each $j\leq j'$ and all $i$, the morphism 
$X_{i,j}\rar X_{i,j'}$ is an admissible monomorphism.
\end{definition}

Then, with the same proof used to prove Theorem \eqref{indprolocal} we prove the 

\begin{theorem}\label{embedlima}
There is an embedding of $\limA$ as a full subcategory of $\IP^a(\A)$, given on the objects by 
\[
X=(X_{i,j})\mapsto \limXij.
\]
\end{theorem}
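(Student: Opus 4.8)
The plan is to reproduce the construction of Theorem \eqref{indprolocal} with $\C=\A$, restricting the functor $\Phi$ to the admissible subcategory $\APa$. The structural remark that makes this work is that $U_a$ is precisely the restriction of $U$ to $\APa$, and that every reindexing $X\cdot\tilde\phi$ of an admissible object by a bicofinal $\phi$ is again admissible (as noted just before Definition \eqref{limA}). Hence the verification that $U_a$ is a localizing class, and the resulting roof description of $\Hom_{\limA}(X,Y)$, take place entirely inside $\APa$ and are formally identical to those for $U$ in $\AP$ established following Proposition \eqref{Sslocal}, with every object and every intermediate reindexing replaced by an admissible one. Consequently most of the argument is inherited verbatim from Theorem \eqref{indprolocal}, and only two points genuinely require attention: that the functor carries $\limA$ into the subcategory $\IP^a(\A)$, and that fullness and faithfulness survive the passage to admissible objects.

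First I would check that $\Phi$ sends an admissible object $X=(X_{i,j})$ into $\IP^a(\A)$. By the definition of $\APa$, for every $i\leq j\leq k$ the sequence $X_{i,j}\rar X_{i,k}\rar X_{j,k}$ is an admissible short exact sequence of $\E$. Its two arrows are by definition the admissible monomorphism and the admissible epimorphism of that sequence, so reading them off shows that every transition map of $\Phi(X)$ in the ind-direction (second index increasing) is an admissible monomorphism and every transition map in the pro-direction (first index) is an admissible epimorphism. Thus the inner pro-systems are objects of $\Pro^a(\A)$ and the outer ind-system is strictly admissible, so $\Phi(X)=\limXij$ lies in $\IP^a(\A)$. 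Since $\Phi$ already sends each $u\in U_a\subseteq U$ to an isomorphism, it descends through the localization to a functor $\hat\Phi\colon\limA\rar\IP^a(\A)$, given on objects by the stated formula.

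It then remains to show that $\hat\Phi$ is fully faithful; its essential image is then automatically a full subcategory and the theorem follows. Faithfulness is inherited from Theorem \eqref{indprolocal}: from $\hat\Phi(f)=\hat\Phi(g)$ the diagram chase there produces a single bicofinal $\theta\colon\Zee\rar\Zee$ after which $f$ and $g$ agree as roofs, and since $Y\cdot\tilde\theta$ is again admissible this is an equality of $U_a$-roofs, whence $f=g$ in $\limA$. For fullness, given $g\colon\Phi(X)\rar\Phi(Y)$ in $\IP^a(\A)$ with $X,Y\in\APa$, straightification of morphisms in $\IP(\A)$ represents $g$ by a compatible family $X_{i,j}\rar Y_{\phi(i),\phi(j)}$, i.e.\ by a natural transformation $X\rar Y\cdot\tilde\phi$; as both $X$ and $Y\cdot\tilde\phi$ lie in the full subcategory $\APa$, this natural transformation is a morphism of $\APa$ and hence defines a $U_a$-roof mapping to $g$. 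I expect this last step to be the main obstacle: one must ensure that a morphism between two admissible ind/pro objects can always be straightened through admissible components, with no non-admissible intermediate stage. This is what saves the argument, since straightification here only reindexes the given admissible objects $X$ and $Y$ by bicofinal maps, and bicofinal reindexing preserves admissibility. Finally, one should not expect $\hat\Phi$ to be essentially surjective — an object of $\IP^a(\A)$ need only have admissible transition maps, whereas an admissible object of $\APa$ satisfies the stronger exactness of all the triangles $X_{i,j}\rar X_{i,k}\rar X_{j,k}$ — which is exactly why the statement asserts an embedding as a full subcategory rather than an equivalence.
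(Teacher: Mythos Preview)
Your proposal is correct and follows exactly the approach indicated in the paper, which simply states that Theorem \eqref{embedlima} is proved ``with the same proof used to prove Theorem \eqref{indprolocal}.'' In fact you supply more detail than the paper does, explicitly verifying that admissibility of $X$ forces $\Phi(X)$ to land in $\IP^a(\A)$ and correctly observing why only full faithfulness (not essential surjectivity) carries over.
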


\subsection{Comparison between the constructions of Beilinson and Kato}

Let $(\A,\E)$ be an exact category.  Our goal in this section is to introduce an ``admissible version" of the Kato category and to prove that it
provides an alternate description of $\limA$. In fact, our considerations will 
also make $\limA$  appear as the ``admissible version" of $\prelim(\A)$.

\begin{definition}\label{admkato}
The full subcategory of $\IP^a(\A)$, whose  squares are cartesian (hence cocartesian), will be called the {\it admissible Kato category} of the exact category $(\A, \E)$, and denoted $\kappa^a(\A)$.
\end{definition}
Thus, $\kappa^a(\A)$ is a full subcategory of $\IP^a(\A)$. 

\begin{lm}\label{objlima}
Let be $X\in Ob\ \limA$ and $i+1\leq j$. Then, the square
$$
\xymatrix{ 
X_{i,j}\ar[d]_{e_1}
\ar[r]^{m_1}
& X_{i, j+1}\ar[d]^{e_2} \\
X_{i+1, j}\ar[r]_{m_2}
& X_{i+1, j+1}
}$$
is admissible, cartesian and cocartesian.
\end{lm}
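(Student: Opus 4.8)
The plan is to verify the three properties in the order \emph{admissible}, \emph{cartesian}, \emph{cocartesian}, using the admissibility of $X$ together with Proposition \eqref{cartesian}. First note that the objects of $\limA=\APa[U_a^{-1}]$ are precisely the objects of $\APa$, so $X$ is an admissible bifunctor on $\Pi$, and the hypothesis $i+1\le j$ guarantees that all four vertices $(i,j),(i,j+1),(i+1,j),(i+1,j+1)$ lie in $\Pi$. I would first show the square is admissible. Applying the admissibility condition to the triple $i\le j\le j+1$ shows that $m_1\colon X_{i,j}\to X_{i,j+1}$ is an admissible monomorphism, since it is the monic term of the sequence $X_{i,j}\to X_{i,j+1}\to X_{j,j+1}$ in $\E$; the triple $i+1\le j\le j+1$ does the same for $m_2$, and here the hypothesis $i+1\le j$ is exactly what is needed. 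Dually, the triple $i\le i+1\le j$ exhibits $e_1\colon X_{i,j}\to X_{i+1,j}$ as the admissible epimorphism in $X_{i,i+1}\to X_{i,j}\to X_{i+1,j}$, and $i\le i+1\le j+1$ does the same for $e_2$. Thus the horizontal arrows are admissible monomorphisms and the vertical arrows admissible epimorphisms, so the square is admissible.

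Next I would prove it is cartesian. The two sequences just produced,
$$
0\to X_{i,i+1}\to X_{i,j}\xrightarrow{e_1}X_{i+1,j}\to 0,\qquad 0\to X_{i,i+1}\to X_{i,j+1}\xrightarrow{e_2}X_{i+1,j+1}\to 0,
$$
are admissible short exact sequences with the \emph{same} kernel $X_{i,i+1}$, and $m_1,m_2$ assemble them into a morphism of short exact sequences. By functoriality of $X$ the composite $X_{i,i+1}\to X_{i,j}\xrightarrow{m_1}X_{i,j+1}$ equals the structure map $X_{i,i+1}\to X_{i,j+1}$, which is the kernel inclusion of $e_2$; since that inclusion is monic, the induced map on kernels is the identity of $X_{i,i+1}$.

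I would then transfer the situation to the abelian envelope $\F$ of Theorem \eqref{embedding}. Under the Quillen embedding the two displayed sequences become genuine short exact sequences of $\F$, so we obtain a morphism of short exact sequences in the abelian category $\F$ whose left-hand (subobject) vertical map is an isomorphism. A standard diagram chase then applies: comparing $X_{i,j}$ with the pullback $X_{i,j+1}\times_{X_{i+1,j+1}}X_{i+1,j}$ produces an induced morphism of short exact sequences which is the identity on the common subobject and on the quotient, and the five lemma forces the comparison map to be an isomorphism. Hence a morphism of short exact sequences that is an isomorphism on subobjects has a cartesian ``quotient square'', so our square is cartesian in $\F$, and by Proposition \eqref{cartesian}(1) it is cartesian in $\A$.

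Finally, the square is admissible and cartesian, so Proposition \eqref{cartesian}(2) immediately yields that it is cocartesian, completing the proof. The only substantive work lies in the cartesianness: I expect the main obstacle to be phrasing the pullback computation cleanly, and the efficient route is precisely the morphism-of-short-exact-sequences argument carried out in $\F$. The key point is that Proposition \eqref{cartesian}(1) lets us test the pullback in the abelian envelope, where the five lemma is available, rather than struggling with limits directly in the exact category $\A$.
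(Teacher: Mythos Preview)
Your proof is correct and follows the same overall architecture as the paper: first verify admissibility from the defining exactness condition on $X$, then check cartesianness in the abelian envelope $\F$ via Proposition~\eqref{cartesian}(1), and finally deduce cocartesianness from Proposition~\eqref{cartesian}(2).

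The one genuine difference is in how cartesianness is established. You exploit that the vertical admissible epimorphisms $e_1,e_2$ share the common \emph{kernel} $X_{i,i+1}$, assemble the square into a morphism of short exact sequences that is the identity on subobjects, and invoke the five lemma against the pullback. The paper instead exploits that the horizontal admissible monomorphisms $m_1,m_2$ share the common \emph{cokernel} $X_{j,j+1}$: it augments the square on the right by $X_{j,j+1}$ and performs a direct element chase, showing that any $b\in X_{i,j+1}$ mapping into $\im(m_2)$ must lie in $\ker(e'_1)=\im(m_1)$ because $e'_1$ factors through $e'_2$. The two arguments are dual to one another; yours packages the diagram chase into the five lemma, while the paper's is a short hands-on verification. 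Either choice works equally well here, and both rely on the same essential observation that admissibility of $X$ forces the parallel maps in the square to have isomorphic (co)kernels.
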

\begin{proof} First, from the fact that $X$ is an admissible object, when $i\leq j\leq j+1$, the sequence $X_{i,j}\hrar X_{i, j+1}\epi X_{j,j+1}$ is an admissible short exact sequence. Hence $m_1$ (and similarly $m_2$) is an admissible monomorphism, while the consideration of the admissible short exact sequence
$X_{i,i+1}\hrar X_{i, j}\epi X_{i+1, j}$ corresponding to $i\leq i+1\leq j$ proves that $e_1$ and $e_2$ are admissible epimorphisms.
\par
We now show that the above square is cartesian and cocartesian. From Proposition \eqref{cartesian}, it will suffice to show that the square is cartesian in the abelian envelope.  For this aim, we complete the square above as follows:
$$
\xymatrix{ 
X_{i,j}\ar[dd]_{e_1}
\ar[r]^{m_1}
& X_{i, j+1}
\ar[rd]^{e'_{1}}
\ar[dd]^{e_2} \\
&& X_{j, j+1}
\\
X_{i+1, j}\ar[r]_{m_2}
& X_{i+1, j+1}
\ar[ru]_{e'_2} \\
}$$
In this diagram now both $(m_1, e'_1)$ and $(m_2, e'_2)$ are thus admissible short exact sequences. Further, we use the standard diagram-chase technique in abelian categories; in particular, we speak about ``elements" of 
objects. See \cite{ml}, Theorem VIII.4.3.
\par
Thus, let be $a$ and $b$  be elements of, respectively, $X_{i+1, j}$ and $X_{i, j+1}$, such that $m_2(a)=e_2(b)$. Since $e_1$ is surjective, the statement is proved if we can produce a (necessarily unique) preimage of $b$ in $X_{i,j}$. That is, if and only if $b\in\im(m_1)=\ker(e'_1)$. Thus, everything reduces to prove that $e'_1(b)=0$. But 
$e'_1=e'_2e_2$, so let's consider $e_2(b)$. It is $e_2(b)=m_2(a)$, hence $e_2(b)\in\im(m_2)=\ker(e'_2)$. It follows $0=e'_2e_2(b)=e'_1(b)$, as required. 
\par
Thus, the square is cartesian. Part 2) of proposition \eqref{cartesian} shows now that the square is also cocartesian.
\end{proof}

\begin{prop} There is an embedding of $\limA$ as a full subcategory of 
$\kappa^a(\A)$. 
\end{prop}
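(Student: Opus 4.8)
The plan is to exhibit the embedding by restricting the full and faithful embedding $\limA \hrar \IP^a(\A)$ of Theorem \eqref{embedlima} and checking that its image lands inside the full subcategory $\kappa^a(\A) \subset \IP^a(\A)$. Since $\kappa^a(\A)$ is by Definition \eqref{admkato} the full subcategory of $\IP^a(\A)$ whose squares are cartesian (hence cocartesian), and since the embedding of Theorem \eqref{embedlima} is already full and faithful onto its image in $\IP^a(\A)$, it suffices to verify a single object-level condition: for every $X \in \Ob\ \limA$, the associated ind/pro object $\limXij$ has all its defining squares cartesian. Once this is established, the restriction of the embedding automatically remains full and faithful (fullness and faithfulness are inherited when one passes to a full subcategory of the target containing the image), and the proposition follows.

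First I would recall that, under the embedding of Theorem \eqref{embedlima}, an object $X \in \limA$ is sent to $\limXij$ with $X$ an admissible bifunctor $\Pi \rar \A$. The squares of this ind/pro system are exactly the commutative squares
$$
\xymatrix{
X_{i,j}\ar[d]
\ar[r]
& X_{i, j+1}\ar[d] \\
X_{i+1, j}\ar[r]
& X_{i+1, j+1}
}
$$
indexed by $i+1 \leq j$ (together with the larger squares built from these, which reduce to the unit squares by pasting). The core of the argument is therefore to show each such unit square is cartesian. But this is precisely the content of Lemma \eqref{objlima}, which asserts that for $X \in \Ob\ \limA$ and $i+1 \leq j$, this very square is admissible, cartesian and cocartesian. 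Thus the object-level condition holds by direct appeal to Lemma \eqref{objlima}, and the image of $\limA$ lies in $\kappa^a(\A)$.

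The remaining point is that every square of the ind/pro system (not only the unit squares $X_{i,j} \to X_{i+1,j+1}$) is cartesian. This follows by the standard pasting lemma for pullbacks: an arbitrary square of the system, say with corners $X_{i,j}, X_{i',j}, X_{i,j'}, X_{i',j'}$ for $i' \geq i$ and $j' \geq j$, decomposes as a grid of unit squares, each of which is cartesian by Lemma \eqref{objlima}; since a rectangle tiled by cartesian squares is itself cartesian, the general square is cartesian as well. One should note that the admissibility hypotheses on $X$ guarantee the horizontal maps are admissible monomorphisms and the vertical maps admissible epimorphisms throughout the grid, so each intermediate square genuinely satisfies the hypotheses of Lemma \eqref{objlima}. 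The main obstacle, such as it is, is purely bookkeeping: confirming that the pasting of unit squares covers all squares of the system and that the admissibility of each intermediate map is preserved under the pasting, both of which follow from the admissibility of the bifunctor $X$. Having verified the object-level condition and fullness-faithfulness being inherited from Theorem \eqref{embedlima}, the embedding $\limA \hrar \kappa^a(\A)$ is established.
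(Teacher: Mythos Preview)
Your proposal is correct and follows essentially the same approach as the paper: invoke Lemma \eqref{objlima} to see that the image of the embedding from Theorem \eqref{embedlima} lands in $\kappa^a(\A)$, and inherit fullness and faithfulness from that theorem. The paper's own proof is terser and does not spell out the pasting argument for non-unit squares, but your added care there is harmless and the overall strategy is the same.
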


\begin{proof}
Lemma \eqref{objlima} shows the existence of a functor 
$i\colon\limA\to\kappa^a(\A)$, given on the objects by $i(X)=\limXij$. From Theorem \eqref{embedlima}, we know that $i$ is an embedding of a full subcategory.
\end{proof}
\begin{theorem}\label{charac} The embedding $i:\limA\hrar\kappa^a(\A)$ is an equivalence of categories.
\end{theorem}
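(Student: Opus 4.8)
We already have (from the preceding proposition) a fully faithful embedding $i\colon\limA\hrar\kappa^a(\A)$, so the only thing left to establish is that $i$ is \emph{essentially surjective}. That is, I must show that every object of $\kappa^a(\A)$ is isomorphic (in $\IP^a(\A)$, hence in $\kappa^a(\A)$) to an object coming from $\limA$. The key observation is the difference between the two categories: an object of $\limA$ is an \emph{admissible} bifunctor $X\colon\Pi\rar\A$, meaning that for all $i\leq j\leq k$ the sequence $X_{ij}\rar X_{ik}\rar X_{jk}$ lies in $\E$; whereas an object of $\kappa^a(\A)$ is a priori only required to have admissible monomorphisms in the $j$-direction, admissible epimorphisms in the $i$-direction, and all squares cartesian (equivalently, by Lemma \eqref{objlima} and Proposition \eqref{cartesian}, cocartesian). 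So the heart of the matter is to show that the cartesian/cocartesian square condition forces the three-term admissibility condition, at least after passing to an isomorphic representative.

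\textbf{First step: reduce admissibility to the cartesian condition.} Given $Y\in\kappa^a(\A)$ with bifunctor $Y_{i,j}$, I would show that the cartesian condition on the elementary squares, together with admissibility of the generating mono's and epi's, implies that for each fixed $i$ the sequence $Y_{i,j}\hrar Y_{i,j+1}\epi \mathrm{coker}$ has cokernel computable as $Y_{j,j+1}$-type data, and dually in the $i$-direction. Concretely, I would use that in $\kappa^a(\A)$ every square
$$
\xymatrix{
Y_{i,j}\ar[d]\ar[r] & Y_{i,j+1}\ar[d] \\
Y_{i+1,j}\ar[r] & Y_{i+1,j+1}
}
$$
is admissible, cartesian and cocartesian, exactly as in Lemma \eqref{objlima}. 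Iterating such elementary squares (and using Proposition \eqref{cartesian} to pass between $\A$ and its abelian envelope $\F$, where pullbacks/pushouts are transparent) lets me compute the relevant quotients $Y_{ik}/Y_{ij}$ and identify them with $Y_{jk}$ inside $\F$, thereby recovering the three-term exactness $Y_{ij}\rar Y_{ik}\rar Y_{jk}$ in $\E$. This is what exhibits $Y$ as (isomorphic to) an admissible object, i.e.\ as an object of $\APa$, hence of $\limA$ after localizing.

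\textbf{Second step: build the admissible representative and check it maps to $Y$.} I would define the candidate admissible bifunctor by setting, for instance along the diagonal, the quotient objects produced in the first step, and verify directly that its image under $i$ (equivalently under $\hat\Phi$ restricted as in the proposition defining $\Psi$) is isomorphic to $Y$ as an ind/pro object. Here I can lean on the localization machinery: isomorphism in $\IP^a(\A)$ amounts to agreement after composing with suitable bicofinal reindexings, so it suffices to produce the data up to the equivalence relation defining $U_a$-roofs, which gives me room to adjust indices. Since $i$ is already fully faithful, producing any object $X\in\limA$ with $i(X)\cong Y$ completes the proof that $i$ is an equivalence.

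\textbf{The main obstacle.} The genuinely delicate point is the first step: translating the purely local, square-by-square cartesian/cocartesian condition of $\kappa^a(\A)$ into the global three-index admissibility condition of $\APa$. One must be careful that the cokernels $Y_{ik}/Y_{ij}$ assembled from elementary squares really glue into a well-defined admissible object over $\Pi$ and that they live in $\A$ rather than merely in the abelian envelope $\F$ — this is where Lemma \eqref{cokernels}, which guarantees that an admissible monomorphism has its cokernel back in $\A$, becomes essential. Once every intervening object is controlled to lie in $\A$ and every three-term sequence is shown to be in $\E$, the rest is bookkeeping with the localization and the already-established full faithfulness of $i$.
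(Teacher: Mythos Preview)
Your overall strategy is the same as the paper's: since $i$ is already fully faithful, one must show essential surjectivity, and this amounts to extracting the three-term admissibility condition $X_{ij}\hrar X_{ik}\epi X_{jk}$ from the bicartesian-square condition defining $\kappa^a(\A)$. However, your execution is considerably more elaborate than necessary. The paper dispatches the whole thing in one line: an admissible bicartesian square with corners $X_{i,j},\,X_{i,j'},\,X_{i',j},\,X_{i',j'}$ yields the admissible short exact sequence
\[
X_{i,j}\;\hrar\;X_{i,j'}\oplus X_{i',j}\;\epi\;X_{i',j'},
\]
and specializing to $i'=j$ (so that $X_{j,j}=0$) collapses this to the desired sequence $X_{i,j}\hrar X_{i,j'}\epi X_{j,j'}$, which is exactly the admissibility condition. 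No iteration of elementary squares, no passage to the abelian envelope, and no appeal to Lemma~\ref{cokernels} is needed: the cokernel you are worried about constructing is already the object $X_{j,j'}$ sitting in the diagram, so there is nothing to check about it lying in $\A$. For the same reason your entire ``Second step'' is superfluous: once the three-term sequences are shown to be in $\E$, the given bifunctor is \emph{itself} admissible, and no new representative has to be built or compared via $U_a$-roofs. Your route would succeed, but the direct-sum trick is what makes the argument a one-liner rather than a multi-stage construction.
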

\begin{proof}
We need to show that the embedding $i$ is essentially surjective. Let thus $X$ be an object of $\kappa^a(\A)$. By hypotheses, we are given an admissible (co-)cartesian square for all $i\leq i'\leq j\leq j'$:
$$
\xymatrix{ 
X_{i,j} \ar[d]
\ar[r]
& X_{i, j'}\ar[d] \\
X_{i', j}\ar[r]
& X_{i', j'}
}$$
Since the square is cartesian, and the mono and epimorphisms involved  are admissible, we obtain the admissible short exact sequence $X_{i,j}\rar X_{i,j'}\oplus X_{i',j}\rar X_{i',j'}$. Now take $i'=j$: Being $X_{j,j}=0$ we get  the admissible short exact sequence $X_{i,j}\rar X_{i,j'}\rar X_{j,j'}$ which for $i\leq j\leq j'$ expresses the admissibilty of the object $X$. Thus $X$ is in $\limA$, and the theorem is proved.
\end{proof}

\section{Exactness of the Beilinson category}

The goal of this section is to give a proof of the exactness of $\limA$, hence of the admissible Kato category $\kappa^a(\A)$, using the language of iterated ind/pro-objects. Thus, the admissible Kato category $\kappa^a(\A)$ of an exact category $\A$, introduced in Def. \eqref{admkato} can be thought of as the ``exact version" of the Kato category $\kappa(\A)$, which in general does not inherit in a canonical way an exact structure from that of $\A$. 

\begin{theorem}\label{exact} Let $(\A, \E)$ be an exact category. Then $\limA$ has a natural structure of an exact category.
\end{theorem}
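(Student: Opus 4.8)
The plan is to deduce the exactness of $\limA$ from the characterization of exact categories in Corollary \eqref{extensionclosed}: it suffices to exhibit $\limA$ as a full subcategory, closed under extensions, of an abelian category. I take this abelian category to be $\Ind\Pro(\F)$, where $\F$ is the abelian envelope of $(\A,\E)$ from Theorem \eqref{embedding}; it is abelian by two successive applications of Proposition \eqref{indproabelian}. Using the equivalence $\limA\cong\kappa^a(\A)$ of Theorem \eqref{charac} together with the embedding $\kappa^a(\A)\hrar\IP^a(\A)$ of Theorem \eqref{embedlima}, I regard $\limA$ as the full subcategory of $\Ind\Pro(\F)$ whose objects are the $\Pi$-indexed diagrams $X=(X_{i,j})$ with $X_{i,j}\in\A$, whose structure morphisms are admissible epimorphisms in the first index and admissible monomorphisms in the second, and all of whose squares are cartesian. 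It then remains only to show that this subcategory is closed under extensions in $\Ind\Pro(\F)$.

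Concretely, I would take a short exact sequence $0\rar X\rar W\rar Z\rar 0$ in $\Ind\Pro(\F)$ with $X,Z\in\limA$ and prove that $W\in\limA$. Since $\Ind\Pro(\F)=\Ind(\Pro(\F))$ is abelian with $\Pro(\F)$ abelian, Proposition \eqref{monostr} lets me straighten the sequence first in the ind-direction and then in the pro-direction, representing it by a $\Pi$-indexed system of short exact sequences of $\F$
\[
0\rar X_{i,j}\rar W_{i,j}\rar Z_{i,j}\rar 0,\qquad (i,j)\in\Pi,
\]
compatible with all structure maps, and with $X_{i,j},Z_{i,j}\in\A$. As $\A$ is closed under extensions in $\F$, each $W_{i,j}$ then lies in $\A$.

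It remains to check, for $W$, the three defining properties of an object of $\kappa^a(\A)$, all by diagram chases in the abelian category $\F$ applied to the three sequences stacked in the extension direction. For the structure morphisms: when $i\le i'$ the maps $X_{i',j}\rar X_{i,j}$ and $Z_{i',j}\rar Z_{i,j}$ are admissible epimorphisms, so the snake lemma shows $W_{i',j}\rar W_{i,j}$ is an epimorphism whose kernel is an extension of the (admissible) kernels of the two outer maps; this kernel therefore lies in $\A$, and Lemma \eqref{cokernels} makes $W_{i',j}\rar W_{i,j}$ an admissible epimorphism. The dual argument treats the admissible monomorphisms $W_{i,j}\rar W_{i,j'}$, and the same reasoning, in the form of the $3\times 3$ lemma, shows that the sequences $W_{i,j}\rar W_{i,k}\rar W_{j,k}$ are admissible short exact sequences of $\A$, i.e. that $W$ is an admissible object. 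For the cartesian property I use the kernel characterization of a pullback employed in the proof of Proposition \eqref{cartesian}: a square is cartesian in $\F$ exactly when an associated two-term sequence $0\rar a\rar b\oplus c\rar d$ is exact at $a$ and at $b\oplus c$. The squares of $X$ and $Z$ are cartesian, so the corresponding two-term sequences for $X$ and $Z$ have vanishing homology there; these sit in a short exact sequence of complexes with the one for $W$, so the long exact homology sequence forces the homology of the $W$-sequence to vanish as well. Hence the $W$-square is cartesian in $\F$, thus cartesian in $\A$ by Proposition \eqref{cartesian}(1) and, being admissible, cocartesian by Proposition \eqref{cartesian}(2). Therefore $W\in\kappa^a(\A)\cong\limA$.

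This establishes closure under extensions, and Corollary \eqref{extensionclosed} then endows $\limA$ with a natural exact structure, whose admissible short exact sequences are precisely those which are componentwise admissible short exact sequences of $\A$. The step I expect to be the main obstacle is the straightification of the extension in the doubly-indexed setting: the results of the preliminary section straighten morphisms one direction at a time, so some care is needed to produce a single $\Pi$-indexed system of short exact sequences realizing the given extension while keeping the admissibility and cartesian structure of $X$ and $Z$ intact. Once this is done, everything else is a routine application of the snake and $3\times 3$ lemmas in $\F$ together with the closure of $\A$ under extensions.
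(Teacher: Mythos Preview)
Your strategy coincides with the paper's: embed $\limA$ (via the identification with $\kappa^a(\A)$) into an iterated ind/pro category, straightify an extension to a $\Pi$-indexed system of short exact sequences, use the $3\times 3$ lemma in the abelian envelope to see that the middle object has admissible structure maps, and then check that its squares are cartesian. The paper takes the ambient category to be the exact category $\Ind\Pro(\A)$ and passes to $\F$ only for the diagram chases, while you go straight to the abelian $\Ind\Pro(\F)$; this is a cosmetic difference.

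The one genuine divergence is in the ``cartesian squares are closed under extensions'' step. The paper isolates this as a separate proposition and proves it by a fairly long element-level diagram chase in $\F$. Your argument---encode cartesianness of the square as exactness of the two-term complex $0\to a\to b\oplus c\to d$ at the first two spots, form the short exact sequence of such complexes for $X$, $W$, $Z$, and read off the vanishing of the relevant homology of the $W$-complex from the long exact sequence---is correct and considerably more economical. It also makes transparent that only the abelian-category statement is needed, with Proposition \eqref{cartesian} then transporting the conclusion back to $\A$. Your caveat about the doubly-indexed straightification is appropriate; the paper handles it at the same level of informality.
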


We shall prove theorem \eqref{exact} by showing that $\limA$ is closed under extensions in the exact category $\Ind\Pro(\A)$. This will also give at once the exact structure of $\limA$. This amounts to show that if $X$ and $Z$ are objects in $\limA$ and there is an admissible short exact sequence in $\Ind\Pro(\A)$ given by $0\rar X\rar Y\rar Z$, the object $Y$ is isomorphic to an object of $\limA$.
\par
By using the straightification of the morphisms involved for the objects $X, \ Y, \  Z$ of $\Ind\Pro(\A)$, we can reduce ourselves to consider, in components, these objects such that for all $(i,j)$ we have admissible short exact sequences of $\A$:
\[
0\lrar X_{i,j}\hrar Y_{i,j}\epi Z_{i,j}\lrar 0
\]
compatible with the structure morphisms induced by the functors represented by $X, \ Y, \  Z$, and for which, whenever we set $i\leq i'$ and $j\leq j'$, we have that the squares
$$
\xymatrix{ 
X_{i,j} \ar[d]
\ar[r]
& X_{i, j'}\ar[d] \\
X_{i', j}\ar[r]
& X_{i', j'}
}$$
and
$$
\xymatrix{ 
Z_{i,j} \ar[d]
\ar[r]
& Z_{i, j'}\ar[d] \\
Z_{i', j}\ar[r]
& Z_{i', j'}
}$$
are both admissible and cartesian (hence cocartesian) in $\A$. In order to prove theorem \eqref{exact} it is enough to prove that also the square
\begin{equation}\label{ysquare}
\xymatrix{ 
Y_{i,j} \ar[d]
\ar[r]
& Y_{i, j'}\ar[d] \\
Y_{i', j}\ar[r]
& Y_{i', j'}
}
\end{equation}
is both admissible and cartesian, for all $(i,j)$ and $(i', j')$ in $\Pi$ such that $i\leq i'$ and $j\leq j'$.  In other words we want to prove that, under our assumptions, ``admissible cartesian squares are closed under extensions".
\par
Let's start by first showing that the square \eqref{ysquare} is an admissible one. We want thus to prove that its horizontal arrows are admissible monomorphisms, and its vertical ones admissible epimorphisms. For given $i\leq j\leq j'$, let's write the following commutative diagram:
$$
\xymatrix{ 
& 0 \ar[d]& 0 \ar[d] &0\ar[d] & \\
0\ar[r] & X_{i,j}\ar[d] \ar[r] &Y_{i,j} \ar[d]\ar[r] &Z_{i,j}\ar[d]\ar[r]& 0 \\
0\ar[r] & X_{i,j'}\ar[d] \ar[r] &Y_{i,j'} \ar[d]\ar[r] &Z_{i,j'}\ar[d]\ar[r]& 0 \\
0\ar[r] & X_{j,j'}\ar[d] \ar[r] &Y_{j,j'} \ar[d]\ar[r] &Z_{j,j'}\ar[d]\ar[r]& 0 \\
& 0 & 0 & 0  & \\
}
$$
in which the three rows are admissible short exact, and so are the first and the third column. But as we set this diagram in the abelian envelope $\F$, we can thus apply the middle $3\times 3$ lemma (see e.g. \cite{ml}, p. 208), and conclude that in $\F$ the middle sequence is also short exact. We thus apply the last part of Quillen's embedding theorem \eqref{embedding}, which tells us that this sequence in $\A$ is an admissible short exact sequence. This shows in a single shot that  in the square \eqref{ysquare}, the horizontal morphisms are admissible mono, as well as the vertical morphisms are admissible epi. This shows that the square \eqref{ysquare} is an admissible square.

\vspace{0.3cm}

It is then left to prove that the square \eqref{ysquare} is cartesian. This will follow from a diagram-chase argument involving an exact sequence of three squares (i.e. exact for the four sequences of the vertexes of the squares), whose first and last squares are cartesian.  We thus state this property as a proposition of homological algebra.

\begin{prop} Let be
\begin{equation}\label{first}
\xymatrix{ 
X' \ar[d]_{e'_2} 
\ar[r]^{m'_1}
& Y'\ar[d]^{e'_1} \\
T'\ar[r]_{m'_2}
& Z'
}
\end{equation}
\begin{equation}\label{second}
\xymatrix{ 
X \ar[d]_{e_2} 
\ar[r]^{m_1}
& Y\ar[d]^{e_1} \\
T\ar[r]_{m_2}
& Z
}
\end{equation}
\begin{equation}\label{third}
\xymatrix{ 
X'' \ar[d]_{e''_2} 
\ar[r]^{m''_1}
& Y''\ar[d]^{e''_1} \\
T''\ar[r]_{m''_2}
& Z''
}
\end{equation}
three admissible squares, where the morphisms indexed by $m$ are the 
admissible monos and those indexed by e the admissible epis. Suppose also that the squares \eqref{first} and \eqref{third}are cartesian in $\A$. Suppose that we have admissible short exact sequences: 

\begin{align}
X'\xrar{f} X\xrar{g} X'' &&  Y'\xrar{f'} Y\xrar{g'} Y'' \\
T'\xrar{f'''} T\xrar{g'''} T'' &&  Z'\xrar{f''} Z\xrar{g''} Z''
\end{align}
such that the resulting cubic diagram is commutative. Then, the square \eqref{second} is cartesian.
\end{prop}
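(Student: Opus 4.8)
The plan is to reduce the statement to the abelian envelope $\F$ of $\A$ and then run the diagram chase suggested before the proposition. By part (1) of Proposition \eqref{cartesian}, a square is cartesian in $\A$ if and only if its image under the Quillen embedding $h\colon\A\hrar\F$ is cartesian in $\F$; moreover, by Theorem \eqref{embedding}, each of the admissible short exact sequences $X'\xrar{f}X\xrar{g}X''$, $Y'\xrar{f'}Y\xrar{g'}Y''$, $T'\xrar{f'''}T\xrar{g'''}T''$, $Z'\xrar{f''}Z\xrar{g''}Z''$ becomes a genuine short exact sequence in $\F$, and the squares \eqref{first} and \eqref{third} become cartesian in $\F$. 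Since $\F$ is abelian, I may argue with ``elements'' exactly as in the proof of Lemma \eqref{objlima} (see \cite{ml}). It then suffices to show that, in $\F$, the square \eqref{second} is the pullback of the cospan $Y\xrar{e_1}Z\xlar{m_2}T$.

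For existence, I would start with $y\in Y$ and $t\in T$ satisfying $e_1(y)=m_2(t)$ and push them into the double-primed layer: set $y''=g'(y)$ and $t''=g'''(t)$. The commutativity of the cube gives $e''_1(y'')=g''(e_1(y))=g''(m_2(t))=m''_2(t'')$, so by the pullback property of \eqref{third} there is a unique $x''\in X''$ with $m''_1(x'')=y''$ and $e''_2(x'')=t''$. Using that $g\colon X\epi X''$ is an epimorphism, lift $x''$ to some $x\in X$. The discrepancies then measure the failure of $x$ to be the desired preimage: $g'(m_1(x)-y)=m''_1(x'')-y''=0$ and $g'''(e_2(x)-t)=e''_2(x'')-t''=0$, so exactness of the $Y$- and $T$-columns yields $y'\in Y'$ and $t'\in T'$ with $f'(y')=m_1(x)-y$ and $f'''(t')=e_2(x)-t$.

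The crux of the argument is to correct $x$ by an element coming from $X'$, which requires applying the pullback property of the cartesian square \eqref{first} to the pair $(y',t')$; for this I must verify the compatibility $e'_1(y')=m'_2(t')$. Applying $e_1$ to $f'(y')=m_1(x)-y$ and using $e_1m_1=m_2e_2$ together with the cube relation $e_1f'=f''e'_1$ gives $f''(e'_1(y'))=m_2e_2(x)-m_2(t)$; applying $m_2$ to $f'''(t')=e_2(x)-t$ and using $m_2f'''=f''m'_2$ gives $f''(m'_2(t'))=m_2e_2(x)-m_2(t)$. Since $f''$ is a monomorphism, $e'_1(y')=m'_2(t')$ follows. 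I expect this compatibility check to be the main obstacle, since it is precisely where all four faces of the cube are invoked simultaneously. The pullback property of \eqref{first} then produces $x_0\in X'$ with $m'_1(x_0)=y'$ and $e'_2(x_0)=t'$, and a short computation using the face relations $m_1f=f'm'_1$ and $e_2f=f'''e'_2$ shows that $x-f(x_0)$ maps to $y$ under $m_1$ and to $t$ under $e_2$.

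Finally, for uniqueness, I would take a difference $\delta\in X$ with $m_1(\delta)=0$ and $e_2(\delta)=0$, push it to $X''$ where it vanishes because the cartesian square \eqref{third} forces $(m''_1,e''_2)$ to be a monomorphism, descend it along $f$ to some $\delta'\in X'$, and conclude $\delta'=0$ using that $f'$ and $f'''$ are monomorphisms and that \eqref{first} likewise makes $(m'_1,e'_2)$ a monomorphism. Hence \eqref{second} is a pullback in $\F$, and by part (1) of Proposition \eqref{cartesian} it is cartesian in $\A$, which is exactly what is required to show that the square \eqref{ysquare} is cartesian and thereby to finish the proof of Theorem \eqref{exact}.
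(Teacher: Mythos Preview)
Your argument is correct and follows essentially the same diagram-chase in the abelian envelope as the paper: reduce via Proposition~\eqref{cartesian}, use the cartesian square~\eqref{third} to produce $x''$, lift along $g$, measure the defects $y',t'$ via exactness, verify $e'_1(y')=m'_2(t')$ using injectivity of $f''$, and correct via the cartesian square~\eqref{first}. Your route to the compatibility check for~\eqref{third} is slightly more direct than the paper's (which first chooses an auxiliary lift $\tilde x$ of $t$ along $e_2$), and your separate uniqueness paragraph, while correct, is superfluous since $m_1$ is a monomorphism.
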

\begin{proof} We shall use a diagram-chase argument, as we are in the hypotheses of the proposition \eqref{cartesian}. Thus, we can think of the above as a diagram of abelian groups, for which we are given elements $y\in Y$ and $t\in T$, such that $e_1(y)=m_2(t)$ in $Z$, and the claim is that there exists a unique element $a\in X$, such that $e_2(a)=t$ and $m_1(a)=y$.

\vspace{0.1cm}

Let be $\tilde x$ a preimage of $t$ through $e_2$.  Let's put $\tilde y=m_1(\tilde x)$.  Then we have $e_1(\tilde y-y)=0$. Consider now $g(\tilde x)\in X''$. Then $m''(g(\tilde x))\in Y''$, and we have: 
$e'_2g'(y-\tilde y)=g''e_1(y-\tilde y)=0$. Now, $e'_2g'(y-\tilde y)=0$, so that we obtain 
$e'_2g'(y)=e'_2g'(\tilde y)$.

\vspace{0.1cm}

From the last equality we thus get: $e'_2g'(y)=e'_2g'(\tilde y)=e'_2m''_1(g(\tilde x))=m''_2e''_2(g(\tilde x))=
m''_2g'''e_2(\tilde x)=m''_2g'''(t).$

\vspace{0.1cm}

In particular, the last equality implies that  the element $g'(y)\in Y''$ and the element $g'''(t)\in T''$, are such that $e'_2(g'(y))=m''_2(g'''(t))$ in $Z''$. But \eqref{third} is a cartesian square: then there exists a unique element $x''\in X''$ such that:
\begin{align}\label{*}
m''_1(x'')=g'(y) & & \text{and} & & e''_2(x'')=g'''(t).
\end{align}
Now, $g$ is onto: then we get a preimage of $x''$ via $g$ in $X$: $x''=g(x)$. Hence, $m_1(x)\in Y$ and 
$g'm_1(x)=m''_1g(x)=m_1''(x'')=g'(y).$ Then, $g'(m_1(x)-y)=0$, which yields: $m_1(x)-y\in\ker g'=im(f')$.

\vspace{0.1cm}

Let thus be $y'\in Y'$ such that $f'(y')=m_1(x)-y$. Consider now $e'_1(y')\in Z'$. We claim that $e'_1(y')$ has a preimage in $T'$, via $m_2'$.

\vspace{0.1cm}

In fact, consider the chain of equalities: $f''e'_1(y')=e_1f'(y')=e_1(m_1(x)-y)=e_1m_1(x)-e_1(y)=m_2e_2(x)-e_1(y)=m_2e_2(x)-m_2(t).$ We therefore get:
\begin{equation}\label{**}
f''e'_1(y')=m_2(e_2(x)-t).
\end{equation}
From it, we see that $f''e'_1(y')$ is in the image of $m_2$. Consider next $g'''(e_2(x)-t)=g'''e_2(x)-g'''(t)=e''_2(x'')-g'''(t)=0$, from \eqref{*}. It follows: $e_2(x)-t\in \ker g'''=im f'''$. We thus get an element $t'\in T'$ such that $f'''(t')=e_2(x)-t$.

\vspace{0.1cm}

We now compute $m'_2(t')$. We want to prove that $m'_2(t')=e'_1(y')$. Since $f''$ ins injective, it is sufficient to show that $f''m'_2(t')=f''e'_1(y')$.

\vspace{0.1cm}

From \eqref{**} we obtain: $f''m'_2(t')=m_2f'''(t')=m_2(e_2(x)-t)=f''e'_1(y')$. It follows indeed that $m'_2(t')=e'_1(y')$. So, we have found a pair of elements $t'$ and $y'$, such that $m'_2(t')=e'_1(y')$. Applying now the cartesianity of square \eqref{first} we therefore obtain a unique element $x'\in X'$ for which 
$m'_1(x')=y'$ and $e'_2(x')=t'$.

\vspace{0.1cm}

Finally, we consider the element $f(x')=x_0\in X$. We have: $m_1(x_0)=m_1f(x')=f'm'_1(x')=f'(y')=m_1(x)-y.$  As a consequence we can write: $y=m_1(x)-m_1(x_0)=m_1(x-x_0)$. We have thus found an element in $X$ which is sent to $y$ by $m_1$. If this element is sent to $t$ by $e_2$, the proof of the proposition is completed. Thus, let's calculate $e_2(x-x_0)$. It is:

\vspace{0.1cm}

$e_2(x-x_0)=e_2x-e_2x_0=e_2(x)-e_2f(x')=e_2(x)-f'''e'_2(x').$ Now, $m_2e_2(x-x_0)=m_2e_2(x)-m_2f'''e'_2(x')=m_2e_2(x)-f''m'_2e'_2(x')=m_2e_2(x)-f''e'_1m'_1(x')=m_2e_2(x)-f''e'_1(y')=m_2e_2(x)-e_1f'(y')=m_2e_2(x)-m_2e_2(x)+m_2(t)$, where the last equality comes from \eqref{**}. Hence, $m_2(e_2(x-x_0))=m_2(t).$ Being $m_2$ injective, it follows $e_2(x-x_0)=t$, and we are done. 
\end{proof}

\vspace{0.3cm}

As a corollary of the exactness of $\limA$ we can now prove that the categories $\Ind^a_{\aleph_0}(\A)$, $\Pro^a_{\aleph_0}(\A)$ are exact. In order to do so, we first describe precisely how one can embed $\Ind^a_{\aleph_0}(\A)$ and  $\Pro^a_{\aleph_0}(\A)$ in $\limA$. 

\vspace{0.2cm}

Let be $X$ an object of $\Ind^a_{\aleph}(\A)$. Suppose $X=\limXj$. As $X$ is a countable strictly admissible ind-object, we can assume $J=\Zee_+$. Let's also choose quotients $X_j/X_i$, for every admissible monomorphism $X_i\hrar X_j$ whenever $i\leq j$. Thus, let us define an object of $\limA$ out of $X$ as follows:
\begin{equation}\label{embedind}
\begin{split}
X_{0, j}& =X_j \\
X_{i, j}& =X_{0, j} \text{ for } i<0 \text{ and } j\geq 0 \\
X_{i,j}& =0 \text{ for } i<0 \text{ and } j<0 \\
X_{i,j}& =X_j/X_i \text{ for } 0\leq i\leq j.
\end{split}
\end{equation}
And the structure maps are defined in the obvious way. In particular, for $i<0$ and $j\geq 0$, the admissible epimorphisms $X_{i, j}\epi X_{i+1, j}$ are just identities. It is easy to see that that the equations \eqref{embedind} give an embedding 
$\Ind^a_{\aleph_0}(\A)\hrar \limA$, although the embedding depends on the choice of the quotients $X_j/X_i$. 

\begin{prop}\label{pre}
$\Ind^a_{\aleph_0}(\A)$, $\Pro^a_{\aleph_0}(\A)$ are exact subcategories of $\limA$.
\end{prop}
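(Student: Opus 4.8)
The plan is to exhibit $\Ind^a_{\aleph_0}(\A)$ as a \emph{fully exact} subcategory of the exact category $\limA$ (Theorem \eqref{exact}). Since the embedding \eqref{embedind} is an additive, fully faithful functor $\Ind^a_{\aleph_0}(\A)\hrar\limA$, it identifies $\Ind^a_{\aleph_0}(\A)$ with a full additive subcategory of $\limA$, and by the remarks following the definition of fully exact subcategory it suffices to prove that this image is \emph{closed under extensions}; the induced exact structure then comes for free. Throughout I would use the componentwise description of the exact structure of $\limA$ obtained in the proof of Theorem \eqref{exact}: a sequence $0\rar X\rar Y\rar Z\rar 0$ is admissible exact exactly when, after straightification, each $0\rar X_{i,j}\hrar Y_{i,j}\epi Z_{i,j}\rar 0$ is an admissible short exact sequence of $\A$ compatible with the structure maps.

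First I would identify intrinsically which objects of $\limA$ lie in the image of \eqref{embedind}. Inspecting \eqref{embedind}, for $i<0$ and $j\geq 0$ one has $X_{i,j}=X_{0,j}$ with identity transition maps, so each column $(X_{i,j})_{i\leq j}$ is essentially constant: the admissible epimorphisms $X_{i',j}\rar X_{i,j}$ are isomorphisms for all sufficiently negative $i\leq i'$, and the pro-object $\Xij$ is (isomorphic to) the honest object $X_j\in\A$. Conversely, if $X\in\limA$ has this property, set $X_j:=\Xij\in\A$; the maps $X_j\rar X_{j'}$ are then literally the admissible monomorphisms $X_{i,j}\hrar X_{i,j'}$ for $i\ll 0$, so $(X_j)_j$ is a strictly admissible ind-object, and the admissibility relations $X_{j,k}=X_{i,k}/X_{i,j}$ show that applying \eqref{embedind} to $(X_j)_j$ recovers $X$ up to isomorphism in $\limA$. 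Hence the image consists precisely of the objects of $\limA$ whose columns are essentially constant.

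It then remains to show this property is stable under extensions. I would take an admissible short exact sequence $0\rar X\rar Y\rar Z\rar 0$ in $\limA$ with $X,Z$ in the image, straightify it, and fix a column index $j$. For $i\leq i'$ both sufficiently negative the straightified data give, in the abelian envelope $\F$ (Theorem \eqref{embedding}), a commutative ladder with exact rows
\begin{equation*}
\xymatrix{
0\ar[r] & X_{i',j}\ar[d]\ar[r] & Y_{i',j}\ar[d]\ar[r] & Z_{i',j}\ar[d]\ar[r] & 0\\
0\ar[r] & X_{i,j}\ar[r] & Y_{i,j}\ar[r] & Z_{i,j}\ar[r] & 0
}
\end{equation*}
in which, by hypothesis, the outer vertical maps are isomorphisms. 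By the five lemma in $\F$ the middle map $Y_{i',j}\rar Y_{i,j}$ is an isomorphism of $\F$; since $Y_{i',j},Y_{i,j}\in\A$ and $\A\hrar\F$ is full, it is an isomorphism of $\A$. Thus every column of $Y$ is essentially constant, so $Y$ lies in the image of \eqref{embedind}. This proves closure under extensions, hence that $\Ind^a_{\aleph_0}(\A)$ is an exact subcategory of $\limA$; the case of $\Pro^a_{\aleph_0}(\A)$ follows by the dual argument, exchanging rows for columns.

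The main obstacle I expect is the intrinsic characterization of the image in the second paragraph: one must verify that ``essentially constant columns'' is the correct condition, stable under the identifications made in passing to the localized category $\limA$, and that any object with this property is genuinely isomorphic \emph{in} $\limA$ to a normal-form object produced by \eqref{embedind}. Once that identification is secure, the closure under extensions reduces to the routine five-lemma argument above.
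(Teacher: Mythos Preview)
Your proposal is correct and follows essentially the same approach as the paper: both argue that $\Ind^a_{\aleph_0}(\A)$ is closed under extensions in $\limA$ by straightifying an admissible short exact sequence and applying the Five Lemma to the componentwise ladder to force the middle object's pro-direction to be trivial. The paper works directly with the explicit normal form \eqref{embedind} (showing $Y_{i,j}=0$ for $i,j<0$ and $Y_{i,j}\cong Y_{0,j}$ for $i<0\leq j$) rather than introducing your intrinsic ``essentially constant columns'' characterization, but this is a presentational rather than a mathematical difference.
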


\begin{proof}
We will show that $\Ind^a_{\aleph_0}(\A)$ is closed under extensions in $\limA$, with a simplified version of the argument used to prove that $\limA$ is exact. Let be

\begin{equation}\label{indclosed}
X\hrar Y\epi Z
\end{equation}

an admissible short exact sequence in $\limA$, with $X, Z\in\Ind^a_{\aleph_0}(\A)$. We prove that $Y$ also is in $\Ind^a_{\aleph}(\A)$.

\vspace{0.1cm}

A priori, $Y$ is an object of $\limA$ (i.e. of $\Ind^a\Pro^a(\A)$). The proof consists in showing that $Y$ is trivial in its pro-component. Let be $Y=\limYij$. Then \eqref{indclosed} in components can be written, for all $(i, j)$, $i\leq j$, as an admissible short exact sequence of $\A$: 
$$
X_{i,j}\hrar Y_{i,j}\epi Z_{i,j}. 
$$

Here $X_{i,j}$ and $Z_{i,j}$ are defined as in \eqref{embedind}. Therefore, for $i<0$, $j<0$, we get the admissible short exact sequence
$$
0\hrar Y_{i,j}\epi 0
$$
which clearly forces $Y_{i,j}=0$ in this case. We are now left to prove that $Y_{i, j}=Y_{0, j}$ for $i<0$ and $j\geq 0$. Let's prove it for $i=-1$. From \eqref{embedind} we have $X_{-1, 0}=X_{0,0}$ and $Z_{-1, 0}=Z_{0, 0}$, and an admissible epimorphism $Y_{-1, 0}\epi Y_{0,0}$. We then have the commutative diagram
$$
\xymatrix{ 
X_{-1,0}\ar[d] \ar[r] &Y_{-1,0} \ar[d]\ar[r] &Z_{-1,0}\ar[d] \\
X_{0,0}\ar[r] &Y_{0,0} \ar[r] &Z_{0,0} \\
}
$$
the two rows are admissible short exact sequences, the first and the third  column are isomorphisms. From the Five Lemma (\cite{ml}, p. 205) it follows that also the morphism $Y_{-1, 0}\epi Y_{0,0}$ is then an isomorphism, and this proves that $Y$ is in $\Ind^a_{\aleph_0}(\A)$. Then it follows that $\Ind^a_{\aleph_0}(\A)$ is exact. Similarly one proves that $\Pro^a_{\aleph_0}(\A)$ is exact.
\end{proof}

\section{Examples; Tate spaces}

As an example of a category of the type $\limA$, we give the following

\begin{definition}\label{tatespaces}
Let be $k$ a field. The category $\T:=\dlim\vect$ is called the category of {\it Tate vector spaces} over $k$.
\end{definition}
The definition given here coincides with the one given by Arkhipov and Kremnizer in \cite{ak}. Definition \eqref{tatespaces} can be iterated since each category $\limA$ is exact for any exact category $\A$. We call the 
category $\T^n=\dlim^n\vect$ the category of {\it n-Tate spaces} over the 
field $k$.

\vspace{0.1cm}

The proof of the following is therefore clear:
\begin{prop}\label{limduality}
For any exact category $\A$, $(\limA)^o=\dlim(\A^o)$.
\end{prop}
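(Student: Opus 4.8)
The plan is to make precise the observation that \emph{every} ingredient in the construction of $\limA$ is self-dual, and to package this into a single order-reversing reindexing of $\Pi$. Consider the involution $\sigma\colon\Pi\to\Pi$ given by $\sigma(i,j)=(-j,-i)$; since $i\leq j$ implies $-j\leq-i$ this is well defined, it is a bijection, and it reverses the order of $\Pi$ (one checks $(i,j)\leq(i',j')$ iff $\sigma(i',j')\leq\sigma(i,j)$). First I would use $\sigma$ to define, for each functor $X\colon\Pi\to\A$, a functor $DX\colon\Pi\to\A^{op}$ by
$$
(DX)_{i,j}:=X_{-j,-i},
$$
with structure maps obtained from those of $X$ read backwards: for $(i,j)\leq(i',j')$ the arrow $(DX)_{i,j}\to(DX)_{i',j'}$ of $\A^{op}$ is the arrow $X_{-j',-i'}\to X_{-j,-i}$ of $\A$ coming from $\sigma(i',j')\leq\sigma(i,j)$. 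On morphisms $D$ reverses arrows componentwise, so it is a contravariant functor $\APa\to\BiFun^a(\Pi,\A^{op})$; applying the same recipe to $\A^{op}$ and using $\sigma^2=\id_{\Pi}$ shows $D$ is a bijection on objects and on Hom-sets, i.e. a contravariant isomorphism of categories.

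The key verification is that $D$ lands in the \emph{admissible} objects. Recall (Examples and Remarks (c)) that $\A^{op}$ is again exact, with the admissible monomorphisms and epimorphisms of $\A^{op}$ being the opposites of the admissible epimorphisms and monomorphisms of $\A$. Fix $i\leq j\leq k$; the defining sequence of $DX$ at this triple is $(DX)_{i,j}\to(DX)_{i,k}\to(DX)_{j,k}$, which unwinds to $X_{-j,-i}\to X_{-k,-i}\to X_{-k,-j}$ in $\A^{op}$. By the description of admissible sequences in $\A^{op}$ this is admissible short exact in $\A^{op}$ precisely when $X_{-k,-j}\to X_{-k,-i}\to X_{-j,-i}$ is admissible short exact in $\A$; setting $a=-k\leq b=-j\leq c=-i$ this last sequence is exactly $X_{a,b}\to X_{a,c}\to X_{b,c}$, which lies in $\E$ because $X\in\APa$. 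Hence $DX\in\BiFun^a(\Pi,\A^{op})$, and by symmetry $D$ restricts to a contravariant isomorphism between the admissible subcategories.

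It remains to match the localizing classes $U_a$ of Definition \eqref{limA}. To a bicofinal $\phi\colon\Zee\to\Zee$ I associate $\phi^{\vee}(n):=-\phi(-n)$, which is again nondecreasing and bicofinal, and which satisfies $\sigma\circ\tilde\phi=\widetilde{\phi^{\vee}}\circ\sigma$ for the induced maps of \eqref{tildephi}; consequently $D(X\cdot\tilde\phi)=(DX)\cdot\widetilde{\phi^{\vee}}$. Moreover $\phi_0\leq\phi_1$ forces $\phi_1^{\vee}\leq\phi_0^{\vee}$, so the generator $X\cdot\tilde\phi_0\to X\cdot\tilde\phi_1$ of $U_a$ is carried by the contravariant $D$ to $(DX)\cdot\widetilde{\phi_1^{\vee}}\to(DX)\cdot\widetilde{\phi_0^{\vee}}$, which is a generator of $U_a$ for $\A^{op}$. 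Thus $D$ is a contravariant isomorphism of categories matching the two localizing classes, and since such an isomorphism descends to the localizations we obtain
$$
(\limA)^o=\bigl(\APa[U_a^{-1}]\bigr)^o\cong\BiFun^a(\Pi,\A^{op})[U_a^{-1}]=\dlim(\A^o),
$$
which is the assertion.

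The only real obstacle is the direction bookkeeping: one must check that the order-reversal of $\sigma$, the contravariance of $D$, and the order-reversal $\phi\mapsto\phi^{\vee}$ conspire so that generators of $U_a$ map to genuine generators of $U_a$ (and not to their inverses) and that admissibility, which exchanges the ``pro'' direction $i$ with the ``ind'' direction $j$, is transported correctly. Once the signs are arranged as above, each step is routine.
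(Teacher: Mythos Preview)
Your proof is correct; the paper itself does not supply an argument, merely declaring that ``the proof of the following is therefore clear''. Your explicit reindexing $\sigma(i,j)=(-j,-i)$ and the involution $\phi\mapsto\phi^{\vee}$ are precisely the bookkeeping needed to make that clarity honest, and each verification (admissibility, matching of the localizing classes) is carried out accurately.
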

In particular, being $\vect=\vect^o$, the category $\T$ is self-dual.

\vspace{0.3cm}

Let us denote by $\Ll_0$ the category of
linearly compact topological $k-$vector spaces and by $\Ll$ the category of locally linearly compact topological $k$-vector 
spaces and their morphisms, as introduced in \cite{l}, II.27.1 and II.27.9. 

\begin{lm}
There are equivalences of categories: 
$$
\Phi_0:\Pro(\vect)\xrar{\sim}\Pro^s(\vect)\xrar{\sim}\Ll_0.
$$
\end{lm}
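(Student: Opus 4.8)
The plan is to prove the two equivalences separately, treating this lemma as the linear analogue of Theorem \eqref{loc} (finite sets replaced by finite-dimensional vector spaces, profinite sets by linearly compact spaces). The first arrow $\Pro(\vect)\xrar{\sim}\Pro^s(\vect)$ is the analogue of Theorem \eqref{loc}(1), and the second arrow $\Pro^s(\vect)\xrar{\sim}\Ll_0$ is the analogue of Theorem \eqref{loc}(2), realized here by the projective-limit functor.

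For the first equivalence, note that $\Pro^s(\vect)$ is by definition a full subcategory of $\Pro(\vect)$, so the inclusion is automatically fully faithful and it suffices to prove essential surjectivity, i.e.\ that every pro-object is isomorphic to a strict one. Given $X=\proXi$, I would form for each $i$ the descending chain of subspaces $\im(X_j\to X_i)\subseteq X_i$ indexed by $j\geq i$; since $X_i$ is finite-dimensional this chain stabilizes, so $X_i':=\bigcap_{j\geq i}\im(X_j\to X_i)$ equals $\im(X_{j_0}\to X_i)$ for some $j_0$. The structure maps restrict to surjections $X_j'\epi X_i'$, giving a strict pro-object $X'$, and the inclusions $X_i'\hrar X_i$ assemble into a morphism of pro-systems which one checks is an isomorphism in $\Pro(\vect)$ (this is the exact dual of the statement that every ind-object over $\vect$ is strict, obtained by passing to images).

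For the second equivalence I would define $\Pro^s(\vect)\to\Ll_0$ by sending a strict pro-object $\proXj$ to the topological vector space $\underset{j}{\varprojlim}\,X_j$, where each $X_j$ is given the discrete topology and the limit carries the inverse-limit topology. Its open subspaces of finite codimension form a fundamental system of neighborhoods of $0$, so this limit is linearly compact and the functor indeed lands in $\Ll_0$. Essential surjectivity then follows from the Lefschetz structure theory (\cite{l}): every linearly compact $V$ is topologically isomorphic to $\underset{U}{\varprojlim}\,V/U$ over its open subspaces $U$ of finite codimension, and for $U'\subseteq U$ the quotient maps $V/U'\epi V/U$ are surjective, exhibiting $V$ as the image of a strict pro-object.

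The step I expect to be the main obstacle is fullness and faithfulness, i.e.\ identifying a pro-morphism $\proXi\to\proYj$, an element of $\underset{j}{\varprojlim}\,\underset{i}{\varinjlim}\,\Hom_{\vect}(X_i,Y_j)$ as in \eqref{prohom}, with a continuous linear map $\underset{i}{\varprojlim}\,X_i\to\underset{j}{\varprojlim}\,Y_j$. The key observation is that, since each $Y_j$ is finite-dimensional and discrete, a continuous map $\underset{i}{\varprojlim}\,X_i\to Y_j$ has open kernel and hence factors through some projection $\underset{i}{\varprojlim}\,X_i\epi X_{i(j)}$, that is, through a finite stage; assembling these factorizations compatibly over $j$ recovers precisely the pro-morphism data modulo the equivalence relation of \eqref{prohom}, while conversely every such datum induces a continuous map on limits. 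Verifying that this correspondence is well defined against the colimit–limit equivalence relations, and bijective, is the technical heart of the argument; once it is established, the composite $\Phi_0$ is an equivalence.
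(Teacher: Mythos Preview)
Your proposal is correct and follows essentially the same approach as the paper's proof: the image-stabilization construction $X_i':=\bigcap_{j\geq i}\im(X_j\to X_i)$ for the first equivalence, and the projective-limit functor together with the Lefschetz structure theory for the second. The paper's sketch simply asserts that fullness and faithfulness are ``easy to see,'' whereas you spell out the open-kernel factorization argument; this extra detail is welcome but does not constitute a different route.
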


\begin{proof}({\it Sketch.}) The first equivalence is proved by turning a projective system of finite-dimensional vector spaces $\proVj$ into a strict projective system $\proVvj$, by defining 
$$
V'_j\colon =\bigcap_{i\geq j}\im\{v_{ij}\colon V_i\to V_j\}
$$
where the $v_{ij}$'s are the structure morphisms of the pro-system of the $V_j$'s. 
Since the spaces $V_j$ are finite-dimensional, each intersection stabilizes; therefore the two pro-objects $\proVj$ and $\proVvj$ are isomorphic, and the latter object is  strict since its structure morphisms are surjections. The second equivalence is defined on the objects by 
$$
\proVvj\mapsto {\underset{j\in J}{\varprojlim}\  V'_j}
$$
And it is easy to see that the induced functor is full and faithful. The fact that it is also essentially surjective is a consequence of Theorem (32.1) of \cite{l}.
\end{proof}
In particular, the category $\Ll_0$ is an abelian category.
\begin{prop}\label{lefschetz}
There is an equivalence of categories: $\Phi:\T\xrar{\sim}\Ll$, whose restriction to the category $\Pro^s(\vect)$ is $\Phi_0$.
\end{prop}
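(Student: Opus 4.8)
The plan is to build $\Phi$ column by column out of the equivalence $\Phi_0$ of the preceding lemma and then pass to a direct limit of linearly compact spaces. By Theorem \eqref{charac} we may regard an object of $\T=\dlim\vect$ as an object $X=(X_{i,j})_{(i,j)\in\Pi}$ of $\kappa^a(\vect)\subset\IP^a(\vect)$, so that $X=\varinjlim_j X_j$ where each column $X_j:=\varprojlim_{i\le j}X_{i,j}$ is a strict admissible pro-object of $\vect$. Set $V_j:=\Phi_0(X_j)\in\Ll_0$, a linearly compact space. For $j\le j'$ the structure map $X_j\to X_{j'}$ is an admissible monomorphism of $\Pro^a(\vect)$, and admissibility of $X$ identifies its cokernel with the finite-dimensional space $X_{j,j'}$; hence $\Phi_0$ sends it to a closed embedding $V_j\hookrightarrow V_{j'}$ of finite codimension, which is therefore open. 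Define $\Phi(X):=\varinjlim_j V_j$, the union of the $V_j$ equipped with the colimit (finest) linear topology. Since each $V_j$ is an open linearly compact subspace, $\Phi(X)$ is locally linearly compact. One extends $\Phi$ to morphisms using the roof/straightified description of $\Hom_{\T}$. On an object concentrated in a single ind-degree, i.e. on a strict admissible pro-object embedded in $\T$, only one $V_j$ is essentially present and $\Phi$ reduces to $\Phi_0$, which gives the asserted compatibility $\Phi|_{\Pro^s(\vect)}=\Phi_0$.

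That $\Phi$ is well defined on the localized category is immediate: replacing $X$ by $X\cdot\tilde\phi$ for a bicofinal $\phi$ does not change the systems $\{X_j\}$ and $\{V_j\}$ beyond the cofinal reindexing of Lemma \eqref{cofinal}, and since $\Phi_0$ is an equivalence every generating morphism of $U_a$ is sent to an isomorphism. Thus $\Phi$ factors through $\APa[U_a^{-1}]=\limA$ and is a functor.

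For full faithfulness I would compare Hom-sets directly. Writing $\Phi(X)=\varinjlim_j V_j$ and $\Phi(Y)=\varinjlim_l W_l$, the colimit topology gives $\Hom_{\Ll}(\Phi X,\Phi Y)=\varprojlim_j\Hom_{\Ll}(V_j,\varinjlim_l W_l)$. The crucial point is that each $V_j$ is linearly compact, so any continuous linear map $V_j\to\varinjlim_l W_l$ factors through one of the open subspaces $W_l$ (these exhaust the target by open subspaces, and linear compactness of $V_j$ forces the factorization); consequently $\Hom_{\Ll}(V_j,\varinjlim_l W_l)=\varinjlim_l\Hom_{\Ll_0}(V_j,W_l)$. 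Using that $\Phi_0$ is fully faithful on $\Pro^s(\vect)$, and then Theorem \eqref{embedlima}, we obtain $\Hom_{\Ll}(\Phi X,\Phi Y)=\varprojlim_j\varinjlim_l\Hom_{\Pro(\vect)}(X_j,Y_l)=\Hom_{\IP^a(\vect)}(X,Y)=\Hom_{\T}(X,Y)$, so $\Phi$ is fully faithful.

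It remains to prove essential surjectivity, which I expect to be the main obstacle. Given $W\in\Ll$, choose an open linearly compact subspace $V\subset W$; then $W/V$ is discrete, hence an increasing union of finite-dimensional subspaces, while $V=\varprojlim V/U$ over its open finite-codimensional subspaces. Pulling the finite-dimensional filtration of $W/V$ back to $W$ produces an exhausting chain $\cdots\subset V_{-1}\subset V_0=V\subset V_1\subset\cdots$ of open linearly compact subspaces with finite-dimensional successive quotients and $\bigcup_j V_j=W$, while a coinitial chain of open subspaces of $V$ (with zero intersection) supplies, together with the quotients, a bifunctor $X\colon\Pi\to\vect$ via $X_{i,j}:=V_j/V_i$. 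The substantive work is to verify that this $X$ is admissible and that each of its squares is cartesian (equivalently, by Proposition \eqref{cartesian}, cocartesian), so that the data genuinely defines an object of $\kappa^a(\vect)=\T$, and that $\Phi(X)\cong W$ (using $\varprojlim_{i\le j}V_j/V_i\cong V_j$ and $\varinjlim_j V_j=W$). Both points rest on the structure theory of locally linearly compact spaces from \cite{l}, namely openness of $V_j$ in $V_{j'}$, finiteness of the successive quotients, and the compatibility of the projective presentations across the chain. Granting this, $\Phi$ is essentially surjective and hence an equivalence restricting to $\Phi_0$ on $\Pro^s(\vect)$, as claimed.
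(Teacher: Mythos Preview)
Your proof is correct and follows the same approach as the paper: define $\Phi$ by sending $X=(X_{i,j})$ to the actual limit $\varinjlim_j\varprojlim_i X_{i,j}$, check via the cartesian/admissibility conditions that the result is locally linearly compact, and then verify that $\Phi$ is an equivalence. The paper's proof is only a two-sentence sketch that asserts all of this is ``not difficult'' and omits the full-faithfulness computation and the essential-surjectivity construction, so your version is in fact considerably more complete than what appears in the text.
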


\begin{proof}
It is naturally defined a functor $\Phi$ from $\T$ to the category of vector spaces over $k$, which takes the object $\limij V_{ij}$ into 
the space $\limitij\ V_{ij}$, and extended to the morphisms in the obvious way. The cartesian condition valid in $\T$ ensures that 
$\Phi$ takes values in the category $\Ll$, and it is not difficult to prove that $\Phi$ realizes the equivalence $\T\xrar{\sim}\Ll$.
\end{proof}

As a consequence of Proposition \eqref{lefschetz}, $\Ll$ becomes endowed with a structure of an exact category, and it is self-dual 
(see Prop.\eqref{limduality}). We now describe its exact structure in topological terms.

\begin{prop}\label{exactTate}
(a) Under the identification of Proposition \eqref{lefschetz}, the class of admissible monomorphisms of $\Ll$ coincides with the class 
of its closed embeddings. \\
(b) Similarly, the class of admissible epimorphisms in $\Ll$ coincides with the class of continuous surjective morphisms $p:B\rar C$, 
such that the canonical bijection $\dfrac{B}{\ker(p)}\rar C$ is a homeomorphism.
\end{prop}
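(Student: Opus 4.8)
The plan is to transport everything through the equivalence $\Phi\colon\T\xrar{\sim}\Ll$ of Proposition \eqref{lefschetz} and to compute the topology of $\Phi$ applied to a straightified admissible mono or epi. By Corollary \eqref{straight2} and its analogue for the iterated categories (the straightification used in the proof of Theorem \eqref{exact}), an admissible monomorphism $m\colon X\hrar Y$ of $\T=\dlim\vect$ may be represented, componentwise over $\Pi$, by a morphism of admissible bifunctors that is pointwise an admissible monomorphism of $\vect$, i.e. an injection $m_{ij}\colon X_{ij}\hrar Y_{ij}$ of finite-dimensional spaces; dually an admissible epimorphism is pointwise a surjection. Writing $X_j=\varprojlim_i X_{ij}$ and $Y_j=\varprojlim_i Y_{ij}$ for the open linearly compact subspaces exhausting $\Phi(X)=\varinjlim_j X_j$ and $\Phi(Y)=\varinjlim_j Y_j$, the map $\Phi(m)$ is $\varinjlim_j\varprojlim_i m_{ij}$.

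First I would settle the linearly compact case, i.e. the behaviour of $\Phi_0$ on $\Pro^s(\vect)\cong\Ll_0$. Since $\Ll_0$ is abelian and its objects are linearly compact, a pointwise injection $\varprojlim_i m_{ij}\colon X_j\to Y_j$ is a continuous linear injection of a linearly compact space; its image is linearly compact, hence closed in the Hausdorff space $Y_j$, and the map is a homeomorphism onto its image. Thus $X_j\to Y_j$ is a closed embedding, and dually each pointwise surjection $Y_j\to Z_j$ is an open topological quotient of linearly compact spaces (here one uses that the finite-dimensional system is Mittag--Leffler, so $\varprojlim_i$ is exact and preserves surjectivity). This is the base case for both (a) and (b).

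The heart of the argument is to bootstrap from the open exhausting pieces to the whole spaces. For (a) the two points to check are that $\im\Phi(m)$ is closed in $\Phi(Y)$ and that $\Phi(X)$ carries the subspace topology. The key identity is $\im\Phi(m)\cap Y_j=m(X_j)$ for every $j$. To prove it, let $Z=\coker(m)$, which again lies in $\T$, so the ind-structure maps $Z_j\to Z_{j'}$ are the injective open embeddings of a Tate space. Given $x'\in X_{j'}$ with $m_{j'}(x')\in Y_j$, chasing the compatible short exact sequences $0\to X_\bullet\to Y_\bullet\to Z_\bullet\to 0$ and using the injectivity of $Z_j\hrar Z_{j'}$ shows $x'\in\im(X_j\to X_{j'})$; this says exactly that the square with rows $X_j\to X_{j'}$, $Y_j\to Y_{j'}$ and columns $m_j,m_{j'}$ is cartesian, whence the stated identity. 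Since each $Y_j$ is open and closed in $\Phi(Y)$ and $m(X_j)$ is closed in $Y_j$ by the base case, the identity forces $\im\Phi(m)$ to be closed and its subspace topology to agree with the colimit topology of $\Phi(X)=\varinjlim_j X_j$; hence $\Phi(m)$ is a closed embedding. Part (b) follows either by the dual argument, surjectivity being inherited from the base case while the induced continuous bijection $\Phi(Y)/\ker\to\Phi(Z)$ is seen to be open by transporting the openness of $Y_j\to Z_j$ through the open exhaustions, or more economically from the self-duality $(\limA)^o=\dlim(\A^o)$ of Proposition \eqref{limduality}, which for $\vect=\vect^o$ makes $\T$ self-dual and interchanges admissible monos and epis, while the Lefschetz duality on $\Ll$ interchanges closed embeddings and open quotient maps.

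Finally, I would prove the converse inclusions, so that the two classes coincide rather than merely being contained one in the other. Given a closed embedding $f\colon X\hrar Y$ in $\Ll$, the subspace $X$ is locally linearly compact and the quotient $Y/X$, with the quotient topology, is again Hausdorff and locally linearly compact; the resulting topological short exact sequence $0\to X\to Y\to Y/X\to 0$ is carried by $\Phi^{-1}$ to an admissible short exact sequence of $\T$ (using Theorem \eqref{exact} and that $\Phi$ is an equivalence of exact categories), exhibiting $f$ as an admissible monomorphism; dually for open quotient maps and admissible epimorphisms. I expect the main obstacle to be precisely the general non-compact closedness-of-image step in (a): the naive union $\bigcup_j m(X_j)$ need not be closed, and it is only the cartesianness coming from $Z$ being a genuine Tate space, equivalently the injectivity of its ind-transition maps, that rescues the argument. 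This is also the point where Proposition \eqref{cartesian} and the structure theory of \cite{l} are genuinely used.
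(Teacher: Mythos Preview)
Your forward direction for (a) follows the paper's strategy---straightify, settle the linearly compact base case via \cite{l}, then pass to the ind-limit---but you are actually more careful than the paper at the bootstrap step. The paper asserts rather quickly that since each $\beta_i(A'_i)$ is closed in $\varinjlim_i B'_i$, the full image is closed and the map is an embedding; you correctly isolate the real content of that assertion, namely the identity $\im\Phi(m)\cap Y_j = m(X_j)$, and you justify it via the cokernel $Z$ and the injectivity of its ind-transition maps. That is a genuine improvement in rigor, and your identification of this as ``the main obstacle'' is accurate.

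The converse direction, however, has a gap. You write that the topological short exact sequence $0\to X\to Y\to Y/X\to 0$ ``is carried by $\Phi^{-1}$ to an admissible short exact sequence of $\T$ (using Theorem \eqref{exact} and that $\Phi$ is an equivalence of exact categories)''. But the exact structure on $\Ll$ is, by construction, the one transported from $\T$ via $\Phi$; so saying that $\Phi$ is an equivalence of exact categories is tautological and does not tell you that an arbitrary topological short exact sequence in $\Ll$ is admissible. That is precisely the statement you are trying to prove. Theorem \eqref{exact} only says $\T$ is closed under extensions in $\Ind\Pro(\vect)$; it does not by itself show that $\Phi^{-1}$ of your topological sequence is short exact in $\Ind\Pro(\vect)$. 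You would still need to verify this last point, and you have not indicated how. The paper sidesteps the issue entirely: given a closed embedding $\beta\colon V\hrar W$, it explicitly writes both spaces as ind-pro limits over compact open subspaces $U'\subset W$ and their open subspaces $U'_1$, takes $U=U'\cap V$ and $U_1=U'_1\cap V$, and thereby produces a straight system of injections $U/U_1\hrar U'/U'_1$ that visibly represents an admissible monomorphism in $\T$. This constructive route is what you are missing.
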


\begin{proof}
(a). Let $\alpha:A\hrar B$ be an admissible monomorphism in $\T$. Let's prove that $\Phi(\alpha)\colon\Phi(A)\to\Phi(B)$ is a closed embedding.

\vspace{0.1cm}

Let be $A=\limij A_{ij}$, $B=\limij B_{ij}$. Straightify $\alpha$ and get a representation $\alpha=\limij\alpha_{ij}$, where for each 
$i, j$, $\alpha_{ij}:A_{ij}\hrar B_{ij}$ is an embedding of finite dimensional vector spaces. We thus have $\Phi(A)=\limitij\ A_{ij}$, 
$\Phi(B)=\limitij\ B_{ij}$, and $\Phi(\alpha)=\limitij\ \alpha_{ij}$. We write, for all $i$, $\alpha_i=\liminvj\ \alpha_ {ij}$, and 
$\beta_i=\Phi(\alpha_i)=\limitinvj\ \alpha_{ij}$. Let us also write $A'_i=\limitinvj\ A_{ij}$ and $B'_i=\limitinvj\ B_{ij}$. It is therefore 
$\beta_i\colon A'_i\to B'_i$ in $\Ll$, and $\Phi(\alpha)=\limiti\ \beta_i\colon\limiti\ A'_i\to\limiti\ B'_i$.

\vspace{0.2cm}

For all $i$, the map $\beta_i:\limitinvj\ A_{ij}\rar\limitinvj\ B_{ij}$ is an injective map of linearly compact topological vector spaces. 
From Prop. (27.8) of \cite{l}, it follows that $\beta_i$ is an isomorphism onto its image, i.e. an embedding. Next, the space $\beta_i(\limitinvj\ A_{ij})$ is 
linearly compact in $\limitinvj\ B_{ij}$, hence, from (27.5) of \cite{l}, closed in it. Therefore, $\beta_i$ is a closed embedding. The map 
$\limiti\ \beta_i$, from general properties of the functor $\varinjlim$, is thus  an embedding of locally linearly 
compact topological vector spaces. Since this map sends each linearly compact subspace $A_i$ into a closed subspace of 
$\limiti\ B_i$, it follows that it is closed. Then $\Phi(\alpha)$ is a closed embedding.

\vspace{0.2cm}

Conversely, let $\beta:V\hrar W$ be a closed embedding in $\Ll$, with $V=\Phi(A)$ and $W=\Phi(B)$. We can write: 
$V=\varinjlim\ U$ and $W=\varinjlim\ U'$, where the limits are taken over the set of all $U$ compact open, such that $U\subset V$  
in the first limit and the set of all $U'$ compact open such that $U'\subset W$ in the second. On the other hand, for $U$ and $U'$ so 
defined,  we have: $U=\underset{U_1\subset U}\varprojlim\ \dfrac{U}{U_1}$ and 
$U'=\underset{U'_1\subset U'}\varprojlim\ \dfrac{U'}{U'_1}$, and $\dfrac{U}{U_1}$ and $\dfrac{U'}{U'_1}$ are finite dimensional vector 
spaces. Since $\beta$ is a closed embedding, we can take $U=U'\cap V$ and $U_1=U'_1\cap V$. Thus, we can write $\beta$ as a 
limit of finite dimensional vector spaces as follows:
$$
\beta\colon\underset{U}\varinjlim\underset{U_1\subset U}\varprojlim\ \dfrac{U}{U_1}\hrar
\underset{U'}\varinjlim\underset{U'_1\subset U'}\varprojlim\ \dfrac{U'}{U'_1}
$$
such that each component of $\beta$ is a linear embedding of finite dimensional spaces $\dfrac{U}{U_1}\hrar\dfrac{U'}{U'_1}$. This 
component system corresponds, in the category $\T$, to an admissible monomorphism $\alpha:A\hrar B$, for which 
$\Phi(\alpha)=\beta$. Thus, part (a) of the theorem is proved.

\vspace{0.2cm}
The proof of (b) follows by duality from the proof of (a), and the theorem is proved.
\end{proof}

Proposition \eqref{lefschetz} allows us to identify $\T$ and $\Ll$.

\vspace{0.3cm}

The category $\T$ is not abelian. For example, the inclusion $k[t]\hrar k[[t]]$ is a non-admissible monomorphism in $\T$.

{\small Department Of Mathematics\\
Yale University\\
New Haven, CT 06520\\
\tt luigi.previdi@yale.edu}

\end{document}